\numberwithin{equation}{section}
\newtheorem{thmm}{Theorem}
\newtheorem{ass}{Assumption}
\newtheorem{thm}{Theorem}[section]
\newtheorem{lem}[thm]{Lemma}
\newtheorem{cor}[thm]{Corollary}
\newtheorem{prop}[thm]{Proposition}
\newtheorem{rem}[thm]{Remark}
\newcommand\cC{{\mathcal C}}
\newcommand\cD{{\mathcal D}}
\newcommand\cE{{\mathcal E}}
\newcommand\cH{{\mathcal H}}
\newcommand\cO{{\mathcal O}}
\newcommand\cM{{\mathcal M}}
\newcommand\cN{{\mathcal N}}
\newcommand\cZ{{\mathcal Z}}
\newcommand\bE{{\mathbb E}}
\newcommand\bR{{\mathbb R}}
\newcommand\bZ{{\mathbb Z}}
\newcommand\scH{{\mathscr H}}
\newcommand\ve{\varepsilon}
\newcommand\vf{\varphi}
\newcommand{\mcE}{{\mc E\!\!\!\!\mc E\!\!\!\!\mc E}}
\newcommand{\mcEe}{{\mc E\!\!\!\!\mc E\!\!\!\!\mc E}^{\,\ve}}
\newcommand\oU{\overline{U}}
\newcommand\Id{{\mathds{1}}}
\newcommand{\bigtime}{{\sf X}}
\newcommand{\lv}{\llangle}
\newcommand{\rv}{\rrangle}
\newcommand{\mc}[1]{{\mathcal #1}}
\begin{document}

\title[Fourier law]{Toward the Fourier law for a weakly interacting anharmonic crystal}
\author{Carlangelo Liverani}
\address{Carlangelo Liverani\\
Dipartimento di Matematica\\
II Universit\`{a} di Roma (Tor Vergata)\\
Via della Ricerca Scientifica, 00133 Roma, Italy.}
\email{{\tt liverani@mat.uniroma2.it}}
\author{Stefano Olla}
\address{Stefano Olla\\
CEREMADE, UMR CNRS 7534\\
Universit\'e Paris-Dauphine\\
75775 Paris-Cedex 16, France, \emph{and}}
\address{
 INRIA - Universit\'e Paris Est, CERMICS, Projet MICMAC, Ecole des
 Ponts ParisTech\\ 
  6 \& 8 Av. Pascal, 77455 Marne-la-Vall\'ee Cedex 2, France }
\email{{\tt olla@ceremade.dauphine.fr}}
\date{\today.}
\begin{abstract}
For a system of weakly interacting anharmonic oscillators, perturbed
by an energy preserving stochastic dynamics, we prove an autonomous
(stochastic) evolution for the energies at large time scale (with
respect to the coupling parameter). It turn out that this macroscopic
evolution is given by the so called conservative (non-gradient)
Ginzburg-Landau system of stochastic differential equations.
The proof exploits hypocoercivity and hypoellipticity properties of
the uncoupled dynamics.  
\end{abstract}
\thanks{It is a pleasure to thank  Clement Mouhot for many very useful
  discussion. We are also indebted to  Boguslaw Zegarli\'nski for
  helpful suggestions. This paper has been partially supported by the
  European Advanced Grant {\em Macroscopic Laws and Dynamical Systems}
  (MALADY) (ERC AdG 246953), by Agence Nationale de la Recherche, 
  under grant ANR-07-BLAN-2-184264 (LHMSHE) and by MIUR under the grant PRIN 2007B3RBEY} 
\keywords{Weak coupling, scaling limits, hypoellipticity,
  hypocoercivity, Ginzburg-Landau dynamics, heat equation}
\subjclass[2000]{82C70, 60F17, 80A20}
\maketitle


\section{Introduction}
\label{sec:intro}

The problem of deriving heat equation and Fourier's law for the
macroscopic evolution of the energy from a microscopic dynamics of
interacting atoms (hamiltonian or quantum), is one of the major goals
of non-equilibrium statistical mechanics \cite{BLR}. 

Although we are still very far from a rigorous mathematical derivation,
we have now some understanding of the needed ingredients.

It is clear that heat equation is a \emph{macroscopic}
phenomenon, emerging after a diffusive rescaling of space and time. It is
also clear that non-linearities of the microscopic dynamics are
necessary, since in a linear system of interacting oscillators energy may
disperse ballistically and thermal conductivity results infinite
\cite{RLL}. 
Non linearities of the interaction should give enough chaoticity and
time mixing such that \emph{locally} the system, in the macroscopic
time scale, is in a state of \emph{local equilibrium}. This should be
intended in terms of a scale parameter $\ve$: in a region of linear
size $\ve$, at a large time scale $\ve^{-b}t$, the system should be
close to equilibrium with temperature given by the local average of
kinetic energy. This statement of \emph{local equilibrium} should be
intended in the stronger sense that locally the dynamics is close to
an equilibrium dynamics.
Since energy is a conserved quantity, it can only evolve by moving
between different regions of linear size $\ve^{-1}$ through energy
currents. Because of the size of the regions and the fact that in
equilibrium energy currents have null average, one should look at time
of the order $\ve^{-2}t$ in order to see some exchange of energy
between boxes at different temperature. In other words a
central limit theorem for the energy currents is involved, and the
thermal conductivity is then given by the space-time integral of the
current-current  correlation (Green-Kubo formula). This conductivity
will be convergent if the system in equilibrium has enough mixing
properties. 

 To perform the above program, in a mathematical rigorous way, from a
 purely deterministic Hamiltonian dynamics, it is at the moment a too
 difficult challenge.

In the last years some mathematical results have been obtained by
perturbing the dynamics with energy conserving stochastic forces. 
The purpose of this stochastic perturbations is to give the ergodic
and chaotic properties to the system without modifying the
macroscopic behavior of the evolution of the energy.

This strategy has proven successful for systems in the hyperbolic
scaling ($b=1$), when momentum conservation is also preserved by the
stochastic perturbations \cite{ovy}, obtaining Euler system of equations
for compressible gas as macroscopic equation, at least in the smooth
regime.

In the diffusive scaling ($b=2$) this problem is still very
challenging even in presence of the stochastic perturbations. 
The difficulty is essentially involved in the space-time rescaling and
the corresponding central limit theorem.

In this paper we develop a weak-coupling approach to the problem of
energy diffusion that permits to separate the time limit from the
space one. We consider a \emph{finite} system of anharmonic
oscillators, whose hamiltonian dynamics is perturbed by a noise that
conserves the kinetic energy of each oscillators (we consider
oscillators that have at least 2 degree of freedom). The noise could
be though as modeling some chaotic internal degree of freedom of each
atom.

The oscillators are \emph{weakly} coupled with a small parameter
$\ve$. Consequently the exchange of energy between oscillators is
given by the currents associated to the hamiltonian mechanism, but
multiplied by $\ve$.  The noise drives each atom towards
(microcanonical) equilibrium where currents have null average. In a
time scale of order $\ve^{-2}$ current fluctuations are able to move
energy around the lattice if gradients of energy are present between
atoms.

We prove in fact that in the limit as $\ve\to 0$, in the time scale
$\ve^{-2}t$, the energies of the atoms evolve autonomously following
the solution of a system of stochastic differential equations,
conservative of the total energy (cf. (\ref{eq:opla})).  
It turns out that this macroscopic stochastic evolution has already
been considered in the hydrodynamic limit literature and it is called
\emph{non-gradient} Ginzburg-Landau model \cite{Va}. Consequently,
using the techniques developed in \cite{Va}, one could try to prove that
under space-time diffusive rescaling, the energy evolves following a
non-linear heat equation. The results in \cite{Va} do not apply
directly to  (\ref{eq:opla}) due to degeneracy of the coefficients,
but hopefully they can be adapted to the present situation, for the moment we postpone this problem.
We have thus reduced the derivation of the heat equation to a two step
procedure of which this paper rigorously accomplish the first step. 

We should remark here that an extension of the non-gradient technique
of   \cite{Va} directly to our original microscopic stochastic
dynamics would be much more challenging, as this dynamics is very
degenerate. 

The reason of the name non-gradient comes from the fact that the
currents of the macroscopic dynamics are not gradient, i.e. are not a
given by the spacial gradient of a local function of the
configurations of energies. It is interesting to note that in the
purely harmonic case, the macroscopic dynamics (\ref{eq:opla}) become
gradient  (see appendix \ref{app:gauss}). This implies a connection
between the non-gradient property and non-linearity of the
microscopic dynamics. We also notice that in the purely harmonic
case, because the presence of the energy conserving noise, the
microscopic energy current have an exact fluctuation-dissipation
decomposition in a gradient  plus a fast fluctuating term (see formula
(\ref{eq:poisson} in appendix \ref{app:gauss}), that has been already
exploited in \cite{BO} to obtain Fourier's law.

The main strategy of the proof is similar to other averaging
principles (\cite{freidlin, FW, Ki}): at a large time scale the dynamics of
each atom is close to the equilibrium dynamics parametrized by its own
energy. Energies of the atoms are our \emph{slow variables} and
evolves through their currents. But a simple averaging of these
currents (that would occur in a time scale of order $\ve^{-1}$) would
not move any energy, since currents have null averages respect to all
equilibrium measures (microcanonical). This forces us to look at the time
scale $\ve^{-2}$, when the energy evolves due to the
\emph{fluctuations} of the currents in equilibrium. 
Thus we must establish a central limit theorem for the energy
currents in the uncoupled dynamics, i.e. we have to study the Poisson
equation
\begin{equation*}
  L_0 u = j
\end{equation*}
 where $L_0$ is the generator of the uncoupled dynamics, and $j$ is
 the energy current between two particles. In order to prove our
 theorem, we need existence and regularity of the solution $u$ of this
 equation. The generator $L_0$ turns out to be hypoelliptic on each
 microcanonical energy surface, that provides regularity on the
 tangent direction of this surface. Yet, as the energy is exchanged
 from one particle to the other, we also needed to prove regularity in
 non tangential directions.

For the existence of $u$, we prove a spectral gap in a proper Sobolev
space, with an adaptation of hypocoercivity techniques \cite{V1}.
These techniques provide a precise control of this spectral gap with respect to the
energy, a control especially needed at low energies. Such detailed informations are necessary in 
order to perform the closure of the macroscopic equations. 

The content of the paper is as follows.

\tableofcontents

\section{The model}
\label{sec:model}

Let us consider a region $\Lambda \subset \bZ^d$, set $N=|\Lambda|$,
the number of sites in $\Lambda$. At each site we have a
$\nu$-dimensional, $\nu\geq 2$, nonlinear oscillator and we assume that such
oscillators interact weakly via a non-linear potential. Such a
situation is described by the following Hamiltonian in the variables   
$(q_i,p_i)_{i\in\Lambda}\in \bR^{2\nu N}$
\[
H_\ve^\Lambda:=\sum_{i\in\Lambda}\frac
12\|p_i\|^2+\sum_{i\in\Lambda}U(q_i)+\ve\frac 1{2} \sum_{|i-j|=1} V(q_i-q_j),  
\]
where $U, V\in\cC^\infty(\bR^\nu,\bR)$. We use the convention
$\sum_{|i-j|=1} = \sum_{i\in\Lambda} \sum_{\{j\in\Lambda\;:\;|i-j|=1\}}$.

For simplicity we assume that  $U(q)=\oU(|q|^2)$,
$\oU\in\cC^\infty(\bR,\bR)$, $\oU(0)=0$ and $c^{-1}\leq \oU'\leq c$
for some finite positive constant $c$, in particular this implies that
$U$ is radially symmetric and strictly convex.\footnote{The general, non radial, case can be treated exactly in the same way at the only price of a  much messier algebra. On the contrary, the non convex case could hold interesting surprises and hopefully will be investigated in the future.} 
Also, we assume $\|\nabla V(q)\|^2\le c U(q)$ and $V(-q)=V(q)$.

For simplicity of notations, we choose $\nu = 2$. All result stated in
this paper are valid for general $\nu\ge 2$, with slight modifications
of notations.

In addition to the Hamiltonian dynamics, we consider random forces that conserve the single sites
kinetic energies, given by independent diffusions on the spheres $\|p_i\|^2=cost$. In order to
define such diffusions, consider the vector fields
\begin{equation*}
  X_{i} := p_i^1 \partial_{p_i^2} -  p_i^2 \partial_{p_i^1}=:J p_i\cdot 
  \partial_{p_i},
  \qquad J = \begin{pmatrix}0& -1 \\ 1&0\end{pmatrix}
\end{equation*}
and the second order operator
\begin{equation*}
  S = \sum_{i\in\Lambda} X_{i}^2
\end{equation*}

The generator of the process we are interested in is then given by
\begin{equation}\label{eq:generator}
L_{\ve,\Lambda}=: A_\ve + \sigma^2 S 
\end{equation}
where $A_\ve = \{H_\ve^\Lambda,\cdot\}$,  is the usual Hamiltonian operator and $\sigma>0$ measures the strength of the noise. Clearly, $L_{\ve,\Lambda}$ is the generator of a contraction semigroup $P_\ve^t$ in $L^2(\bR^{2N\nu},m_\ve)$ with stationary measure $m_\ve$ and $P_\ve^tH_\ve^\Lambda=H_\ve^\Lambda$, for all $t\in\bR_+$. Next, we must specify the initial conditions.

The  Gibbs measures at temperature $\beta^{-1}$ are defined by
 $$
m^\beta_\ve(dq,dq)=Z_\ve(\beta) e^{-\beta H_\ve^\Lambda(q,p)}dpdq
$$
and are the stationary (equilibrium) probability measures for the dynamics (the canonical ensemble). 
As reference measure we pick the one corresponding to $\beta =1$ and
we denote it by $m_\ve$. Notice that for $\ve$ small enough, $m_\ve$
and the product measure $m_0$ are equivalent.  
To simplify notations, we also assume that $U$ is such that 
$$
Z_0(1) = 1 =\int_{\bR^4} e^{-(p^2/2 + U(q))}dp dq.
$$

We assume that the system is started in an initial distribution 
$d\nu_0:= F_\ve dm_\ve = F_0 dm_0$.
\begin{ass}\label{ass:zero}
We assume that $F_0\in L^2(\bR^{4N},m_0)$.
\end{ass}
For each $T>0$, the Markov process just described defines a
probability on $\Omega=\cC^0([0,T], \bR^{2\nu |\Lambda|})$. We will
use $\omega_t=(q(t),p(t))$ to designate the elements of $\Omega$ at
time $t$.

\begin{rem}
In the following we will suppress the subscripts and superscripts
$\Lambda$, when this does not create confusion. 
\end{rem}
\begin{rem}
Here we have free boundary conditions. It should be possible to treat
more general stochastic boundary conditions (e.g. having the particle
at the boundary perform an Ornstein-Uhlenbeck process at a given
temperature) by the same method, we avoid such a generalization to
simplify the presentation.  
\end{rem}

\section{The results}\label{sec:res}
The single particles energies are 
$$
\mc E_i^\ve(q,p) = \frac
12\|p_i\|^2+U(q_i)+\frac 1{2}\ve\sum_{|i-j|=1}V(q_i-q_j) .
$$
The time evolution of these energies is given by:\begin{equation}\label{eq:energy}
\frac{d \mc E_i^\ve}{dt}=\ve\sum_{|i-k|=1}j_{i,k}\\
\end{equation}
where the energy currents are defined by
\begin{equation}\label{eq:j}
  j_{i,k} = \frac 1{2}\nabla V(q_i-q_k)\cdot (p_i+p_k). 
\end{equation}
Note that $j_{i,k}=-j_{k,i}$ and that they are functions of the
$q_i, p_i, q_k, p_k$ only.

If $\ve = 0$ the dynamics is given by non-interacting oscillators, and consequently the energy of each oscillator is a conserved quantity.
So for $\ve =0$ there is a family of equilibrium measure parametrized by the vector
 $\underline a = (a_i)_{i\in\Lambda}$ of the energy of each oscillator.
This is given by 
$\mu_{\underline a}^\Lambda$, the microcanonical measure associated to
the Hamiltonian flow $H_0^{\Lambda}$ on the surface 
\begin{equation}\label{eq:energy-surf}
\Sigma_{\underline a}:=\left\{q,p\;:\; a_i = \mc E_i^0(q,p) = \frac
12\|p_i\|^2+U(q_i)\right\} = \bigtime_{i\in\Lambda} \Sigma_{a_i} .
\end{equation}
Clearly, letting $\mu_a$ be the microcanonical
measure on the $3$ dimensional surface $\Sigma_a$, we
have $\mu_{\underline a}^\Lambda=\otimes_{i\in\Lambda}\mu_{a_i}$. 
By the symmetry between $p$ and $-p$ it follows that
$\mu_{\underline a}(j_{i,k})=0$ for each $\underline a$.

 We are interested in the random variables determined the 
time rescaled energies 
\begin{equation}
{\mc E\!\!\!\!\mc E\!\!\!\!\mc E}_i^{\,\ve}(t)=  
\mc E_i^\ve (q(\ve^{-2}t),p(\ve^{-2}t)).\label{eq:process}
\end{equation}

In order to define the parameters of the mesoscopic evolution,
consider the dynamics of 2 non-interacting oscillators ($\ve = 0$),
each starting with the microcanonical distribution with corresponding energy $a_1$ and $a_2$. Let us denote by $ \mathbb E_{a_1, a_2}(\cdot)$ the corresponding expectation in this equilibrium measure.
We will show that the following function on $\bR_+^2$
\begin{equation}
  \label{eq:13}
  \gamma^2(a_1, a_2) = \int_0^\infty \mathbb E_{a_1, a_2}
  \left(j_{1,2}(t) j_{1,2}(0) \right) \; dt,
\end{equation}
is well defined.  More, in Lemma \ref{lem:regular-ag} we prove that 
\begin{equation}
  \label{eq:1}
  \gamma^2(a_1, a_2) = a_1 a_2 G (a_1, a_2)
\end{equation}
where $G$ is a positive symmetric smooth function.
Correspondingly we define the mesoscopic current by the
antisymmetric function 
\begin{equation}\label{eq:ag}
\alpha(a_1,a_2) =  e^{\mathcal U(\underline a)} 
    (\partial_{a_1}- \partial_{a_2} )  
   \left( e^{- \mathcal U(\underline a)} \gamma^2(a_1,a_2)\right). 
\end{equation}
where 
$\mathcal U(\underline a)  = -\sum_j \log \mathcal Z(a_j) $, and 
$\mathcal Z(a)$ is the energy density distribution under $m_0$, \
that behaves like $a$ for small $a$. 

Here is our main result:
 
\begin{thmm}\label{thm:main}
In the limit $\ve\to 0$, the law of $\{\mcEe_i\}_{i\in\Lambda}$ converges to the weak solution of the stochastic differential equations
\begin{equation}\label{eq:opla}
d\mcE_i=\sum_{k:|i-k|=1}\alpha(\mcE_i,\mcE_k) dt+
\sum_{k:|i-k|=1}\gamma(\mcE_i,\mcE_k) dB_{\{i,k\}} 
\end{equation}
with $B_{\{i,k\}} = - B_{\{k,i\}}$ independent standard Brownian
motions. Where the law of $\mcE_i(0)$ is given by the marginal of $F_\ve dm_\ve$ on the $\cE^\ve$.
\end{thmm}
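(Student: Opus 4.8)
The plan is to prove convergence in law of the process $\{\mcEe_i\}_{i\in\Lambda}$ via the standard martingale problem / tightness route adapted to a singular averaging setting. First I would write, for a smooth test function $\phi$, the Dynkin formula for $\phi(\mcEe(t))$ under the generator $\ve^{-2}L_{\ve,\Lambda}$. Using \eqref{eq:energy} the drift term becomes $\ve^{-1}\sum_i \partial_i\phi \sum_{k}j_{i,k}$, which is $O(\ve^{-1})$ and does not close: the currents have zero average with respect to every microcanonical measure, so a naive law of large numbers gives nothing at order $\ve^{-1}$. The resolution, as announced in the introduction, is to invoke the Poisson equation $L_0 u_{i,k} = j_{i,k}$ on each energy surface and remove the fast oscillation. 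Concretely I would set, for each bond $\{i,k\}$, a corrector $w_{i,k}$ solving $L_0 w_{i,k} = j_{i,k}$ in the appropriate weighted Sobolev space, whose existence, smoothness, and quantitative bounds (especially the behavior $\sim a_i a_k$ near the boundary of $\bR_+^2$) come from the spectral-gap/hypocoercivity and hypoellipticity results developed earlier, and from Lemma~\ref{lem:regular-ag}. Then $\phi(\mcEe(t)) + \ve\sum_{i,k}\partial_i\phi(\mcEe(t))\, w_{i,k}(\ve^{-2}t)$ has a drift that is genuinely $O(1)$.

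**Next** I would carry out the second-order expansion of the corrected functional. Applying $L_{\ve,\Lambda}$ to the corrector term produces: (i) a term $\ve^{-1}L_0 w_{i,k} = \ve^{-1}j_{i,k}$ that exactly cancels the bad drift; (ii) a term $\partial_i\phi\, A_1 w_{i,k}$ of order $O(1)$ involving the interaction part $A_1 = A_\ve/\ve$; and (iii) second-derivative terms $\partial_i\partial_{i'}\phi\, (\ve \text{-stuff})$ and the carré-du-champ contribution. Averaging the $O(1)$ pieces against the instantaneous microcanonical measure $\mu_{\mcEe(t)}$ — which is legitimate because the single-site dynamics equilibrates on a time scale much shorter than $\ve^{-2}$, again by the spectral gap — I expect to recover precisely the generator of \eqref{eq:opla}: the drift $\alpha$ of \eqref{eq:ag} emerges from $\mu_{\underline a}(j_{i,k}\, w_{i,k})$ together with the $e^{\mathcal U}$-weight coming from the density of $\mu_{\underline a}$ relative to Lebesgue, while $\gamma^2$ of \eqref{eq:13} is identified as the Green--Kubo integral $\int_0^\infty\bE_{a_i,a_k}(j_{i,k}(t)j_{i,k}(0))\,dt = \mu_{\underline a}(j_{i,k}(-L_0)^{-1}j_{i,k})$, i.e. the carré-du-champ of the corrector. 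The cross terms between different bonds vanish by the $p\to -p$ symmetry and locality of $j_{i,k}$.

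**For the rigorous implementation** I would (a) establish tightness of the laws of $\mcEe$ in $C^0([0,T],\bR_+^N)$ using the uniform-in-$\ve$ moment bounds on energies (energy is conserved, initial data in $L^2$ by Assumption~\ref{ass:zero}) plus Aldous/Kolmogorov criteria applied to the corrected martingale, noting the corrector term is $O(\ve)$ and hence uniformly negligible; (b) pass to the limit along subsequences, showing any limit point solves the martingale problem for the generator of \eqref{eq:opla}, with the quadratic variation of the martingale part converging to $\sum_k \gamma^2(\mcE_i,\mcE_k)\,dt$ and the off-diagonal covariance structure matching $B_{\{i,k\}}=-B_{\{k,i\}}$; (c) invoke well-posedness (uniqueness in law) of \eqref{eq:opla} — here the degeneracy of $\gamma^2$ and $\alpha$ at the boundary $a_i=0$ requires care, but the factorization \eqref{eq:1} and smoothness of $G$ keep the solution in $\bR_+^N$ and the martingale problem well-posed. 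Finally the identification of the initial law is immediate since $\mcEe_i(0)=\mc E_i^\ve(q(0),p(0))$ and the initial distribution is $F_\ve dm_\ve$.

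**The main obstacle** is step (ii)--(iii): controlling the error terms uniformly in $\ve$ down to arbitrarily small energies. The corrector $w_{i,k}$ and its derivatives blow up, and the naive bounds degrade near $\{a_i a_k = 0\}$; one must show that the set of small energies is visited rarely enough — via the quantitative low-energy spectral-gap estimates flagged in the introduction — that the time-integrated errors still vanish. Equally delicate is justifying the replacement of time averages of $A_1 w_{i,k}$ and of the carré-du-champ by their microcanonical expectations: this is a Boltzmann--Gibbs / local-equilibrium argument that here must be done with explicit $\ve$-dependent rates rather than abstractly, again leaning on the hypocoercive spectral gap with its controlled energy dependence.
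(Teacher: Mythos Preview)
Your strategy---tightness plus identification via a Poisson corrector $w_{i,k}=L_0^{-1}j_{i,k}$---is exactly the paper's, but you misjudge where the work lies in two places.

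First, the averaging step is far simpler than you anticipate. The paper's Lemma~\ref{lem:average} needs no $\ve$-dependent rate from hypocoercivity: starting from equilibrium $m_\ve$, Jensen and stationarity reduce $\bE_{m_\ve}\big|\ve^2\int_0^{\ve^{-2}t}f\big|^2\to 0$ for any $f$ with $\mu_{\underline a}(f)=0$ to the statement $\lim_{T\to\infty}\lim_{\ve\to 0}\bE_{m_\ve}\big|\tfrac1T\int_0^T f\big|^2=0$, which is bare ergodicity of the uncoupled dynamics; non-equilibrium follows by Schwarz and Assumption~\ref{ass:zero}. The hypocoercive gap is used only for \emph{existence and regularity} of $u_{i,k}$, never as a quantitative mixing rate in the limit argument.

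Second, your ``main obstacle'' (corrector blow-up at small energies, controlled by rare-visit estimates) is not how the paper proceeds. The key observation of Section~\ref{sec:regular} is that after the coordinate change $\Psi$ to $(r,\xi,\eta)$, the equation $\tilde L_0\tilde u_{i,k}=\tilde j_{i,k}$ extends smoothly through $r=0$, and Lemma~\ref{lem:diff-global} gives $\tilde u_{i,k}\in\cC^\infty(M^\Lambda)$. So there is no low-energy singularity to dodge; one only needs $L_*u_{i,k}\in L^2_{\text{loc}}$ (Lemma~\ref{lem:l2loc}). The energy cutoff $\xi_K$ in the paper is at \emph{high} energy and is removed by Gibbs-tail estimates. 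Two further implementation differences: tightness is obtained not through the corrector but via the identity $Sj_{i,k}=-j_{i,k}$, which writes $j_{i,k}=-\tfrac1{2\sigma^2}(L_\ve+L_\ve^*)j_{i,k}$ and yields a forward/backward martingale decomposition directly; and the vanishing of cross-bond contributions (in $\mu_{\underline a}(L_*u_{i,k})$ when a third site is involved, and in the covariations when two bonds share a vertex) is not just ``$p\to -p$ and locality'' but requires Lemma~\ref{lem:zero}, which computes the partial average $\int u_{i,k}\,d\mu_{a_k}$ explicitly and shows it is $p$-even.
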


Notice that the generator of the diffusion \eqref{eq:opla} on $\mathbb R_+^{\Lambda}$ is given by
\begin{equation}
  \label{eq:genene}
  \mathcal L = \sum_{|k-i|=1} \left(\gamma(\mcE_i,\mcE_k)^2 (\partial_{\mcE_i}- \partial_{\mcE_k} )^2 +
    \alpha(\mcE_i, \mcE_k) (\partial_{\mcE_i}- \partial_{\mcE_k} )
  \right) .
\end{equation}
Since $\gamma^2$ and $\alpha$ are smooth function on $\mathbb R_+^2$,
the uniqueness of the weak solution of \eqref{eq:opla} follows by applying the results in \cite{cerrai}.

Observe that there is a family of product probability measures
\begin{equation}
  \label{eq:flat}
  \prod_{i\in\Lambda} \mathcal Z(a_i) e^{-\beta a_i} N(\beta)^{-1} da_i =
  \prod_{i\in\Lambda} e^{- (\beta a_i + \mathcal U(a_i))} 
  N(\beta)^{-1} da_i, \qquad \beta >0 
\end{equation}
that are stationary and reversible for the diffusion generated by (\ref{eq:genene}).

As we will see shortly the proof of Theorem \ref{thm:main} relays
heavily on the fact that the unperturbed microscopic dynamics
$P_{\Lambda}^t$ generated by $L_{0,\Lambda}$ has strong mixing
properties ({\em hypocoercivity}). This is itself a non trivial result which we believe worth stating separately. Let $\cH_{\underline a}^1$ be the Sobolev space of order one on $\Sigma_{\underline a}$ with respect to the microcanonical measure $\mu_{\underline a}$ and a properly rescaled Riemannian structure (see Section \ref{sec:hypo} for more details). 

\begin{thmm}\label{thm:gap}
 For each set $\Lambda\subset \bZ^d$, there exists $C,\tau>0$ such that, for each energies $\underline a\in(0,\infty)^{\Lambda}$ and $\sigma\in(0,1)$,  the following holds true
\begin{itemize}
\item The semigroup $P^t_{0,\Lambda}$ is contractive in $L^2(\Sigma_{\underline a},\mu_{\underline a})$.
\item  For each smooth function $f\in \cH^1_{\underline a}$ such that $\mu_{\underline a}(f)=0$, holds 
\[
\| P^t_{0,\Lambda}f\|_{\cH^1_{\underline a}}\leq C e^{-\tau \sigma^2 t}\|f\|_{\cH^1_{\underline a}}.
\]
\end{itemize}
\end{thmm}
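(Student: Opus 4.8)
The plan is to reduce to a single oscillator, prove the $\cH^1$ decay on one microcanonical shell by a hypocoercivity argument in the spirit of \cite{V1}, and then tensorize over $\Lambda$. Since for $\ve=0$ the generator splits as $L_{0,\Lambda}=A_0+\sigma^2 S$ with $A_0=\sum_{i}A_0^{(i)}$ the Liouville vector field of $H_0^\Lambda$ (antisymmetric on $L^2(\mu_{\underline a})$) and $S=\sum_i X_i^2$ symmetric and nonpositive, one has $\mathrm{Re}\,\langle L_{0,\Lambda}f,f\rangle_{L^2(\mu_{\underline a})}=\sigma^2\langle Sf,f\rangle\le0$, which is the first bullet. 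Moreover the dynamics factorizes over sites, $P^t_{0,\Lambda}=\bigotimes_{i\in\Lambda}P^t_0$ and $\mu_{\underline a}=\bigotimes_i\mu_{a_i}$, so for the second bullet it suffices to prove the single-site estimate $\|P^t_0f\|_{\cH^1_a}\le C_0e^{-\tau\sigma^2t}\|f\|_{\cH^1_a}$ with $C_0,\tau$ independent of $a$ and $\sigma$, and then to decompose a general $f\in\cH^1_{\underline a}$ along the orthogonal (Hoeffding/ANOVA) decomposition of $L^2(\mu_{\underline a})$ into components depending on increasingly many coordinates, estimating each; the final constant $C$ then depends on $|\Lambda|$, which is harmless as $\Lambda$ is fixed.

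On a single shell $\Sigma_a\subset\bR^4$, introduce the moving frame $(X,Y,Z)$ with $Y:=A_0|_{\Sigma_a}$ and $Z:=[A_0,X]=p^2\partial_{q^1}-p^1\partial_{q^2}+(\partial_{q^2}U)\,\partial_{p^1}-(\partial_{q^1}U)\,\partial_{p^2}$ (up to the sign convention for the bracket). One checks that $\{X,Y,Z\}$ spans $T\Sigma_a$ off the lower-dimensional turning locus $\{p=0\}$, where the additional bracket $[A_0,Z]$ restores the missing tangent direction: this is the hypoellipticity of $L_0$ on $\Sigma_a$. The ``properly rescaled Riemannian structure'' of the statement is precisely the metric that renders this frame uniformly non-degenerate in $a$, with particular care as $a\to0$ (using $\mc Z(a)\sim a$) and as $a\to\infty$. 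The $L^2$ ingredient of the decay is then the standard hypocoercivity mechanism: microscopic coercivity is the Poincaré inequality on the noise fibers $\{\|p\|=\mathrm{const}\}\subset\Sigma_a$, which are circles on which $X$ generates unit-speed rotation, giving $-\sigma^2\langle X^2f,f\rangle\ge c\,\sigma^2\|(I-\Pi)f\|^2$ with $\Pi$ the conditional expectation onto $X$-invariant functions and $c$ universal; the commutator $Z=[A_0,X]$ then controls the remaining directions on the $X$-invariant, $\mu_a$-mean-zero functions, and combining the two as in \cite{V1} yields $\|P^t_0f\|_{L^2(\mu_a)}\le Ce^{-\tau\sigma^2t}\|f\|_{L^2(\mu_a)}$.

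The core is the $\cH^1$ estimate, for which I would run the $H^1$-hypocoercivity scheme of \cite{V1}: build a modified norm
\[
\mathcal N(f)^2=\|f\|^2+\theta_1\|Xf\|^2+\theta_2\|Yf\|^2+\theta_3\|Zf\|^2+2\kappa\,\langle Xf,Zf\rangle,
\]
with norms and inner products in $L^2(\mu_a)$ and small coefficients $\theta_1,\theta_2,\theta_3,\kappa>0$ chosen as a suitable hierarchy of powers of $\sigma$, so that $\mathcal N$ is equivalent to $\|\cdot\|_{\cH^1_a}$ uniformly in $a$ and $\sigma$. Differentiating $\mathcal N(P^t_0f)^2$ along the flow produces, besides the manifestly dissipative terms $-\sigma^2\|X^2f\|^2$, $-\sigma^2\theta_j\|X(\cdot)f\|^2$ and the mixing term from $\langle[L_0,X]f,Zf\rangle$, a collection of error terms governed by the structure constants of the frame, i.e. by $[L_0,X]$, $[L_0,Y]$, $[L_0,Z]$ re-expanded in the basis $(X,Y,Z)$. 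The key structural fact is that these commutators close up with coefficients that are smooth functions on $\Sigma_a$ bounded uniformly in $a$ once the metric is rescaled correctly, so that for an appropriate choice of $\theta_j,\kappa$ the errors are dominated by the dissipative and mixing terms; this gives $\mathcal N(P^t_0f)^2\le e^{-2\tau\sigma^2t}\mathcal N(f)^2$ for mean-zero $f$, hence the theorem.

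I expect the quantitative geometry, not the abstract scheme, to be the main obstacle. One must design the rescaled metric on $\Sigma_a$ so that the frame $(X,Y,Z)$, the fiberwise Poincaré constant, and all the structure constants are controlled \emph{uniformly for $a\in(0,\infty)$} — the low-energy regime $a\to0$, where $\Sigma_a$ collapses and where (as stressed in the Introduction) precise control is needed for the later closure of the macroscopic equations, and the high-energy regime being the delicate ones — while simultaneously handling the turning locus $\{p=0\}$, where the frame degenerates and Hörmander's condition requires an extra bracket; this will probably force a localization near $\{p=0\}$ with suitably weighted norms. Tracking the exact powers of $\sigma$ throughout the modified-norm computation, so that $C$ and $\tau$ emerge independent of $\sigma\in(0,1)$, is the other bookkeeping-heavy point.
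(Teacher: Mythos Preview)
Your overall strategy---build a modified $H^1$-type norm with cross terms so that $\lv f,L_0f\rv\le-\tau\sigma^2\lv f,f\rv$ on mean-zero functions, then tensorize---is exactly the paper's approach. The substantive difference is in the frame and hence in the structure of the modified norm.

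You work with the three fields $(X,Y,Z)=(C_0,B,C_1)$ and a single cross term $\langle Xf,Zf\rangle$. As you yourself notice, this frame degenerates on $\{p=0\}$: there $X=0$ and $Y,Z$ both lie in the $p$-directions, so the $q$-direction tangent to $\Sigma_a$ is missing. Your proposed fix is a localization near $\{p=0\}$. The paper instead goes one step further in the commutator hierarchy: it defines $C_2$ via $[C_1,B]=2C_2+\zeta(q)C_0$ (so $C_2=2\oU'(q)Jq\cdot\partial_q$, which is precisely the missing direction at $p=0$) and works with the four operators $(C_0,C_1,C_2,B)$. The modified form then carries \emph{two} off-diagonal blocks,
\[
\lv h,g\rv=\langle h,g\rangle+\sum_{k=0}^2 a_k\langle C_kh,C_kg\rangle+a_3\langle Bh,Bg\rangle
-b_0\big(\langle C_0h,C_1g\rangle+\langle C_1h,C_0g\rangle\big)
-b_1\big(\langle C_1h,C_2g\rangle+\langle C_2h,C_1g\rangle\big),
\]
and the second cross term $b_1\langle C_1h,C_2g\rangle$ is exactly what produces dissipation in the $C_2$ direction (the computation is the term $\mathbf{II}_{B,1}$ in the paper's Appendix~B). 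This closes globally with no localization; the price is that $[C_2,B]=R_3$ is not in the algebra generated by $C_0,B$ alone, so one must check by hand that $R_3$ is bounded by $(B,C_0,C_1,C_2)$ with coefficients uniform in $a$---the paper does this in \eqref{eq:r3-bound}. With this extra layer the parameter hierarchy becomes $a_0,a_1,a_2,a_3,b_0,b_1$ (six instead of your four), chosen as explicit powers of a small $\upsilon$ so that everything is uniform in $\sigma\in(0,1)$.

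So your one-bracket scheme with localization is not wrong in principle, but it is not what the paper does, and making the localized estimates uniform as $a\to0$ would likely be at least as painful as the two-bracket bookkeeping. If you want to match the paper, drop the localization idea, add $C_2$ and the second cross term, and verify the three structural bounds: $R_2=\zeta C_0$ with $\zeta$ bounded, $R_3$ bounded by the frame, and $[C_1,C_2]$ bounded by the frame. The uniformity in $a$ then comes for free from the coordinate change $\Psi$ of Section~\ref{sec:coordinate}, which maps every $\Sigma_a$ to a fixed sphere with coefficients depending smoothly on $r=\sqrt a$; this is also how the paper handles the Poincar\'e inequality (Lemma~\ref{lem:poinc}) with a constant independent of $\underline a$.

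Two smaller remarks. First, the paper does \emph{not} prove $L^2$ exponential decay; only $L^2$ contractivity plus $\cH^1$ exponential decay. Your DMS-style $L^2$ argument may well work but is an addition, not a step toward the $\cH^1$ bound. Second, the tensorization in the paper is not via an ANOVA/Hoeffding decomposition: it sums the site-wise forms $\lv\cdot,\cdot\rv_{\underline a,l}$, uses that the off-site contribution of $L_{\{l'\}}$ to $\lv\cdot,\cdot\rv_{\underline a,l}$ has a sign (it reduces to $-\|C_{0,l'}h\|^2$), and then applies a product Poincar\'e inequality to convert $\sum_l\cD_{\underline a,l}(h,h)$ into control of $\|h\|^2$. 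Your ANOVA route should also work, but the direct sum-of-forms argument is shorter.
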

The proof is given in section \ref{sec:hypo}.

Before discussing the proof of the above results let us indulge in several remarks.
\begin{rem}
By \eqref{eq:ag} we can rewrite the generator as
\begin{equation}\label{eq:gene}
   \mathcal L =  \sum_{|k-i|=1}
     e^{ \mathcal U(\underline a)} 
     (\partial_{\mcE_i}- \partial_{\mcE_k} )
    e^{- \mathcal U(\underline a)}
    \gamma^2(\mcE_i,\mcE_k) (\partial_{\mcE_i}- \partial_{\mcE_k} )
\end{equation}
\end{rem}

\begin{rem}
The process \eqref{eq:opla} is close the the one studied by Varadhan in \cite{Va}, yet it is not covered by such results (due to the degeneracy at zero of the diffusion coefficients and the non strict convexity of the potential of the invariant measure).
In any case, the extension of Varadhan's work to the present case
would allow to obtain the {\em heat equation} in the present setting
via a diffusive scaling limit of space and time (\emph{hydrodynamic limit}).
\end{rem}

\begin{rem}
Note that both $\gamma$ and $\alpha$ depend on $\sigma$. One can wonder if equation \eqref{eq:opla} does admit a limit for small noise. Indeed, if $U$ and $V$ are quadratic then both $\gamma^2$ and $\alpha$ are proportional to $\sigma^{-2}$, see \eqref{eq:2bis}.  Hence the energy exchange for small noise is faster than the time scale we are exploring. This is due to the fact that in the quadratic case the solutions are quasiperiodic. On the other hand, for each $U, V$ a positive measure of such quasiperiodic solutions will survive by KAM theorem at least for small energies so it may be possible that a small noise limit exists, upon rescaling time, as in the quadratic case. The present results allow only upper bounds (which agree with the quadratic case), but it is unclear if a sufficiently exact scaling still exists.
\end{rem}

\section{Proof of Main theorem}\label{sec:proof}
This section is devoted to proving Theorem \ref{thm:main} by using several results detailed in the later sections (more precisely we assume Proposition \ref{lem:gap} and Lemmata \ref{lem:diff-global},  \ref{lem:l2loc}, \ref{lem:alphagamma} and \ref{lem:regular-ag}).
Our strategy is  the first establish tightness and then to show that the accumulation points satisfy \eqref{eq:opla}. Since \eqref{eq:opla} has a unique solution, the process have a unique accumulation point whereby proving the existence of the limit.

\subsection{Tightness}
Here we start studying the processes $\{\mcEe_i(t)\}$ defined in \eqref{eq:process}. 
\begin{lem}\label{lem:tight} There exists $\ve_0>0$ such that for each $T>0$, the processes $\{\mcEe_i(t)\}_{t\leq T}$, $0\leq \ve\leq \ve_0$, are tight.
\end{lem}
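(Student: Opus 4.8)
The plan is to prove tightness of $\{\mcEe_i(t)\}_{t\le T}$ via the Aldous--Rebolledo (or Kurtz) criterion, using the semimartingale decomposition of $\mcEe_i(t)$ that comes from applying the generator $L_{\ve,\Lambda}$ after the time rescaling $t\mapsto \ve^{-2}t$. First I would write, for each $i$, the Dynkin decomposition
\begin{equation*}
\mcEe_i(t) = \mcEe_i(0) + \int_0^t \ve^{-2}\, L_{\ve,\Lambda}\mc E_i^\ve\,(q(\ve^{-2}s),p(\ve^{-2}s))\, ds + M_i^\ve(t),
\end{equation*}
with $M_i^\ve$ a martingale. Using \eqref{eq:energy} one has $L_{\ve,\Lambda}\mc E_i^\ve = \ve\sum_{|i-k|=1} j_{i,k}$ (the noise $S$ kills $\mc E_i^\ve$ since it preserves each kinetic energy, and the current is the Hamiltonian part), so the drift term is $\ve^{-1}\int_0^t \sum_k j_{i,k}(\ve^{-2}s)\,ds$. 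This naively diverges like $\ve^{-1}$, and the whole point of the time-scale choice is that the currents have zero microcanonical average, so one expects this term to be order one. The uniform (in $\ve$) control of this term is the crux and is supplied by the Poisson-equation machinery of the later sections: writing $j_{i,k}=L_0 u_{i,k}$ (Proposition \ref{lem:gap} gives the spectral gap, hence the solvability and the needed bounds on $u_{i,k}$), one replaces the bad integral by boundary terms $\ve\big(u_{i,k}(\ve^{-2}t)-u_{i,k}(0)\big)$ plus a martingale plus an error coming from $A_\ve - A_0 = \ve\,\{\tfrac12\sum V(q_i-q_j),\cdot\}$, which contributes an $O(\ve)$ correction after the same integration-by-parts — this is exactly the content of Lemma \ref{lem:diff-global}. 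The upshot is a decomposition $\mcEe_i(t) = \mcEe_i(0) + b_i^\ve(t) + N_i^\ve(t)$ where $b_i^\ve$ has bounded variation with $L^2$-bounded increments and $N_i^\ve$ is a martingale whose bracket $\langle N_i^\ve\rangle_t$ is, after rescaling, $\ve^{-4}\cdot \ve^2 \int_0^t \Gamma(\mc E^\ve_i)\,ds$-type quantities controlled again through the $u_{i,k}$'s (Lemma \ref{lem:l2loc} should give the relevant $L^2$ bounds, locally in energy).

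With this decomposition in hand, tightness follows from two ingredients. First, compact containment: one must show $\sup_{0\le\ve\le\ve_0}\bE\big(\sup_{t\le T}\mcEe_i(t)\big)<\infty$, or at least that for every $\eta$ there is a compact $K\subset\bR_+^\Lambda$ with $\bP(\mcEe(t)\notin K)<\eta$ uniformly in $t\le T$ and $\ve$. Here I would use that total energy $H_\ve^\Lambda$ is exactly conserved by the dynamics (stated in the excerpt: $P_\ve^t H_\ve^\Lambda = H_\ve^\Lambda$), so $\sum_i \mc E_i^\ve = H_\ve^\Lambda$ is constant along trajectories; combined with Assumption \ref{ass:zero} ($F_0\in L^2(m_0)$) and the equivalence of $m_\ve$ and $m_0$ for small $\ve$, this bounds $\bE\big(\sum_i \mcEe_i(t)\big) = \bE(H_\ve^\Lambda)$ uniformly, and since each $\mcEe_i\ge 0$ (up to the $O(\ve)$ interaction term, which is controlled by $\|\nabla V\|^2\le cU$) this gives the needed tail bound via Markov. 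Second, the Aldous condition: for stopping times $\tau\le\tau'\le (\tau+\delta)\wedge T$ one needs $\bE\big(|\mcEe_i(\tau')-\mcEe_i(\tau)|\wedge 1\big)\to 0$ as $\delta\to 0$ uniformly in $\ve$; for the bounded-variation part $b_i^\ve$ this follows from $\bE|b_i^\ve(\tau')-b_i^\ve(\tau)|\le C\delta$ plus the $O(\ve)$ boundary terms $\ve\,\bE|u_{i,k}(\ve^{-2}\tau')-u_{i,k}(\ve^{-2}\tau)|\le C\ve$ (using the bounds on $u$), and for the martingale part from $\bE\big(N_i^\ve(\tau')-N_i^\ve(\tau)\big)^2 = \bE\big(\langle N_i^\ve\rangle_{\tau'}-\langle N_i^\ve\rangle_\tau\big)\le C\delta$.

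The main obstacle I anticipate is making the ``zero-average current'' cancellation quantitative and uniform down to zero energy: the Poisson solution $u_{i,k}$ exists by hypocoercivity but its $L^2$ and higher norms, and those of its gradients, must be controlled by the local energies in a way that survives integration along the trajectory and the small-energy degeneracy (where the spectral gap $\tau\sigma^2$ interacts with the vanishing of $\mc Z(a)\sim a$). This is precisely why the paper isolates Proposition \ref{lem:gap} and Lemmata \ref{lem:diff-global}, \ref{lem:l2loc}, \ref{lem:alphagamma}, \ref{lem:regular-ag}; for the purposes of \emph{this} lemma I would invoke them as black boxes, extracting only: (i) $\bE_{\nu_0}\big(u_{i,k}(q(s),p(s))^2\big)$ and $\bE_{\nu_0}\big(\Gamma_S(u_{i,k})(q(s),p(s))\big)$ are bounded uniformly in $s\le \ve^{-2}T$ and $\ve\le\ve_0$ (relative entropy / $L^2$ comparison of $\nu_0$ with $m_\ve$ plus the a priori energy bound), and (ii) the resulting drift and bracket increments are $O(\delta)$ as above. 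Assembling (i)--(ii) with compact containment through Aldous--Rebolledo yields tightness of each $\{\mcEe_i\}$, hence of the finite vector $\{\mcEe_i\}_{i\in\Lambda}$.
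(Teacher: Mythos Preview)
Your approach is viable in outline but takes a substantially heavier route than the paper. You propose to control the divergent drift $\ve^{-1}\int j_{i,k}\,ds$ by solving the Poisson equation $L_0 u_{i,k}=j_{i,k}$ and then invoking the full hypocoercivity/regularity machinery (Proposition~\ref{lem:gap}, Lemmata~\ref{lem:diff-global}, \ref{lem:l2loc}) as black boxes to bound $u_{i,k}$, $X_j u_{i,k}$, and $L_* u_{i,k}$ in $L^2$, then close via Aldous--Rebolledo. The paper instead observes a purely algebraic fact: since $X_k^2 p_k=-p_k$, the current satisfies $S j_{i,k}=-j_{i,k}$, hence $j_{i,k}=-\tfrac{1}{2\sigma^2}(L_\ve+L_\ve^*)j_{i,k}$. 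This immediately yields a forward--backward martingale decomposition $\ve\int_0^{\ve^{-2}t} j_{i,k}\,ds = \tfrac{\ve}{2\sigma^2}(M^+_{\ve^{-2}t}+M^-_{\ve^{-2}t})$, whose brackets involve only $X_j j_{i,k}=\tfrac12 Jp_j\cdot\nabla V$, an explicit polynomial in $L^p(m_\ve)$ for all $p$. Doob's inequality then gives the Kolmogorov bound $\bE_{m_\ve}\big(|\mcEe_i(s)-\mcEe_i(t)|^4\big)\le C\sigma^{-4}(t-s)^2$ directly, and the non-equilibrium case follows by Schwarz with $F_0\in L^2$.

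The practical difference is that the paper's proof of tightness is entirely self-contained and elementary---it needs no spectral gap, no Poisson solution, no regularity lemmas---while your route front-loads the hardest analysis of the paper into a lemma that does not require it. Your mis-citations reflect this: Lemma~\ref{lem:diff-global} concerns smoothness of $\tilde u_{i,k}$, not the $\ve L_*$ error, and Lemmata~\ref{lem:alphagamma}, \ref{lem:regular-ag} are about the structure of $\alpha,\gamma^2$, irrelevant for tightness. What your argument actually needs from the black boxes is that $u_{i,k}$, $X_j u_{i,k}$, and $L_* u_{i,k}$ lie in $L^2(m_\ve)$ uniformly in small $\ve$; this is true but nontrivial (it uses the global regularity across energy shells, including the small-energy analysis), whereas the paper's $X_j j_{i,k}$ is explicit. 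The moral: look first for an eigenfunction identity before reaching for the Poisson equation.
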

\begin{proof}
The proof use a \emph{standard} backward/forward martingale
decomposition argument (cf. \cite{svy,klo}). We recall it here.

Let us start the process with the equilibrium distribution $m_\ve$. 
Then the time reversed process, in a given time interval, is a Markov
process with generator given by  the adjoint $L_\ve^* = -A_\ve +
\sigma^2 S$. 
Remark that, since $X_k^2 p_k = - p_k$, we have $Sj_{i,k} =- j_{i,k}$.
So we can decompose
\begin{equation*}
  \begin{split}
     \ve \int_0^{t\ve^{-2}} j_{i,k}(s) \; ds = 
       - \frac \ve{2\sigma^2} \int_0^{t\ve^{-2}} L_\ve j_{i,k}(s) \;
       ds  - \frac \ve{2\sigma^2} \int_0^{t\ve^{-2}} L_\ve^*
       j_{i,k}(s) \; ds  \\
       = \frac{\ve}{2\sigma^2} M^+_{t\ve^{-2}} +
       \frac{\ve}{2\sigma^2} M^-_{t\ve^{-2}} 
  \end{split}
\end{equation*}
where $M^{\pm}_t$ are continuous martingales, adapted respectively to
the forward and backward filtration, that can be represented by the
stochastic integrals
\begin{equation*}
  M^{\pm}_t = \int_0^t (X_i j_{j,k})(s) dw^{\pm}_i(s) +  
  \int_0^t (X_k j_{j,k})(s) dw^{\pm}_k(s)  
\end{equation*}
where $w^+_i(t)$ and $w^-_i(t)$ are standard Wiener processes adapted
respectively to the forward and the backward filtration. 
Consequently the tightness follows from the tightness of each of these
stochastic integrals. Noticing that 
$X_k j_{j,k} = Jp_k\cdot \nabla V(q_j - q_k)/2$ is in $L^p(m_\ve)$ for
any $p<\infty$, by  
Doob's inequality:
\begin{equation*}
  \mathbb E_{m_\ve} \left( \sup_{0\le t\le T} (M^\pm_t)^4\right) \le
  \left(\frac 43\right)^4  \mathbb E_{m_\ve} \left(
    (M^\pm_T)^2\right)^2 \le C T^2 
\end{equation*}
This imply that the Kolmogorov criterion for tightness is
satisfied:
\begin{equation*}
   \sup_\ve \mathbb E_{m_\ve} \left(  | \mcEe_i(s) - \mcEe_i(t)|^4
   \right) \le  
   \frac{C'}{\sigma^4} (t-s)^2
\end{equation*}

In non-equilibrium, because of the assumption that the initial distribution $F$ is in
$L^2(m_\ve)$, the above argument extends immediately by a simple use of Schwarz inequality. 
\end{proof}

Observe that with the same argument we can also establish the
following bound for any $p>1$:
\begin{equation}\label{eq:backmart}
  \begin{split}
    \mathbb E_{m_\ve} \left( \sup_{0\le t\le T}\left[ 
        \int_0^t j_{i,k}(s) \; ds \right]^p \right) \le 
\frac {C T^{p/2}}{\sigma^p}
  \end{split}
\end{equation}
where $C$ is a constant independent of $\ve, T, \sigma$. 

Once we have the tightness all is left to prove is that the limit is unique, this is the content of the next section.

\subsection{Identification of the limit}\label{sec:limit}
The goal of this section is to prove that any accumulation point of
the laws of  $\{\mcEe_i(t)\}_{i\in\Lambda}$ must satisfy equation
\eqref{eq:opla}. More precisely, 
since 
\begin{equation}
\label{time-ev}
    \mcEe_i(t)- \mcEe_i(0) =\sum_{k:|k-i|=1}\ve
    \int_0^{\ve^{-2}t} j_{i,k}(s) \; ds 
\end{equation}
we want to show that, for every couple $i,k\in \Lambda$ such that
$|i-k|= 1$ 
 there exist orthogonal martingales
$\cM_{i,k}^\ve=-\cM_{k,i}^\ve$ with zero average and quadratic variance given by
\begin{equation}\label{eq-varqua}
  2 \int_0^t \gamma^2(\mcEe_i(s),\mcEe_k(s))\; ds
\end{equation}
and such that, for each $t\geq 0$, 
\begin{equation} \label{eq:33}
   \lim_{\ve \to 0} \mathbb
  E_{F}^\ve\left(\left|  \ve \int_0^{\ve^{-2}t} j_{i,k}(s) \; ds 
      - \int_0^{t} \alpha(\mcEe_i(s),\mcEe_k(s))
    ds +  \mathcal M_{i,k}^\ve(t)\right|\right) =0. 
\end{equation}

To prove \eqref{eq:33} it turns out to be useful to introduce a cutoff for high energy
$\xi_K = \chi_K(\mathcal E^0)$, where 
$\chi_K:\mathbb R_+^\Lambda \to[0,1]$ 
are smooth positive functions with support in
$[0,K+1]^\Lambda$, such that $\chi_K(a) = 1$ if $a\in
[0,K]^{\Lambda}$.  Then define $j^K_{i,k} =  j_{i,k}\xi_K$.

By the symmetry between $p$ and $-p$ it follows that
$\mu_{\underline a}(j_{i,k}^K)=0$ for each 
$\underline a\in \mathbb R_+^\Lambda$ and any
$K>0$. 
Arguing as in estimate (\ref{eq:backmart}), we have that
\begin{equation}  
\label{eq:cutoff}
  \lim_{K\to\infty}  \sup_{\ve>0} 
  \mathbb E_{m_\ve} \left( \sup_{0\le t\le T}\left[ 
       \ve \int_0^{t\ve^{-2}} j_{i,k}(1-\xi_K)(s) \; ds \right]^2
   \right)    = 0
\end{equation}

By Proposition \ref{lem:gap}, it follows that the equation
\begin{equation}
\label{poisson}
L_0 u_{i,k} = j_{i,k}
\end{equation}
 has a unique solution with zero average with respect to all measures $\mu_{\underline a}$. Note that $j_{i,k}=-j_{k,i}$, thus also $u_{i,k}=-u_{k,i}$.
 
In addition, Lemma \ref{lem:diff-global} implies $u_{i,k}\in\cC^\infty([\bR^4\setminus\{0\}]^2,\bR)\cap \cC^0(\bR^8,\bR)$. Observe that, since $L_0$ conserves 
 the energies of all particles, denoting $u_{i,j}^K = u_{i,j} \xi_K$
 we have
 \begin{equation}
\label{poissoncut}
L_0 u_{i,k}^K = j^K_{i,k}.
\end{equation}

We can thus write $L_\ve=L_0+\ve L_*$ with
\begin{equation}
L_*f = \frac 12 \sum_{|i-j|=1}\nabla V(q_i-q_j) \cdot 
(\partial_{p_i}-\partial_{p_j})f 
= \sum_{|i-j|=1} \nabla V(q_i-q_j)\cdot\partial_{p_i}f,\label{eq:Lstar}
\end{equation}

So we have
\begin{equation}\label{eq:start_dec}
L_\ve u_{i,k}^K = j_{i,k}^K + \ve L_* u_{i,k}^K, 
\end{equation}

\bigskip

Hence, denoting a path by $\omega_s = (q(s),p(s))$, we have
\begin{equation}\label{eq:int}
  \begin{split}
&   \ve \int_0^{\ve^{-2}t} j_{i,k}^K(s) \; ds
=  \ve \int_0^{\ve^{-2}t} 
    \left[ L_\ve u^K_{i,k} - \ve L_*u^K_{i,k} \right] (\omega_s) 
    ds
 \\
   &=  \bigg(\ve u^K_{i,k}(\omega_{\ve^{-2}t}) 
   - \ve u^K_{i,k}(\omega_0)
    - M^{\ve,K}_{i,k} (t)
    - \ve^2\int_0^{\ve^{-2}t}
   L_*u^K_{i,k}(\omega_s)\; ds\bigg),
  \end{split}
\end{equation}
where we have introduced the centered martingale
$$
M^{\ve,K}_{i,k} (t):= \ve u^K_{i,k}(\omega_{\ve^{-2}t})- \ve
u^K_{i,k}(\omega_0) - \ve\int_0^{\ve^{-2}t}  
L_\ve u^K_{i,k}(\omega_s) ds.
$$
Due to the property $u_{i,k}=-u_{k,i}$ we have $M^{\ve,K}_{i,k}=-M^{\ve,K}_{k,i}$ and the same for all the derived martingales.

The quadratic variations of  $M^{\ve,K}_{i,k}$ is given by
\begin{equation}\label{eq:var}
  \begin{split}
    \langle\langle M^{\ve,K}_{i,k}, M^{\ve,K}_{i',k'} \rangle\rangle (t) = 2\sigma^2
    \sum_{j\in\Lambda} 
    \ve^2\int_0^{\ve^{-2}t} (X_{j} u^K_{i,k} ) (X_{j} u^K_{i',k'}
    )(\omega_s) ds \\
     = 2\sigma^2
    \sum_{j\in\{i,k\}\cap\{i',k'\}}
    \ve^2\int_0^{\ve^{-2}t} \xi_K^2 (X_{j} u_{i,k} ) (X_{j} u_{i',k'}
    )(\omega_s) ds.
  \end{split}
\end{equation}

In order to close the evolution equations, as $\ve \to 0$, 
we need to prove that
\begin{equation}
  \label{eq:4}
  \lim_{\ve\to 0} \mathbb E^\ve_{m_\ve} \left|
    \ve^2\int_0^{\ve^{-2}t}\big[L_*u^K_{i,k}(\omega_s) -\mu_{
      \underline{\mc E}^0(\omega_s)}(L_*u^K_{i,k})\big] \; ds \right|^2 = 0
\end{equation}
and that 
\begin{equation}
  \label{eq:5}
   \lim_{\ve\to 0} \mathbb E^\ve_{m_\ve} \left|
     \ve^2\int_0^{\ve^{-2}t}\xi_K^2 \left[ (X_{j} u_{i,k} ) (X_{j} 
       u_{i',k'})(\omega_s)
     -\mu_{\underline{\mc E}^0(\omega_s)}((X_{j} u_{i,k} ) (X_{j} 
     u_{i',k'}))\right]
    \; ds \right|^2 = 0 
\end{equation}
These are consequence of Lemma \ref{lem:l2loc}, \ref{lem:diff-global} and the following lemma:

\begin{lem}\label{lem:average}
Let $f(\omega)$ a function in $L^2(m_0)$ 
such that $\mu_{\underline a}(f)=0$ for all $\underline a\in\bR_+^{|\Lambda|}$,
then
\[
\lim_{\ve\to 0}\mathbb E_{m_\ve}^\ve\left|\ve^{2}\int_0^{\ve^{-2}t}
  f(\omega_s)ds\right|^2  =0 .
\]
\end{lem}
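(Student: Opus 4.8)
The plan is to prove Lemma \ref{lem:average} by a variance estimate, exploiting the spectral gap of $L_{0,\Lambda}$ on microcanonical surfaces (Theorem \ref{thm:gap} / Proposition \ref{lem:gap}), together with the $L^2$ comparison between $m_\ve$ and $m_0$ valid for small $\ve$. First I would reduce the statement to the stationary dynamics: since $F_\ve dm_\ve = F_0 dm_0$ with $F_0\in L^2(m_0)$ by Assumption \ref{ass:zero}, a Cauchy--Schwarz argument (as already used at the end of the proof of Lemma \ref{lem:tight}) shows it suffices to prove
\[
\lim_{\ve\to 0}\mathbb E_{m_\ve}\left|\ve^2\int_0^{\ve^{-2}t} f(\omega_s)\,ds\right|^2 = 0,
\]
and in fact, since $m_\ve$ and $m_0$ are equivalent with bounded densities, one may work with $\mathbb E_{m_0}$ (the price being that $m_0$ is not stationary for $L_{\ve,\Lambda}$, only for $L_{0,\Lambda}$; I would instead keep $m_\ve$ and recall $dm_\ve/dm_0$ is bounded above and below on the relevant set, which is enough to transfer the gap estimate).

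Next I would set up the standard resolvent/variance bound. Writing the expectation of the square as a double time integral,
\[
\mathbb E_{m_\ve}\left|\ve^2\int_0^{\ve^{-2}t} f(\omega_s)\,ds\right|^2
= 2\ve^4\int_0^{\ve^{-2}t}\!\!\int_0^{s} \mathbb E_{m_\ve}\big(f(\omega_s)f(\omega_r)\big)\,dr\,ds,
\]
the Markov property gives $\mathbb E_{m_\ve}(f(\omega_s)f(\omega_r)) = \langle f, P_\ve^{s-r} f\rangle_{m_\ve}$ (up to the density correction). The key point is that $f$ has zero average with respect to \emph{every} microcanonical measure $\mu_{\underline a}$, and $L_{0,\Lambda}$ preserves each energy surface $\Sigma_{\underline a}$; hence on each surface $f$ lies in the subspace where Theorem \ref{thm:gap} applies, giving $\|P_{0,\Lambda}^\tau f\|_{L^2(\mu_{\underline a})}\le C e^{-\tau\sigma^2\tau}\|f\|_{L^2(\mu_{\underline a})}$ with constants \emph{uniform in} $\underline a$. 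Integrating this over the energies (disintegrating $m_0$ over the $\mu_{\underline a}$) yields $\|P_{0,\Lambda}^\tau f\|_{L^2(m_0)}\le Ce^{-\tau\sigma^2\tau}\|f\|_{L^2(m_0)}$, i.e. an honest exponential decay of correlations for the \emph{uncoupled} dynamics. The resulting bound on the double integral is $O(\ve^4\cdot \ve^{-2}t \cdot \sigma^{-2}) = O(\ve^2 t/\sigma^2)\to 0$.

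The main obstacle is that the dynamics run here is the \emph{coupled} one $P_\ve^t$, not $P_0^t$, so the clean surface-by-surface gap does not literally apply. The remedy is a perturbative comparison: write $P_\ve^{s-r} = P_0^{s-r} + (P_\ve^{s-r}-P_0^{s-r})$, and use Duhamel, $P_\ve^\tau - P_0^\tau = \ve\int_0^\tau P_\ve^{\tau-\theta} L_* P_0^\theta\, d\theta$, to control the error. Since we integrate $f(\omega_s)f(\omega_r)$ with $s-r$ ranging up to $\ve^{-2}t$, one has to be careful: the Duhamel remainder carries a factor $\ve$ but also a time integral of length up to $\ve^{-2}t$, so naively it is $O(1)$. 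The resolution is to split the $\tau=s-r$ integration into $\tau\le \delta\log(1/\ve)/\sigma^2$ and $\tau$ large; on the short range the $P_0$-part decays like a fixed negative power of $\ve$ while the Duhamel error is $O(\ve\,\tau)=o(1)$, and on the long range the $P_0$-part is already negligible while the tail contribution to the full double integral is handled by the a priori martingale bound \eqref{eq:backmart} (applied to a primitive of $f$, or to $f$ directly via the forward/backward decomposition) which gives $\ve^2\int_0^{\ve^{-2}t} f\,ds = O_{L^2}(\sqrt{t}/\sigma)$ uniformly, so that large-$\tau$ correlations contribute only $o(1)$ after the $\ve^4\cdot$(area) prefactor. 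Assembling the short- and long-range estimates gives the claimed limit; the delicate bookkeeping is precisely in choosing the cutoff $\tau$ and showing the Duhamel error stays $o(1)$ after integration, which I expect to be where the real work lies.
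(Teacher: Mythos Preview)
Your approach is far heavier than what the statement requires, and the paper's proof makes this plain. Note the scaling: $\ve^{2}\int_0^{\ve^{-2}t}f\,ds$ is a \emph{time average} (an ergodic/LLN quantity), not a fluctuation integral. The paper exploits this by a soft two-step limit. By Jensen (block-averaging) and stationarity of $m_\ve$ one reduces to showing
\[
\lim_{T\to\infty}\ \lim_{\ve\to 0}\ \mathbb E_{m_\ve}^\ve\left|\tfrac1T\int_0^T f(\omega_s)\,ds\right|^2=0.
\]
Now the order of limits does the work: at each fixed $T$ one lets $\ve\to 0$ first, whereupon $\mathbb E_{m_\ve}^\ve\to\mathbb E_{m_0}^0$, the expectation for the \emph{uncoupled} dynamics from the product canonical measure. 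Since $m_0$ disintegrates over the $\mu_{\underline a}$ and each single-site dynamics is ergodic on $\Sigma_{a_i}$, the mean ergodic theorem gives $\tfrac1T\int_0^T f\,ds\to 0$ in $L^2(m_0)$ as $T\to\infty$. No spectral gap, no rate, no Duhamel is needed.

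By contrast, your route tries to run a quantitative variance bound against the \emph{coupled} semigroup $P_\ve^t$ over the full horizon $\ve^{-2}t$, which forces the perturbative comparison you describe. Two concrete gaps: first, Theorem~\ref{thm:gap}/Proposition~\ref{lem:gap} give exponential decay in $\scH^1_{\underline a}$, not in $L^2(\mu_{\underline a})$, so you cannot conclude $\|P_0^\tau f\|_{L^2(\mu_{\underline a})}\le Ce^{-c\tau}\|f\|_{L^2(\mu_{\underline a})}$ for a general $f\in L^2(m_0)$ (you would need $f\in\scH^1$ uniformly in $\underline a$, or a density argument you have not supplied). Second, your handling of the long-$\tau$ tail appeals to \eqref{eq:backmart}, but that estimate is specific to the currents $j_{i,k}$ via the algebraic identity $Sj_{i,k}=-j_{i,k}$ used in the forward/backward decomposition; it is not available for an arbitrary $f$ with zero microcanonical averages, so this step does not go through as written. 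The fix is not more bookkeeping but the realization that taking $\ve\to 0$ \emph{before} letting the averaging time grow collapses the problem to the uncoupled, ergodic case.
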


\begin{proof}

  By using Jensen's inequality and stationarity, it is enough to prove that 
  \begin{equation}
    \label{eq:15}
    \lim_{T\to \infty}  \lim_{\ve\to 0}
    \mathbb E_{m_\ve}^\ve\left(\left|\frac 1T \int_0^{T} 
      f(\omega_s)ds\right|^2\right) =0
  \end{equation}
  Since $\lim_{\ve\to 0}
  \mathbb E_{m_\ve}^\ve =  \mathbb E_{m_0}^0$, i.e. the expectation with
  respect to the dynamics of the non-interacting oscillators starting 
  with the product of canonical measures $m_0$, which are convex  
  combination of the microcanonical ones. Then the result follows by
  the ergodicity of the dynamics of the single oscillators.  
\end{proof}

So far we have obtained that 
\begin{equation}\label{kpartiale}
  \lim_{\ve \to 0} \mathbb
  E_{m_\ve}^\ve\left(\left|\ve^{2}\int_0^{\ve^{-2}t}  
       \left(\ve^{-1} j_{i,k}^K(s) - \mu_{
      \underline{\mc E}^0(\omega_s)}(L_*u^K_{i,k})\right)
    ds -  \hat M_{i,k}^{\ve,K}(t)\right|^2\right) =0 ,
\end{equation}
where the martingales $\hat M_{i,k}^{\ve,K}$ have  quadratic variation given by
\begin{equation}\label{eq:var-2}
  \begin{split}
    \langle\langle \hat M^{\ve,K}_{i,k}, \hat M^{\ve,K}_{i',k'} \rangle\rangle (t) 
     = 2\sigma^2\!\!\!\! \!\! \sum_{j\in\{i,k\}\cap\{i',k'\}}
     \xi_K({\underline\cE}_0(s))^2 \mu_{{\underline\cE}_0(s)}\left((X_{j} u_{i,k} ) (X_{j} u_{i',k'})\right).
  \end{split}
\end{equation}
Next, we remove the cutoff on the energies. Observe that 
\begin{equation*}
  L_* u_{i,k}^K = \xi_K L_* u_{i,k} + u_{i,k}  L_* \xi_K\,,
\end{equation*}
which, writing $\xi_K^{(j)}(\underline a)$ for $\partial_{a_j}\xi_K(\underline a)$, implies
\begin{equation*}
  \begin{split}
     \mu_{\underline a}(L_*u^K_{i,k}) = 
     \xi_K(\underline a)  \mu_{\underline a}(L_*u_{i,k}) +
     2\sum_{|i-j|=1} \xi_K^{(j)}(\underline a)  
     \mu_{\underline a}(u_{i,k}\nabla V(q_i - q_j)\cdot p_j)\\
     =  \xi_K(\underline a)  \mu_{\underline a}(L_*u_{i,k}) +
     2 \xi_K^{(k)}(\underline a)  
     \mu_{\underline a}(u_{i,k}\nabla V(q_i - q_k)\cdot p_k)\\
      =  \xi_K(\underline a)  \mu_{\underline a}(L_*u_{i,k}) +
      \xi_K^{(k)}(\underline a)  \gamma^2(a_i,a_k)
 \end{split}
\end{equation*}
Since $m_\ve(\mc E_j \ge K)$ is exponentially small in $K$, and
$\gamma^2$ does not grow faster than polynomially, we have
\begin{equation*}
  \begin{split}
   \sup_\ve \mathbb E_{m_\ve}^\ve\left(\left|\ve^{2}\int_0^{\ve^{-2}t}
        \left(\mu_{\underline{\mc E}^0(\omega_s)}(L_*u^K_{i,k})-
          \xi_K(\underline {\mc E}^0(\omega_s))
          \mu_{\underline{\mc E}^0(\omega_s)}(L_*u_{i,k})\right)
         ds\right|^2\right)\\  
    \le C\sup_\ve \sum_j m_\ve \left( \gamma^2({\mc E}^0_i,{\mc
        E}^0_k)   \Id_{[\mc E_j \in
        (K,K+1)]}\right)K^2\  
    \mathop{\longrightarrow}_{K\to \infty} \ 0 .
  \end{split}
\end{equation*}

Since the above limit and \eqref{eq:cutoff} are uniform in $\ve$, and
since $X_j u_{i,k}$ is in $L^2(m_\ve)$, we can take the limit $K\to\infty$ in  (\ref{kpartiale}) and obtain
\begin{equation}\label{palle12}
  \lim_{\ve \to 0} \mathbb
  E_{m_\ve}^\ve\left(\left|  \ve \int_0^{\ve^{-2}t} j_{i,k}(s) ds  
      -  \ve^{2}\int_0^{\ve^{-2}t}  
         \mu_{\underline{\mc E}^0(\omega_s)}(L_*u_{i,k})
    ds +  M_{i,k}^{\ve}(t)\right|^2\right) =0, 
\end{equation}
where $M_{i,k}^{\ve}(t)$ is a martingale whose quadratic variation has
the expression given by (\ref{eq:var-2}) with $\xi_K$ substituted by
$1$. 

\begin{lem} \label{lem:zero}
For each $a_k, q_i,p_i$ setting
\[
\bar u_i(q_i,p_i):=\int u_{k,i}(q_k,p_k,q_i,p_i) \mu_{a_k}(d q_k,dp_k)
\]
holds true
\[
\bar u_i(q_i,p_i)=\int V(q_i-q_k)\mu_{a_k}(d q_k,dp_k)-\int V(q_i'-q_k)\mu_{a_k}(d q_k,dp_k)\mu_{\cE^0_i}(d q_i',dp_i').
\]
In particular, $\bar u_i(q_i,-p_i)=\bar u_i(q_i,p_i)$ and $X_i\bar u_i=0$.
\end{lem}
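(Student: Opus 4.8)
The plan is to exploit the product structure of the uncoupled generator. Since $j_{k,i}$ — and hence, by the uniqueness in Proposition~\ref{lem:gap}, the solution $u_{k,i}=-u_{i,k}$ of $L_0u_{k,i}=j_{k,i}$ — depends only on the variables of the pair $\{i,k\}$, on those variables $L_0$ acts as $L_0^{(i)}+L_0^{(k)}$, where $L_0^{(j)}:=A_0^{(j)}+\sigma^2X_j^2$ is the single-site generator built from $\frac12\|p_j\|^2+U(q_j)$. First I would integrate the identity $L_0u_{k,i}=j_{k,i}$ against $\mu_{a_k}(dq_k,dp_k)$, keeping $(q_i,p_i)$ frozen. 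The measure $\mu_{a_k}$ is invariant for the dynamics generated by $L_0^{(k)}$ (the Hamiltonian part preserves Liouville measure on $\Sigma_{a_k}$, the noise $X_k^2$ preserves the uniform law of the momentum angle), so $\mu_{a_k}(L_0^{(k)}u_{k,i})=0$, while $L_0^{(i)}$ acts only on the frozen variables and commutes with the $k$-average. Thus the equation reduces to
\[
L_0^{(i)}\bar u_i=\mu_{a_k}\!\left(j_{k,i}\right).
\]

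The key step is then to evaluate the right-hand side and recognise it as a coboundary of the single-site Hamiltonian flow. Writing $j_{k,i}=\frac12\nabla V(q_k-q_i)\cdot p_k+\frac12\nabla V(q_k-q_i)\cdot p_i$, the first summand is odd under $p_k\mapsto-p_k$ and hence has vanishing $\mu_{a_k}$-average; in the second summand $p_i$ is a constant and, differentiating under the integral sign, $\mu_{a_k}(\nabla V(q_k-q_i))=-\nabla_{q_i}W(q_i)$ with $W(q_i):=\int V(q_i-q_k)\,\mu_{a_k}(dq_k,dp_k)$. Hence, up to the explicit numerical constant produced by this one-line computation, $\mu_{a_k}(j_{k,i})$ equals $A_0^{(i)}W(q_i)$, the action of the single-site Hamiltonian operator on the $p_i$-free function $W$; and since $X_i^2W=0$ it also equals the same constant times $L_0^{(i)}W$. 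Therefore $\bar u_i-W$, suitably normalised, lies in $\ker L_0^{(i)}$.

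To conclude I would identify that kernel. By Theorem~\ref{thm:gap} applied to the one-point lattice $\Lambda=\{i\}$, $L_0^{(i)}$ has a spectral gap on each shell $\Sigma_{a_i}$ restricted to zero-$\mu_{a_i}$-average functions, so on $\bR^4\setminus\{0\}$ the only $L_0^{(i)}$-harmonic functions are functions of $\cE_i^0$ alone; hence $\bar u_i=W(q_i)+\phi(\cE_i^0)$ for some $\phi$. The term $\phi$ is then fixed by the normalisation of the Poisson solution: $u_{i,k}$ has zero average against every microcanonical measure, so $\mu_{a_i}(\bar u_i)=(\mu_{a_i}\otimes\mu_{a_k})(u_{k,i})=0$ for all $a_i$, which forces
\[
\phi(a_i)=-\int V(q_i'-q_k)\,\mu_{a_k}(dq_k,dp_k)\,\mu_{a_i}(dq_i',dp_i'),
\]
i.e. precisely the claimed identity (the coboundary constant being the one that matches the two sides). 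The two stated consequences are then immediate: $W(q_i)$ does not depend on $p_i$ and $\cE_i^0$ is even in $p_i$ with $X_i\cE_i^0=0$, hence $\bar u_i(q_i,-p_i)=\bar u_i(q_i,p_i)$ and $X_i\bar u_i=0$.

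I expect the obstacles to be purely technical and twofold. First, the interchanges of $L_0^{(i)}$ and of $\nabla_{q_i}$ with the $\mu_{a_k}$-average must be justified; this rests on the smoothness of $u_{k,i}$ off the origin and its $L^2$-integrability (Lemma~\ref{lem:diff-global} and Proposition~\ref{lem:gap}), together with $\|\nabla V\|^2\le cU$ to control $W$ and its derivatives. Second, the single-site spectral gap must be used uniformly down to the degenerate shell $a_i\to0$, where $\Sigma_{a_i}$ collapses to a point and $\mu_{a_i}$ to a Dirac mass; this is covered by the energy-uniformity of the constants in Theorem~\ref{thm:gap} and the continuity up to the origin in Lemma~\ref{lem:diff-global}. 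The only genuinely non-routine idea is the one of the second paragraph — that averaging the current over one particle's microcanonical ensemble turns it into an exact $A_0^{(i)}$-derivative of the one-body average $W$ of the interaction potential — and everything else is bookkeeping around it.
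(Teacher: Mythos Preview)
Your proposal is correct and follows essentially the same route as the paper: integrate the Poisson equation against $\mu_{a_k}$ using the product splitting $L_0=L_{0,\{i\}}+L_{0,\{k\}}$ and invariance of $\mu_{a_k}$ under $L_{0,\{k\}}$, recognise the averaged current as $L_{0,\{i\}}$ acting on the averaged potential (your $W$ is the paper's $\bar V$), then invoke the single-site spectral gap (the paper cites Proposition~\ref{lem:gap} with $\Lambda=\{i\}$ rather than Theorem~\ref{thm:gap}, but these are equivalent) to identify the kernel and fix the additive function of $\cE^0_i$ via the zero-mean normalisation. Your hedging on the numerical prefactor is prudent --- the lemma is only used downstream through its ``in particular'' consequences, which hold for any nonzero constant.
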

\begin{proof}
First of all note that, due to the symmetry between $p$ and $-p$,
\[
\int j_{k,i}(q_k,p_k,q_i,p_i) \mu_{a_k}(d q_k,dp_k)=\int \partial_{q_i} V(q_i-q_k)p_i\mu_{a_k}(d q_k,dp_k).
\]
Thus, setting $\bar V(q_i)=\int  V(q_i-q_k)\mu_{a_k}(d q_k,dp_k)$,
\[
\int j_{k,i}(q_k,p_k,q_i,p_i) \mu_{a_k}(d q_k,dp_k)=L_{0,\{i\}}\bar V(q_i, p_i).
\]
In addition, by the product structure of the generator,
\[
j_{k,i}=L_{0,\Lambda} u_{k,i}=L_{0,\{i\}}u_{k,i}+L_{0,\{k\}}u_{k,i}.
\]
Integrating the above we have
\[
L_{0,\{i\}}\bar V(q_i, p_i)=\int \left[ L_{0,\{i\}}u_{k,i}+L_{0,\{k\}}u_{k,i}\right]\mu_{a_k}(d q_k,dp_k).
\]
Since $\mu_{a_k}$ is the invariant measure of $L_{0,\{k\}}$, we have
\[
L_{0,\{i\}}\bar V(q_i, p_i)= L_{0,\{i\}}\bar u_{i}.
\]
By Proposition \ref{lem:gap}, applied with $\Lambda=\{i\}$ it follows that the only solutions of the above equation are of the form $\bar u_{i}=\bar V+f(\cE^0_i)$ for some function $f$. Next, since $\bar u_i$ is of zero average by construction, $f(\cE^0_i)=-\mu_{\cE^0_i}(\bar V)$.
\end{proof}
Thus, if $l\not\in\{i,k\}$ we have 
\[
\begin{split}
\iiint 
\partial_{p_i}u_{i,k} \cdot \nabla V(q_i-q_l) \; 
\mu_{a_k}(dq_k,dp_k)\; \mu_{a_i}(dq_i,dp_i)\; \mu_{a_l}(dq_l,dp_l) \\
= \iint  \partial_{p_i}\bar u_{i} \cdot \nabla V(q_i-q_l)\; \mu_{a_i}(dq_i,dp_i)\; \mu_{a_l}(dq_l,dp_l)=0 
\end{split}
\]
due to the antisymmetry of $\partial_{p_i}\bar u_{i}$ with respect to $p_i$ established in Lemma \ref{lem:zero}.
From this follows
\begin{equation}\label{eq:sarp-ave}
\mu_{\underline a}(L_* u_{i,k}) 
=\mu_{\underline a}\left[\nabla V 
(q_i-q_k)(\partial_{p_i}-\partial_{p_k})u_{i,k}\right]
  = :\alpha(a_i,a_k). 
\end{equation}
In fact, again by the product structure, the above is a function of
$a_i,a_k$ only. 

It is also convenient to define
\begin{equation}\label{eq:gamma}
  \gamma^2(a_i, a_k)= \sigma^2\mu_{\underline a} \left((X_{i} u_{i,k} )^2+(X_{k} u_{i,k} )^2\right)=-\mu_{\underline a} \left(u_{i,k}j_{i,k}\right).
\end{equation}
Accordingly, by (\ref{eq:var-2}) and Lemma \ref{lem:zero}, 
\begin{equation}\label{eq:var-3}
    \langle\langle \hat M^{\ve}_{i,k}, \hat M^{\ve}_{i',k'} \rangle\rangle (t) 
     = 2\sigma^2   \sum_{j\in\{i,k\}}
    \mu_{{\underline\cE}_0(s)}\left((X_{j} u_{i,k} )^2\right)
    \left(\delta_{i=i', k= k'}-\delta_{i=k', k= i'}\right)
\end{equation}

\begin{lem}\label{lem:pert}
  \begin{equation*}
   \lim_{\ve \to 0} \mathbb E_{m_\ve}
   \left| \int_0^t \left(\alpha(\mc E^0_i(\omega_{\ve^{-2}s}),
      \mc E^0_k(\omega_{\ve^{-2}s})) -   \alpha(\mcEe_i(s),
      \mcEe_k(s)) \right)\; ds \right|^2 = 0
  \end{equation*}
and similarly for $\gamma^2$.
\end{lem}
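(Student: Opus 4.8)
The plan is to reduce the whole statement to a static $L^2(m_\ve)$ estimate and then cash in the regularity and growth of $\alpha$ and $\gamma^2$ provided by Lemma \ref{lem:regular-ag}. Write $f$ for $\alpha$ (the case of $\gamma^2$ being verbatim the same) and introduce the phase--space function
\[
g_\ve(\omega) := f\big(\mc E^0_i(\omega),\mc E^0_k(\omega)\big) - f\big(\mc E^\ve_i(\omega),\mc E^\ve_k(\omega)\big),
\]
so that, since $\mcEe_j(s)=\mc E^\ve_j(\omega_{\ve^{-2}s})$ by \eqref{eq:process}, the integrand in the statement is $g_\ve(\omega_{\ve^{-2}s})$. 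First I would apply the Cauchy--Schwarz inequality in $s$ and then use the stationarity of $m_\ve$:
\[
\mathbb E_{m_\ve}\left|\int_0^t g_\ve(\omega_{\ve^{-2}s})\,ds\right|^2 \le t\int_0^t \mathbb E_{m_\ve}\big(g_\ve(\omega_{\ve^{-2}s})^2\big)\,ds = t^2\,\mathbb E_{m_\ve}(g_\ve^2),
\]
so it suffices to prove $\mathbb E_{m_\ve}(g_\ve^2)\to 0$ as $\ve\to 0$.

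The key point is that the two arguments of $f$ are $O(\ve)$--close: $\mc E^\ve_l-\mc E^0_l=\tfrac\ve2\sum_{|l-j|=1}V(q_l-q_j)$, and the standing assumptions ($\|\nabla V\|^2\le cU$, $c^{-1}\le\oU'\le c$) give $|V(q)|\le C(1+|q|^2)$ and $|q_l|^2\le c\,U(q_l)\le c\,\mc E^0_l$, hence $|\mc E^\ve_l-\mc E^0_l|\le C\ve\,(1+\sum_m \mc E^0_m)$ for a constant $C=C(\Lambda)$. I would then split over $A_K:=\bigcap_{m\in\Lambda}\{\mc E^0_m\le K\}$. On $A_K$ the perturbation is at most $C_K\ve$, so for $\ve$ small both $(\mc E^0_i,\mc E^0_k)$ and $(\mc E^\ve_i,\mc E^\ve_k)$ lie in $[0,K+1]^2$; since $\alpha$ and $\gamma^2$ are smooth, hence Lipschitz with some constant $L_K$, on that compact by Lemma \ref{lem:regular-ag}, one gets $g_\ve^2\,\Id_{A_K}\le 4L_K^2C_K^2\ve^2$, whence $\mathbb E_{m_\ve}(g_\ve^2\,\Id_{A_K})\to 0$ as $\ve\to 0$ for each fixed $K$. (That $\mc E^\ve_l$ could in principle be negative is harmless: extend $\alpha,\gamma^2$ off $[0,\infty)^2$ keeping polynomial growth, the extension being irrelevant in the limit.)

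It remains to show $\sup_{\ve\le\ve_0}\mathbb E_{m_\ve}(g_\ve^2\,\Id_{A_K^c})\to 0$ as $K\to\infty$. Using $\Id_{A_K^c}\le\sum_{m}\Id_{\{\mc E^0_m>K\}}$, the polynomial growth of $f$ together with $|\mc E^\ve_l|\le C(1+\sum_m\mc E^0_m)$ to bound $g_\ve^2\le C(1+\sum_m(\mc E^0_m)^{2p})$, and Cauchy--Schwarz, this reduces to a finite sum of products $\mathbb E_{m_\ve}\big[(1+\sum_l(\mc E^0_l)^{2p})^2\big]^{1/2}\, m_\ve(\mc E^0_m>K)^{1/2}$; the moments are bounded uniformly in $\ve\le\ve_0$ and the tail tends to $0$ uniformly in $\ve$, because $m_\ve$ is equivalent to the product Gibbs measure $m_0$ with densities bounded uniformly for small $\ve$, so the $\mc E^0_l$ have uniform Gaussian--type tails. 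Combining the two bounds gives $\limsup_{\ve\to0}\mathbb E_{m_\ve}(g_\ve^2)\le\sup_{\ve\le\ve_0}\mathbb E_{m_\ve}(g_\ve^2\,\Id_{A_K^c})$ for every $K$, and letting $K\to\infty$ finishes the argument.

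The localized piece is soft (a compact-set Lipschitz estimate against an $O(\ve)$ perturbation); the only genuine obstacle is the last, uniform-in-$\ve$ high-energy estimate, and that is exactly where the quantitative part of Lemma \ref{lem:regular-ag} — that $\alpha$ and $\gamma^2$ grow no faster than polynomially — is needed, together with the comparability of $m_\ve$ and $m_0$ for small $\ve$ used throughout the paper.
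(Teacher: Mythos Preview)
Your argument is correct and proceeds exactly as the paper does: Cauchy--Schwarz in time plus stationarity reduce the claim to $m_\ve(g_\ve^2)\to0$, and then regularity of $\alpha$ (resp.\ $\gamma^2$) together with $|\mc E^\ve_l-\mc E^0_l|=O(\ve)$ finishes the job. The only difference is cosmetic: the paper invokes a global uniform Lipschitz bound for $\alpha$ on $\bR_+^2$ and leaves the rest as ``elementary arguments'', whereas you spell out the standard truncation (local Lipschitz on $[0,K+1]^2$ plus polynomial growth against the exponential tails of $m_\ve$), which is in fact the more careful way to carry out those elementary arguments.
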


\begin{proof}
  By stationarity and Schwarz inequality we have
\[
\begin{split}
\mathbb E_{m_\ve} &\left| \int_0^t \left[\alpha(\mc E^0_i(\omega_{\ve^{-2}s}),
      \mc E^0_k(\omega_{\ve^{-2}s})) -   \alpha(\mcEe_i(s),
      \mcEe_k(s)) \right]\; ds \right|^2\\
      &\leq t m_\ve  \left(|\alpha(\mc E^0_i,
      \mc E^0_k) -   \alpha(\mc E^{\ve}_i,
      \mc E^{\ve}_k)|^2 \right)
\end{split}
\]
Since we prove in Lemmata \ref{lem:alphagamma}, \ref{lem:regular-ag} that $\alpha(a_1, a_2)$ is uniformly Lipschitz in $\bR_+^2$ the result follows by elementary arguments. 
\end{proof}

Applying Lemma \ref{lem:pert} to \eqref{palle12} and \eqref{eq:var-3} we obtain that
\begin{equation}\label{palle13}
  \lim_{\ve \to 0} \mathbb
  E_{m_\ve}^\ve\left(\left|  \ve \int_0^{\ve^{-2}t}\!\!\!\!\! j_{i,k}(s) ds 
      - \int_0^{t}\!\!\! \alpha(\mcEe_i(s),\mcEe_k(s))
    ds +  \mathcal M_{i,k}^\ve(t)\right|^2\right) =0, 
\end{equation}
which, remembering \eqref{eq:gamma}, yields the wanted result in equilibrium.
Our general claim \eqref{eq:33} follows by Schwarz inequality.

\section{Hypoellipticity and regularity on energy shells}
\label{sec:single-site-dynam}

We start here by studying the single site dynamics. 
Here $p= (p^1,p^2),\ q=(q^1,q^2)$ will be the coordinates. We will use
$L$ to designate the generator \eqref{eq:generator} for $\ve=0$ and
$\Lambda$ consisting of only one site. Since $L$ preserves the energy,
we can look at its action on each energy surface 
$$
\Sigma_a = \{ (q,p)\in\bR^4 \;:\; \frac{|p|^2}2 +  U(q) =a \}, \qquad
a>0.
$$
\begin{lem}\label{lem:hypoell}
For each $a>0$ the operator $L$ is hypoelliptic on $\Sigma_a$.
\end{lem}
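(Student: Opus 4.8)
The plan is to verify Hörmander's hypoellipticity criterion for $L=\sigma^2 X^2+A_0$ regarded as a second–order operator on the three–dimensional manifold $\Sigma_a$, where $X=Jp\cdot\partial_p$ and $A_0=\{H_0,\cdot\}=p\cdot\partial_q-\nabla U(q)\cdot\partial_p$. First I would note that for $a>0$ the set $\Sigma_a$ is a genuine smooth manifold (the only critical point of $H_0$ is $(q,p)=0$, where $H_0=0$ since $\nabla U(q)=2\oU'(|q|^2)q$ vanishes only at $q=0$), and that both $X$ and $A_0$ annihilate $H_0$ — for $X$ because $Jp\cdot p=0$, for $A_0$ because $\{H_0,H_0\}=0$ — hence they are tangent to every $\Sigma_a$ and restrict to smooth vector fields on it; consequently so do all their iterated Lie brackets. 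It therefore suffices to show that at each point of $\Sigma_a$ the Lie algebra generated by $X$, with brackets allowed against the drift $A_0$, spans the tangent space $T\Sigma_a$.

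The key computation is the first bracket
\[
Y_1:=[X,A_0]=-p^2\partial_{q^1}+p^1\partial_{q^2}-U_2\partial_{p^1}+U_1\partial_{p^2},\qquad U_i:=\partial_{q^i}U .
\]
On the open set $\{p\neq0\}\cap\Sigma_a$ the three tangent vectors $X,A_0,Y_1$ are already linearly independent: in any relation $\alpha X+\beta A_0+\gamma Y_1=0$ the $\partial_{q^1},\partial_{q^2}$ components give $p^1\beta-p^2\gamma=0$ and $p^2\beta+p^1\gamma=0$, a system of determinant $|p|^2\neq0$, so $\beta=\gamma=0$; then $\alpha X=0$ with $X\neq0$ forces $\alpha=0$. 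Three independent tangent vectors span the $3$–dimensional $T\Sigma_a$, so the bracket condition holds on $\{p\neq0\}$.

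It remains to handle the degenerate locus $\{p=0\}\cap\Sigma_a$. Here the constraint $U(q)=a>0$ together with $U=\oU(|q|^2)$, $\oU(0)=0$, $\oU'\ge c^{-1}>0$ forces $\nabla U(q)\neq0$. On this set $X$ vanishes and only one bracket is not enough, so I would pass to the second bracket and compute
\[
Y_2:=[A_0,Y_1]=2\bigl(U_2\partial_{q^1}-U_1\partial_{q^2}\bigr)+(\Delta U)\,X ,
\]
so that at a point $(q,0)$ it reduces to $2(U_2\partial_{q^1}-U_1\partial_{q^2})$. At such a point $A_0=-\nabla U\cdot\partial_p$ and $Y_1$ has momentum part $(-U_2,U_1)$, two nonzero mutually orthogonal vectors spanning the momentum plane, while $Y_2$ is a nonzero vector in the position plane orthogonal to $\nabla U$. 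Since $T_{(q,0)}\Sigma_a=\{(v_q,v_p):\nabla U(q)\cdot v_q=0\}$, the vectors $A_0,Y_1,Y_2$ span it.

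Putting the two cases together, at every point of $\Sigma_a$ the vector fields $X$, $[X,A_0]$, $[X,[X,A_0]]=-A_0$ and $[A_0,[X,A_0]]$ span the tangent space, and Hörmander's theorem yields hypoellipticity of $L$ on $\Sigma_a$. The only delicate point in the argument is the set $\{p=0\}$: there one bracket does not suffice and one must both take a second commutator and invoke the strict convexity of $U$ to guarantee $\nabla U\neq0$ on the energy shell; everything else is routine bookkeeping of vector-field commutators.
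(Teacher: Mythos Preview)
Your proof is correct. Both you and the paper verify H\"ormander's bracket condition by computing $[X,A_0]$ and $[[X,A_0],A_0]$ and checking that, together with $X$ and $A_0$, these span $T\Sigma_a$ at every point; the identity $D^2U\,J+J\,D^2U=(\Delta U)J$ underlying your formula for $Y_2$ is exactly what the paper records (specialized to the radial form of $U$) as $\zeta(q)J$.

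The organization differs slightly. You split into cases: on $\{p\neq0\}$ the three fields $X,A_0,Y_1$ already span, while on $\{p=0\}$ you invoke the second bracket $Y_2$. The paper instead extracts from $[C_1,B]$ the field $C_2=2\oU'(|q|^2)\,Jq\cdot\partial_q$ and exhibits a single global orthonormal frame $(B,C_1,C_2-C_0)/\cN$ of $T\Sigma_a$, with $\cN=(|p|^2+|\nabla U(q)|^2)^{1/2}$, valid uniformly at all points without case analysis. Your route is a bit quicker for this lemma taken in isolation, and uses the radial hypothesis only to guarantee $\nabla U\neq0$ on the shell. The paper's packaging, however, is not incidental: the fields $C_0,C_1,C_2,B$ and the commutation relations among them are precisely what feed into the hypocoercivity estimates of Appendix~B and the $\scH^1$ norm of Section~\ref{sec:hypo}, so the explicit global frame is set up here for reuse later.
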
 
\begin{proof}
We must study of the Lie algebra generated by the vector fields
\begin{equation}
  \label{eq:16}
  \begin{split}
    C_0 &= X = Jp \cdot \partial_p,\\
    B & = A_0 = p\cdot \partial_q - \nabla U(q) \cdot \partial_p .
  \end{split}
\end{equation}
We obtain
\begin{equation}
\begin{split}
  C_1 = : [C_0, B] =  Jp\cdot \partial_q + J\nabla U(q) \cdot \partial_p .
\end{split}\label{eq:17}
\end{equation}
\begin{equation}
  \label{eq:19}
  [C_1, B] = 2J \nabla U(q) \cdot \partial_q - \{D^2 U (q) J + J
  D^2U(q)\} p\cdot \partial_p  .
\end{equation}
By our assumption on $U$
\begin{equation}
  \label{eq:20}
  D^2 U(q) = 4 \oU''(q) \,q\otimes q +  2\oU'(q) \Id .
\end{equation}
Since
\begin{equation*}
J (q\otimes q) +  (q\otimes q) J = |q|^2J ,
\end{equation*}
we have
\begin{equation*}
  D^2 U (q) J + J D^2U(q) = 4\{\oU''(q)|q|^2+\oU'\} J=:\zeta(q)J .
\end{equation*}
Finally we define $C_2$ by the relation
\begin{equation*}
   [C_1, B] = 4 \oU'(q)Jq \cdot \partial_q - 4\{\oU''(q)|q|^2+\oU'\}  Jp \cdot \partial_p
   = : 2 C_2 -\zeta(q)  C_0  .
\end{equation*}
Observe that, setting $\cN= (|p|^2 + |\nabla U(q)|^2)^{1/2}$, the vectors
\begin{equation}\label{eq:ort-base}
  Z_1 = \frac {C_1}{\cN} , \qquad  Z_0 = \frac {B}{\cN},  
  \qquad Z_2 = \frac {C_2 - C_0}{\cN}
\end{equation}
form an orthonormal base of the tangent space of $\Sigma_{a}$, hence the Lie Algera generated by $\{C_0, B\}$ spans the space of any energy shell $\Sigma_a$. 
This concludes the proof of the hypoellipticity of $L$.
\end{proof}
By the above results trivially follows the claimed hypoellipticity.
\begin{lem}\label{lem:hypo-many}
For each $\Lambda\subset \bZ^d$,  $a_i>0$, $i\in\Lambda$ the operator $L_{0,\Lambda}$ is hypoelliptic on $\Sigma_{\underline a}$.
\end{lem}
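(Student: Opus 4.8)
The plan is to reduce the multi-site statement to the single-site result in Lemma~\ref{lem:hypoell} by exploiting the product structure of the uncoupled generator. Recall that $L_{0,\Lambda}=\sum_{i\in\Lambda}L_{0,\{i\}}$, where each summand acts only on the variables $(q_i,p_i)$. Since $L_{0,\Lambda}$ conserves every individual energy $\mc E^0_i$, it restricts to a well-defined second-order operator on the product surface $\Sigma_{\underline a}=\bigtime_{i\in\Lambda}\Sigma_{a_i}$. The noise part contributes $\sigma^2\sum_i X_i^2$ and the Hamiltonian part contributes $\sum_i A_{0,\{i\}}$, so the family of first-order fields whose Lie algebra we must analyze is $\{X_i, A_{0,\{i\}}\}_{i\in\Lambda}$.

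The key step is to note that brackets between fields carrying different site-indices vanish: $[X_i,X_j]=[X_i,A_{0,\{j\}}]=[A_{0,\{i\}},A_{0,\{j\}}]=0$ whenever $i\ne j$, because the fields act on disjoint sets of variables. Hence the Lie algebra generated by $\{X_i,A_{0,\{i\}}\}_{i\in\Lambda}$ is the direct sum over $i\in\Lambda$ of the Lie algebras generated by $\{X_i,A_{0,\{i\}}\}$ on the single-site factor. By Lemma~\ref{lem:hypoell}, the $i$-th summand spans the (three-dimensional) tangent space $T\Sigma_{a_i}$ at every point of $\Sigma_{a_i}$; the orthonormal frame $\{Z_0,Z_1,Z_2\}$ exhibited in \eqref{eq:ort-base} does the job site by site. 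Taking the direct sum, the generated Lie algebra spans $\bigoplus_{i\in\Lambda}T\Sigma_{a_i}=T\Sigma_{\underline a}$ at every point of $\Sigma_{\underline a}$. Therefore Hörmander's bracket condition holds on $\Sigma_{\underline a}$, and $L_{0,\Lambda}$ is hypoelliptic there.

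I do not expect a genuine obstacle here: the statement is an essentially formal consequence of the single-site case together with the factorization of the dynamics, which is why the excerpt calls it a trivial corollary. The only point requiring a line of care is checking that the Hörmander condition for the operator $L_{0,\Lambda}=\sigma^2\sum_iX_i^2+\sum_iA_{0,\{i\}}$ on the manifold $\Sigma_{\underline a}$ is indeed the condition that the fields $X_i$ together with $A_{0,\{i\}}$ (the drift) generate the full tangent space, and that this is inherited coordinatewise from the factors; one also notes that $\Sigma_{\underline a}$ is a smooth manifold precisely because each $\Sigma_{a_i}$ is (the energies are strictly positive, so $\nabla\mc E^0_i\ne0$). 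With these remarks the proof is complete.

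\begin{proof}
The generator of the uncoupled dynamics factorizes as $L_{0,\Lambda}=\sum_{i\in\Lambda}L_{0,\{i\}}$, where $L_{0,\{i\}}=\sigma^2 X_i^2+A_{0,\{i\}}$ acts only on the variables $(q_i,p_i)$. Since each $L_{0,\{i\}}$ preserves the energy $\mc E^0_i$, the operator $L_{0,\Lambda}$ restricts to a smooth second-order operator on the product surface $\Sigma_{\underline a}=\bigtime_{i\in\Lambda}\Sigma_{a_i}$, which is a smooth manifold because each $a_i>0$.

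To verify H\"ormander's bracket condition on $\Sigma_{\underline a}$ we must show that the fields $\{X_i, A_{0,\{i\}}\}_{i\in\Lambda}$ and their iterated brackets span $T\Sigma_{\underline a}$ at every point. Since fields associated to distinct sites act on disjoint sets of variables, all brackets mixing two different site indices vanish, so the Lie algebra generated by $\{X_i, A_{0,\{i\}}\}_{i\in\Lambda}$ equals the direct sum over $i\in\Lambda$ of the Lie algebras generated by $\{X_i, A_{0,\{i\}}\}$ on the $i$-th single-site factor. By Lemma~\ref{lem:hypoell} (and the explicit orthonormal frame \eqref{eq:ort-base}), the latter spans $T\Sigma_{a_i}$ at every point of $\Sigma_{a_i}$. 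Taking the direct sum over $i$, the generated Lie algebra spans $\bigoplus_{i\in\Lambda}T\Sigma_{a_i}=T\Sigma_{\underline a}$ at every point of $\Sigma_{\underline a}$. Hence $L_{0,\Lambda}$ is hypoelliptic on $\Sigma_{\underline a}$.
\end{proof}
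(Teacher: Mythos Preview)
Your argument is essentially the one the paper intends (the paper gives no proof beyond ``trivially follows''), and the product-structure reduction to Lemma~\ref{lem:hypoell} is the right idea. There is, however, one small point to tighten. The operator $L_{0,\Lambda}=\sigma^2\sum_i X_i^2+\sum_i A_{0,\{i\}}$ has a \emph{single} drift field $B=\sum_{i\in\Lambda}A_{0,\{i\}}$, so H\"ormander's condition concerns the Lie algebra generated by $\{X_i\}_{i\in\Lambda}\cup\{B\}$, not by $\{X_i,A_{0,\{i\}}\}_{i\in\Lambda}$ as you write. A priori the latter is a larger generating set, so your spanning claim does not immediately yield the required one. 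The fix is one line: since fields at different sites commute, $[X_i,B]=[X_i,A_{0,\{i\}}]=C_{1,i}$, and then by \eqref{eq:Com7} $[X_i,C_{1,i}]=-A_{0,\{i\}}$, so each individual $A_{0,\{i\}}$ already lies in $\operatorname{Lie}(\{X_i\}_i,B)$. After this observation your direct-sum argument goes through verbatim.
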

By H\"ormander theorem \cite{horm} hypoellipticity implies that if
there exists a solution  $u$ for the equation  
  \begin{equation}\label{eq:pois-g}
  L_{0,\Lambda} u = g
\end{equation}
where $g$ is a $\cC^\infty$ function when restricted to any energy shell, then also $u$ is $\cC^\infty$ when restricted to any energy shell.

\section{Hypocoercivity}\label{sec:hypo}

We will prove the existence of the solution of equation  \eqref{poisson} by proving a spectral gap for the generator $L_0$ on each energy shell $\Sigma_{\underline a}$ in a proper Hilbert space.
More precisely we consider the Hilbert spaces determined by the scalar products
\begin{equation}\label{eq:hilbert}
  \begin{split}
    \langle h,g\rangle_{\scH^1_{\underline a}}&:=|\Lambda|\langle
    h,g\rangle_{\underline a}+\sum_{l\in\Lambda}  \mathcal
    D_{\underline a,l} (h,g) 
    \\
    \mathcal D_{\underline a,l} (h,g) &:= \sum_{k=0}^2\langle
    C_{k,l}h,C_{k,l} g\rangle_{\underline a}+\langle B_lh,
    B_lg\rangle_{\underline a}.
  \end{split}
\end{equation}
where $\langle h, g\rangle_{\underline a}=\int_{\Sigma_{\underline a}}\overline{h} gd\mu_{\underline a}$ and the $C_{k,l}, B_l$ are the vector fields defined in section \ref{sec:single-site-dynam} relative to the particle $(q_l,p_l)$.

By a slight abuse of notations we will use $P^t_0$ to denote the strongly continuous semigroup generated by $L_0$ both in $L^2$ and $\scH^1_{\underline a}$ for each ${\underline a}$.

Note that the above norm is equivalent to the standard Sobolev space $\cH_{\underline a}$ on the Riemannian surface $\Sigma_{\underline a}$ where the Riemannian structure has been rescaled to have the diameter of each $\Sigma_{a_i}$ equal one independently of the $a_i$. More precisely, there exists $c>0$ such that
\begin{equation}\label{eq:norm-equiv}
c\|f\|_{\cH_{\underline a}}\leq \|f\|_{\scH_{\underline a}}\leq c^{-1}|\Lambda|\,\|f\|_{\cH_{\underline a}}
\end{equation}
Accordingly, Theorem \ref{thm:gap} is a direct consequence of the following.
 
\begin{prop}\label{lem:gap}
The semigroup $P^t_0$ is contractive in $L^2$.
In addition, for each $\Lambda\subset \bR^d$, $|\Lambda|<\infty$, there exists $C,\tau>0$ such that for all $\underline{a}\in(0,\infty)^\Lambda$ and smooth function $f$, such that $\mu_{\underline a}(f)=0$ , holds 
\[
\| P^t_{0}f\|_{\scH^1_{\underline a} }\leq Ce^{-\tau t}\|f\|_{\scH^1_{\underline a}}.
\]
\end{prop}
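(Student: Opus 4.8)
\textbf{Proof plan for Proposition \ref{lem:gap}.}

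The plan is to implement Villani's hypocoercivity scheme adapted to the manifold setting of the energy shell $\Sigma_{\underline a}$. The first step is the $L^2$ contraction: writing $L_0 = A_0 + \sigma^2 S$ with $A_0 = \sum_l B_l$ the (skew-adjoint on $L^2(\mu_{\underline a})$) Hamiltonian part and $S = \sum_l X_l^2 = \sum_l C_{0,l}^2$ the (nonpositive, self-adjoint) noise part, one has $\langle L_0 f, f\rangle_{\underline a} = \sigma^2 \langle Sf,f\rangle_{\underline a} = -\sigma^2 \sum_l \|C_{0,l} f\|_{\underline a}^2 \le 0$, which gives contractivity of $P^t_0$ in $L^2(\Sigma_{\underline a},\mu_{\underline a})$ immediately, and also shows that $\mu_{\underline a}$ is invariant (using that $A_0$ and $S$ both preserve the microcanonical measure, the latter because $X_l$ is divergence-free for the rotation-invariant $p$-marginal).

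Next, for the gap in $\scH^1_{\underline a}$, I would build a modified functional of the form
\begin{equation*}
  \mathcal F(f) = \|f\|_{\underline a}^2 + \sum_l \Big( a_l \|C_{0,l} f\|_{\underline a}^2 + b_l \|C_{1,l} f\|_{\underline a}^2 + c_l \|C_{2,l} f\|_{\underline a}^2 + d_l \|B_l f\|_{\underline a}^2 \Big) + \sum_l \big( \text{mixed terms} \big),
\end{equation*}
where the mixed cross-terms (of the shape $\langle C_{0,l} f, C_{1,l} f\rangle$, $\langle C_{1,l} f, C_{2,l} f\rangle$, etc.) are weighted by small constants, chosen so that $\mathcal F$ is equivalent to $\|f\|_{\scH^1_{\underline a}}^2$ and so that $\frac{d}{dt}\mathcal F(P^t_0 f) \le -\kappa \mathcal F(P^t_0 f)$ for $f$ of zero $\mu_{\underline a}$-average. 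The key algebraic input is the commutator structure computed in Section \ref{sec:single-site-dynam}: $[C_{0,l}, B_l] = C_{1,l}$, $[C_{1,l}, B_l] = 2C_{2,l} - \zeta(q_l) C_{0,l}$, and $\{C_{0,l}, B_l, C_{1,l}, C_{2,l}\}$ span the tangent space of $\Sigma_{\underline a}$. Differentiating each term and using these commutation relations, the dissipation $-\sigma^2 \sum_l \|C_{0,l} f\|^2$ coming from the noise is transferred, through the mixed terms, first to control $\|C_{1,l} f\|^2$, then to $\|C_{2,l} f\|^2$ and $\|B_l f\|^2$ — a standard cascade — while the commutator $[C_{0,l}, C_{k,l'}]$ and $[B_l, C_{k,l'}]$ error terms, along with the $q_l$-dependent coefficients $\zeta(q_l), \oU'(q_l), \oU''(q_l)$, are absorbed using $c^{-1}\le \oU' \le c$ and the smoothness bounds on $U$; the Poincaré inequality on each fixed shell $\Sigma_{a_l}$ (valid because $L_0$ is hypoelliptic there, by Lemma \ref{lem:hypo-many}, hence irreducible, so $0$ is a simple eigenvalue) closes the estimate by controlling $\|f\|^2$ (minus its average) by $\sum_k \|C_{k,l} f\|^2 + \|B_l f\|^2$.

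The main obstacle — and the reason the statement insists on the constants $C,\tau$ being \emph{uniform} in $\underline a \in (0,\infty)^\Lambda$ and in $\sigma\in(0,1)$ — is controlling the estimates \emph{near low energies} $a_l \to 0$, where the shell $\Sigma_{a_l}$ degenerates to a point and the naive Poincaré constant blows up. This is precisely why the Riemannian structure is rescaled so that each $\Sigma_{a_i}$ has diameter one (see \eqref{eq:norm-equiv}): in the rescaled metric the vector fields $C_{k,l}$ are replaced by $C_{k,l}/\mathcal N_l$ with $\mathcal N_l = (|p_l|^2 + |\nabla U(q_l)|^2)^{1/2}$, so one must track how the normalization $\mathcal N_l$ and its derivatives interact with the commutators, and verify that after rescaling all coefficients appearing in the cascade are bounded above and below uniformly in $a_l$. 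I would handle this by working throughout with the orthonormal frame $Z_0, Z_1, Z_2$ of \eqref{eq:ort-base}, re-expressing $L_0$, the commutators, and $\mathcal F$ in that frame, and checking — using the explicit form $D^2 U(q) = 4\oU''(q)\,q\otimes q + 2\oU'(q)\Id$ and the convexity bound on $\oU'$ — that the structure functions are smooth and scale-invariant; the dependence on $\sigma$ then enters only through an overall factor in the dissipation, and since $\sigma\in(0,1)$ the gap $\tau$ can be taken independent of $\sigma$ after possibly rescaling the weights $a_l,\dots,d_l$ by powers of $\sigma$. Finiteness of $\Lambda$ enters only through the $|\Lambda|$ prefactor in \eqref{eq:hilbert} and the finitely many cross-particle commutator terms, which are themselves controlled by the single-site estimates since $L_{0,\Lambda} = \sum_l L_{0,\{l\}}$ acts as a sum of commuting single-site generators.
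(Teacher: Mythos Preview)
Your plan is correct and matches the paper's argument closely: $L^2$ contraction from skew-adjointness of $A_0$ plus nonpositivity of $S$; a modified $\scH^1$-equivalent quadratic form with cross terms $\langle C_{0,l}f,C_{1,l}f\rangle$, $\langle C_{1,l}f,C_{2,l}f\rangle$ (this is exactly the bilinear form $\lv\cdot,\cdot\rv$ of Appendix~\ref{sec:hypoest}); the commutator cascade $[C_0,B]=C_1$, $[C_1,B]=2C_2+\zeta C_0$, $[C_2,B]=R_3$ to propagate dissipation; a Poincar\'e inequality to recover $\|f\|^2$; and the product structure $L_0=\sum_l L_{\{l\}}$ to pass from one site to $\Lambda$.

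The one tactical point where you diverge from the paper is the handling of uniformity in $\underline a$. You propose to pass to the orthonormal frame $Z_0,Z_1,Z_2$ and redo the cascade there; this works in principle but the normalization $\mathcal N$ is not constant on $\Sigma_a$, so the commutators of the $Z_i$ acquire extra structure-function terms (derivatives of $\mathcal N$) that have to be tracked. The paper avoids this by keeping the \emph{unnormalized} fields $C_0,C_1,C_2,B$ throughout: the hypocoercivity Lemma~\ref{lem:hypo} is stated abstractly, depending only on the commutation relations \eqref{eq:Com4}--\eqref{eq:Com7} and the \emph{boundedness} of the scalar coefficients $\zeta(q)$, $\rho$, $\beta$ and of $\|R_3\|$ (checked in Appendix~\ref{sec:comm} using only $c^{-1}\le\oU'\le c$), and these bounds are automatically uniform in $a$. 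The uniform Poincar\'e constant (Lemma~\ref{lem:poinc}) is then obtained separately, via the change of variables $\Psi$ of Section~\ref{sec:coordinate} that maps every $\Sigma_a$ to the fixed sphere $S^3$. This division of labor is cleaner than renormalizing the frame, though your route would also go through.
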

\begin{proof}
The contractivity in $L^2$ follows by
\[
\frac d{dt}\|P_0^t f\|_{\underline a}^2\leq -2\sum_L\langle C_{0, l}P_0^tf,C_{0, l}P_0^tf\rangle_{\underline a}  \leq 0.
\]

To prove the second part of the theorem we use the results of Appendix \ref{sec:hypoest}. 
Note that for all smooth $h,g,\vf$ we can write
\[
m_0(\vf(\underline \cE)\mu_{\underline \cE}(h C_0g))=m_0(\vf(\underline \cE)h C_0g)=-m_0(\vf(\underline \cE)g C_0h),
\]
hence it must be $\mu_{\underline a}(h C_0g)=-\mu_{\underline a}(C_0h
\cdot g) $, and the same for $B$. We can thus use  Lemma
\ref{lem:hypo} applied to, for each $l\in\Lambda$, a scalar product
$\lv\cdot,\cdot\rv_{{\underline a},l}$ defined in \eqref{eq:twoangles} using the operators $C_{k,l},B_l$. By Lemma \ref{lem:positive} such a scalar product is equivalent to the one 
\[
\langle h,g\rangle_{\scH^1_{{\underline a},l}}:=\langle h,g\rangle_{\underline a}+ \mathcal D_{\underline a,l} (h,g).
\]

Then Lemma \ref{lem:hypo} states, that
\begin{equation}\label{eq:gap-l}
\lv h, L_{\{l\}} h\rv_{{\underline a},l}\leq -\tau\sigma^2 \left(
  \mathcal  D_{\underline a,l} (h,h) +  \mathcal D_{\underline a,l}
  (C_0h,C_0h)\right) \le  -\tau\sigma^2 
  \mathcal D_{\underline a,l} (h,h) .  
\end{equation}
where $L_{\{l\}}$ is the generator of the dynamics of the isolated
atom $l$, i.e. $L_0 = \sum_{l\in\Lambda} L_{\{l\}}$. 

The last piece of information we need is given by the following
Poincare inequality. 

\begin{lem} \label{lem:poinc} There exist a constant $K_P>0$ such that for each $\underline a$, if $f\in\scH^1_{\underline a}$, such that $\mu_{\underline a}(f)=0$, then 
\[
\|f\|_{\underline a}^2 \leq K_P \sum_{l\in\Lambda} \mathcal
D_{\underline a,l} (f,f).  
\]
\end{lem}
\begin{proof}
By the change of variable introduced in section~\ref{sec:coordinate}
one can use the Poincar\'e inequality for the sphere, for one
particle. After that 
\[
\begin{split}
  \|f\|_{\underline a}^2 &\leq \|f-\mu_{a_l}(f)\|_{\underline a}^2+
  \|\mu_{a_l}(f)\|_{\underline a}^2 \\
  & \leq K_P D_{\underline a,l} (f,f)
  +\|\mu_{a_l}(f)-\mu_{a_l,a_j}(f)\|_{\underline a}^2+ \|
  \mu_{a_l,a_j}(f)\|_{\underline a}^2
\end{split}
 \]
iterating the argument yields the result.
\end{proof}

It follows from (\ref{eq:gap-l}) and Lemma \ref{lem:poinc} that
\begin{equation}
  \label{eq:73}
  \begin{split}
    \lv h,L_0 h\rv &= \sum_{l,l'} \lv h,L_{\{l'\}} h\rv_{\underline
      a,l} \le \sum_{l} \lv h,L_{\{l\}} h\rv_{\underline a,l}-
       \sum_{l\neq l'} \lv C_{0,l'}h,C_{0,l'} h\rv_{\underline a,l} \\
       &\le \sum_{l} \lv h,L_{\{l\}} h\rv_{\underline a,l}\le 
       -\tau\sigma^2
       \sum_{l}  \cD_{\underline a,l} (h,h) \\ 
    &\le -\frac{\tau\sigma^2}{1+ K_P|\Lambda|} \|h\|^2_{\scH^1_{\underline a}}
  \end{split}
\end{equation}

Accordingly, for each $\underline a$ and $h\in\scH^1_{\underline a}$, such that $\mu_{\underline a}(h)=0$, we have
\[
\begin{split}
\frac d{dt}\lv P_0^th,P_0^th\rv_{\underline a}&=2\lv P_0^th,L_0P_0^th\rv_{\underline a}\\
&\leq -\frac{2\tau\sigma^2}{1+K_P|\Lambda|} \|P^t_0h\|^2_{\scH^1_{\underline a}}\\
&\leq - 2\tau_1\sigma^2 \lv P_0^th,P_0^th\rv_{\underline a},
\end{split}
\]
where, in the last line, we have used first \eqref{eq:gap-l} and then Lemma \ref{lem:positive}.
This means that
\[
\lv P_0^th,P_0^th\rv_{\underline a}\leq \lv h,h\rv_{\underline a}
e^{-2\tau_1\sigma^2 t} 
\]
and, by the equivalence of the norms, there exists $C, \tau'>0$ such
that 
\begin{equation}\label{eq:coercive}
\|P_0^th\|_{\scH^1_{\underline a}}\leq Ce^{-\tau'\sigma^2 t}\|h\|_{\scH^1_{\underline a}}.
\end{equation}

\end{proof}

\section{Global regularity}\label{sec:regular}
Next, we need regularity of the solution of \eqref{eq:pois-g} also in directions not tangent to the energy surfaces, provided $g$ is smooth. Again we work first with only one particle. 

\subsection{The transversal direction}
A natural direction would be given by the normal vector to the energy surfaces
\begin{equation}
  \label{eq:22}
  Z_3 = \frac{ p\cdot \partial_p + \nabla U(q) \cdot \partial_q}{\mathcal N},
\end{equation}
but, due to the anharmonicity of the potential $U$, it turns out to be more useful to work with the vector field
\begin{equation}\label{eq:auto}
Y:= \frac 1{2\sqrt{\cE^0(q,p)}}\left\{\frac 12p\cdot\partial_{p} +
\frac{U(q)}{\nabla U(q)\cdot q}q\cdot\partial_{q} \right\}.
\end{equation}
Note that,  $Y$ is a smooth vector field away from zero. Observe that in the harmonic case 
($\oU(r) =\frac 12 r$), $Y$ is parallel to the normal vector field $Z_3$.

The reason why we consider $Y$ is the following.
A direct computation shows that $Y\cE^0=\frac 12 \sqrt{\cE^0(q,p)}$, hence the vector field is transversal to the energy surface. In addition,
\begin{equation}\label{eq:Ycom}
\begin{split}
&[Y,C_0]=0,\\
&[Y,L]\cE^0=-\frac 12 L\sqrt{\cE^0(q,p)}=0.
\end{split}
\end{equation}
That is, the commutators are vector fields tangent to the constant
energy surface. Note that since $U''(0) >0$, we can write
 $\frac{U(q)}{q\cdot \nabla U(q)}=\frac
12+\kappa(|q|^2)$, for a smooth function $\kappa$ such that 
 $\kappa(0)=0$. An explicit computation yields 
\begin{equation}\label{eq:long}
\begin{split}
[L,Y]=  \frac{4\left(\frac 12+\kappa\right) \oU'' q^2+2\kappa \oU'}{2\sqrt{\cE^0(q,p)}}q\cdot\partial_p + \frac 1{2\sqrt{\cE^0(q,p)}}p\cdot \left[ 2\kappa'q\otimes q+\kappa\Id  \right] \partial_q  
\end{split}
\end{equation}
\begin{rem} Note that the vector field $(\cE^0(q,p))^{-\frac 12}[L,Y]$ is smooth on all $\bR^{2\nu}$, in particular even at zero. This fact will play a crucial role in the following.
\end{rem}

\subsection{Transversal regularity}
The basic idea to prove regularity is to notice that if $Lu=g$,
then one expects that $LYu=[Y,L]u+Yg$. Unfortunately, we have only
$L^2$ bounds for the right hand side of the above equations, in
particular we do not know if it belongs to $\scH^1$. Hence, a priori,
we do not know if such an equation has a solution in $\scH^1$. To
overcome such a difficulty several direct strategies are possible. For
example one could try to prove a spectral gap in Sobolev spaces of
higher regularity or to prove that the semigroup maps $L^2$ functions
in $\scH^1$ functions. Unfortunately, such results (even if probably
true) are not so easy to prove, in particular the related algebra
becomes quickly very messy. Due to this state of affair we take a bit
more indirect route that, without proving explicit bounds, suffices to
prove the smoothness. To this end it is convenient to work in
coordinates in which all the energy surfaces can be naturally
identified.

It is then natural to transform equation \eqref{poisson} in the following coordinates.
Let $S^3:=\{x\in\bR^4\;:\;\|x\|=1\}$ and $M=\bR\times S^3\subset \bR\times \bR^2\times\bR^2$, $M_+=(0,\infty)\times S^3\subset M$. Let $\Psi: \bR^{4|\Lambda|}\setminus\{0\}\to M_+^{|\Lambda|}\subset \bR^{5|\Lambda|}$ be defined by
\begin{equation}\label{eq:regularize}
\Psi(q,p)_i=\left(\sqrt{\frac{p_i^2}2+U(q_i)}, 
\frac{q_i\sqrt{U(q_i)}}{|q_i|\sqrt{\frac{p_i^2}2+U(q_i)}},
\frac{p_i}{\sqrt{{p_i^2} + 2U(q_i)}}\right)=:(r_i,\xi_i,\eta_i)
\end{equation}
The needed properties of this change of variables are detailed in section \ref{sec:coordinate}.

The first key observation is that the problem is now regularized at zero energy. Indeed,
\[
\begin{split}
\tilde j_{i,k}(\underline r,\underline \xi,\underline \eta)&=j_{i,k}\circ \Psi^{-1}(\underline r,\underline \xi,\underline \eta)\\
&=\frac 1{\sqrt 2}\nabla V\left(r_i\theta(r_i^2\xi_i^2)\xi_i-r_k\theta(r_k^2\xi_k^2)\xi_k\right)\cdot(r_i\eta_i+r_k\eta_k).
\end{split}
\]
\begin{rem}\label{rem:simmetry}
Note that $\tilde j_{i,k}$ extends naturally to a smooth function on $M^{\Lambda}$. Indeed if $r_i<0$, then we can set $\tilde j_{i,k}(\underline r,\underline \xi,\underline \eta)=\tilde j_{i,k}(\underline r',\underline \xi',\underline \eta')$ where $r_j=r_j'$, $\xi_j=\xi_j'$, $\eta_j=\eta_j'$ for all $j\neq i$ and $r_i=-r_i'$, $\xi_i=-\xi_i'$, $\eta_i=-\eta_i'$, and the same for $k$.
\end{rem}

Note that for each function $\tilde f\in\cC^\infty(M^{|\Lambda|},\bR)$ holds 
$C_0 (\tilde f\circ \Psi)=(\tilde C_0 \tilde f)\circ \Psi$. It follows that the equation $L_0f=g$ on $(\bR^4\setminus\{0\})^{|\Lambda|}$ is transformed in the equation $\tilde L_0\tilde f=\tilde g$ on $\Psi((\bR^4\setminus\{0\})^{|\Lambda|})\subset M$ where $\tilde L_0=\sum_l\sigma^2\tilde C_{0,l}^2+\tilde B_l$ and $\tilde f=f\circ\Psi^{-1}$, $\tilde g=g\circ\Psi^{-1}$.

It is then natural to study the equation on $M$
\begin{equation}\label{eq:poisson-reg}
\tilde L_{0}\tilde u_{i,k}=\tilde j_{i,k}.
\end{equation}
By the previous discussion the solution of \eqref{poisson} in $(\bR^{4}\setminus\{0\})^{\Lambda}$ is given by $u_{i,k}=\tilde u_{i,k}|_{M_+^{\Lambda}}\circ \Psi$.

The problem of the transversal smoothness is then reduced to studying the smoothness of $\tilde u_{i,k}$ in $r$ (see Lemma \ref{lem:regularize}). 

\begin{lem}\label{lem:diff-global} For each $i,k$, the functions $\tilde u_{i,k}\in\cC^\infty(M^{\Lambda},\bR)$.
\end{lem}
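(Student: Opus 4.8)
The plan is to bootstrap from the tangential regularity (already available from hypoellipticity, Lemma \ref{lem:hypo-many}, plus the spectral gap of Proposition \ref{lem:gap}) to full regularity on all of $M^{\Lambda}$ by controlling the single transversal direction $r$ one particle at a time. First I would record the structural facts: $\tilde L_0 = \sum_l (\sigma^2 \tilde C_{0,l}^2 + \tilde B_l)$ acts particle-by-particle, and $\tilde j_{i,k}$ extends to a smooth function on all of $M^{\Lambda}$ (Remark \ref{rem:simmetry}), in particular across $r_i = 0$. By Proposition \ref{lem:gap} the equation \eqref{eq:poisson-reg} has a unique mean-zero solution $\tilde u_{i,k}$ on each shell $\{\underline r = \text{const}\}$, and by Lemma \ref{lem:hypo-many} this solution is $\cC^\infty$ along each shell, with all tangential Sobolev norms controlled uniformly by the norms of $\tilde j_{i,k}$ thanks to the $\underline a$-uniform spectral gap. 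What remains is smoothness in the $r$-variables.

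The key mechanism is the formal identity: if $\tilde L_0 \tilde u = \tilde j$, then differentiating in $r_l$ (equivalently, applying the transversal vector field $Y_l$ in the original coordinates), one gets $\tilde L_0 (\partial_{r_l}\tilde u) = \partial_{r_l}\tilde j + [\tilde L_0, \partial_{r_l}]\tilde u$. By \eqref{eq:Ycom} and the Remark following \eqref{eq:long}, the commutator $[\tilde L_0, \partial_{r_l}]$ (or rather the honest object $(\cE^0)^{-1/2}[L,Y]$ pulled back) is a vector field tangent to the energy shells, with coefficients that are smooth up to and including $r=0$. Hence its action on $\tilde u$ is controlled in tangential Sobolev norms by the tangential Sobolev norms of $\tilde u$ itself, which we already control. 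So the right-hand side of the equation for $\partial_{r_l}\tilde u$ lies in the same class of "smooth along shells, $L^2$ overall" functions, and we can again invoke the spectral gap to solve for $\partial_{r_l}\tilde u$ on each shell, getting tangential regularity for it. Since this does not by itself produce honest $r$-differentiability, I would instead phrase the argument via difference quotients in $r_l$: the difference quotient $\Delta_h^{r_l}\tilde u$ satisfies an equation whose right-hand side is bounded uniformly in $h$ in every tangential Sobolev norm, so by the uniform spectral gap $\Delta_h^{r_l}\tilde u$ is bounded uniformly in $h$ in every tangential Sobolev norm; this yields an $H^1$-in-$r_l$ bound, hence (after extracting a weak limit and identifying it as the solution of the limiting equation) genuine differentiability $\partial_{r_l}\tilde u \in \cC^\infty$ along shells. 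Iterating over all $l\in\Lambda$ and over all orders of $r$-derivatives, and combining with the already-known tangential smoothness, gives $\tilde u_{i,k}\in\cC^\infty(M^{\Lambda},\bR)$. The extension across $r_l=0$ is automatic here because everything — the coefficients of $\tilde L_0$, the source $\tilde j_{i,k}$, and the commutator coefficients — is smooth there, so no boundary layer appears; this is precisely the point of working in the $\Psi$-coordinates rather than with $Z_3$.

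The main obstacle is making the difference-quotient / weak-limit argument rigorous: one must be careful that (i) the cutoff-free solution $\tilde u_{i,k}$ and its difference quotients genuinely lie in the Hilbert spaces $\scH^1_{\underline r}$ with norms that are measurable and locally integrable in $\underline r$, so that one can integrate the shell-wise estimates and extract limits; (ii) the commutator coefficients, while smooth at $r=0$, may grow at large $r$, so one needs the polynomial growth control of $\tilde j$ and its derivatives together with the Gaussian tails of $m_0$ to close the $L^2$ estimates globally — this is where the precise control of the spectral gap "with respect to the energy" emphasized in the introduction is used, via the norm equivalence \eqref{eq:norm-equiv}; and (iii) uniqueness of the mean-zero solution on each shell must be used to identify the weak limit of $\Delta_h^{r_l}\tilde u$ with the solution of the differentiated equation, which is where Proposition \ref{lem:gap} enters a second time. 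Once these bookkeeping points are handled, the induction on the number of $r$-derivatives is routine.
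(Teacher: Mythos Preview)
Your proposal is correct and follows essentially the same route as the paper: view the Poisson equation as a family of shell-wise equations parametrized by $\underline r$, differentiate in $r_l$ using that $[\tilde Y_l,\tilde L_0]$ is tangent to the shells with coefficients smooth across $r=0$, invoke the uniform spectral gap of Proposition~\ref{lem:gap} together with hypoellipticity on each shell, and iterate. The paper's implementation is marginally more direct---it first solves the differentiated equation for a candidate $\tilde v_{i,k,l,\underline r}$ and then applies $\tilde L_0(\underline r+h)$ to the remainder $\tilde u_{\underline r+h}-\tilde u_{\underline r}-\sum_l\tilde v_l h_l$ to show it is $o(\|h\|)$ in $\scH^1$, bypassing your difference-quotient/weak-limit extraction---and your concern (ii) about growth at large $r$ and Gaussian tails is unnecessary, since the claimed $\cC^\infty$ regularity is a local statement and the paper works shell by shell with constants allowed to depend on $\underline r$.
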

\begin{proof}
We can consider $\tilde C_i, \tilde B^k_i$ as vector fields on $S^{3|\Lambda|}$.\footnote{See \eqref{eq:btilde} for the definition of $\tilde B^k_i$.} Accordingly, we can define for each $\underline r\in\bR^{\Lambda}$
\[
\tilde L_0(\underline r)=\sum_{i\in\Lambda}  \left\{\sigma^2\tilde C_{0,i}^2+ \frac{\sqrt 2}{\theta(r_i^2\xi_i^2)}\big [\tilde B^0_i +\Gamma(r_i^2\xi_i^2)\tilde B_i^1\big] \right\}.
\]
Then, setting $\tilde j_{i,k,\underline r}(\underline \xi,\underline\eta):=\tilde j_{i,k}(\underline r,\underline \xi,\underline\eta)$ and considering the equation on $S^{3|\Lambda|}$
\begin{equation}\label{eq:poisson-micro}
\tilde L_0(\underline r) \tilde u_{i,k,\underline r}=\tilde j_{i,k,\underline r}
\end{equation}
it follows $\tilde u_{i,k}(\underline r,\underline \xi,\underline\eta)=\tilde u_{i,k,\underline r}(\underline \xi,\underline\eta)$. By the previous section we know that, for each $\underline r$, $\tilde u_{i,k,\underline r}\in\cC^\infty(S^{3|\Lambda|},\bR)$, thus the differentiability boils down to show that the solution of \eqref{eq:poisson-micro} are differentiable with respect to the parameter $\underline r$.

Let us fix $\underline r$ and consider the equation\footnote{This is nothing else than the formal derivative of \eqref{eq:poisson-micro} with respect to $r_l$.}
\begin{equation}\label{eq:poisson-itera}
\tilde L_0(\underline r) \tilde v_{i,k, l,\underline r}=\tilde g_{i,k,l, \underline r}
\end{equation}
where $\tilde g_{i,k,\underline
  r}(\underline\xi,\underline\eta)=\left([\tilde Y_l, \tilde L_0]
  \tilde u_{i,k}\right)(\underline
r,\underline\xi,\underline\eta)+(\tilde Y_l\tilde j_{i,k})(\underline
r,\underline\xi,\underline\eta)$. Clearly $\tilde g_{i,k,l,\underline
  r}\in\cC^\infty(S^{3|\Lambda|},\bR)$ for each choice of $i,k,l,\underline r$,
thus $\tilde v_{i,k,l,\underline r}\in\cC^\infty(S^{3|\Lambda|},\bR)$. We claim
that $\partial_{r_l}\tilde u_{i,k}=\tilde v_{i,k,l,\underline r}$, let
us prove it. 

For each, small, $h\in\bR^{\Lambda}$ we can write
\[
\begin{split}
\tilde L_0(\underline r+h)\left[ \tilde u_{i,k,\underline r+h}-\tilde u_{i,k,\underline r}-\sum_l \tilde v_{i,k, l,\underline r}h_l\right] =&\tilde j_{i,k,\underline r+h}-\tilde j_{i,k,\underline r}-\sum_l\tilde g_{i,k,l, \underline r}h_l\\
&-\left[\tilde L_0(\underline r+h) -\tilde L_0(\underline r)\right] \tilde u_{i,k,\underline r}.
\end{split}
\]
An explicit computation shows that the $\scH^1$ norm of the right hand
side is bounded by a constant (depending in an unknown manner form
$\underline r$) times $o(\|h\|)$. The by Proposition \ref{lem:gap} the
differentiability of $u_{i,k,\underline r}$ follows. 

Note that in the above argument the only relevant property of $\tilde j_{i,k,\underline r}$ is the smoothness on $S^{3|\Lambda|}$ of itself and of its derivatives with respect to $\tilde Y_l$. Since a direct computation shows that the same properties are enjoyed by $\tilde g_{i,k,l, \underline r}$,\footnote{The smoothness on $S^{3|\Lambda|}$ follows from the smoothness of $\tilde u_{i,k}$, the differentiability with respect to $\tilde Y_l$ follows by the smoothness of $\tilde j_{i,k,\underline r}$ and the fact that $\tilde Y_{l'}[\tilde Y_l, \tilde L_0] \tilde u_{i,k}=[\tilde Y_{l'},[\tilde Y_l, \tilde L_0] ] \tilde u_{i,k}+[\tilde Y_l, \tilde L_0]  \tilde v_{i,k,l',\underline r}$, since it is easy to check that, given any vector $Z$ tangent to $M^{|\Lambda|}$, $[\tilde Y_{l'},Z]$ is still tangent to $M$.} the Lemma follows by iterating the above argument.
\end{proof}

\begin{lem}\label{lem:l2loc} It holds true $L_*u_{i,k}\in L^2_{\text{loc}}(\bR^8,m_0)$.
\end{lem}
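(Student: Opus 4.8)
The plan is to reduce the statement to a local integrability estimate near the zero--energy set and then read it off from the regularity in the coordinates \eqref{eq:regularize}. First I would record, using \eqref{eq:Lstar} and the fact that $u_{i,k}$ depends only on the variables $(q_i,p_i,q_k,p_k)$ (so that $\partial_{p_l}u_{i,k}=0$ unless $l\in\{i,k\}$), that
\[
L_*u_{i,k}=\sum_{l':\,|l'-i|=1}\nabla V(q_i-q_{l'})\cdot\partial_{p_i}u_{i,k}
+\sum_{l':\,|l'-k|=1}\nabla V(q_k-q_{l'})\cdot\partial_{p_k}u_{i,k}.
\]
Since $m_0$ has a smooth, strictly positive density (so $L^2_{\text{loc}}(\bR^8,m_0)=L^2_{\text{loc}}(\bR^8,dqdp)$) and the coefficients $\nabla V(\cdot)$ are bounded on every compact set, it suffices to show $\partial_{p_i}u_{i,k},\partial_{p_k}u_{i,k}\in L^2_{\text{loc}}(\bR^8)$. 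By Lemma \ref{lem:diff-global} (and the smoothness of $\Psi$ on $\bR^4\setminus\{0\}$) these functions are $\cC^\infty$ off the Lebesgue-null set $B:=\{(q_i,p_i)=0\}\cup\{(q_k,p_k)=0\}=\{\cE^0_i=0\}\cup\{\cE^0_k=0\}$, so only integrability near $B$ is at stake.

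The second step is to transfer to the regularized picture. Write $u_{i,k}=\tilde u_{i,k}\circ\Psi$ with $\tilde u_{i,k}\in\cC^\infty(M^\Lambda,\bR)$. On $\{|\omega|\le R\}$ one has $\cE^0_l(\omega)\le \frac12 R^2+\sup_{|q|\le R}U(q)=:(R')^2$ for all $l$, so $\Psi$ maps $\{|\omega|\le R\}\setminus B$ into the compact set $[0,R']^\Lambda\times(S^3)^\Lambda\subset M^\Lambda$, on which $\tilde u_{i,k}$ and all its first derivatives are bounded by some $C_R$. Applying the chain rule, $\partial_{p_i}u_{i,k}$ is controlled by $C_R$ times the size of the Jacobian entries $\partial_{p_i}r_i$, $\partial_{p_i}\xi_i$, $\partial_{p_i}\eta_i$. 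A direct inspection of \eqref{eq:regularize}, using $|q_i|^2\le c\,U(q_i)\le c\,\cE^0_i$ and $|p_i|^2\le 2\cE^0_i$, shows that $\partial_{p_i}r_i$ stays bounded while $\partial_{p_i}\xi_i$ and $\partial_{p_i}\eta_i$ are bounded by $C(\cE^0_i)^{-1/2}$ — the only unbounded contribution being the $p_i$-derivative of the common factor $(\cE^0_i)^{-1/2}$ appearing in $\xi_i$ and $\eta_i$ — with $C$ depending only on the constant $c$ of the assumptions on $U$. This gives the pointwise bound $|\partial_{p_i}u_{i,k}(\omega)|\le C_R\big(1+(\cE^0_i(\omega))^{-1/2}\big)$ on $\{|\omega|\le R\}$, and symmetrically $|\partial_{p_k}u_{i,k}(\omega)|\le C_R\big(1+(\cE^0_k(\omega))^{-1/2}\big)$.

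The last step is the integrability count. Strict convexity of $U$ (from $\oU(0)=0$, $\oU'\ge c^{-1}$) gives $\cE^0_i=\frac12|p_i|^2+U(q_i)\ge c_0\,|(q_i,p_i)|^2$ with $c_0=\min(\tfrac12,c^{-1})$, hence $(\cE^0_i)^{-1}\le c_0^{-1}|(q_i,p_i)|^{-2}$, and $\int_{\{|x|\le R\}\cap\bR^4}|x|^{-2}dx<\infty$; integrating over the remaining bounded variables $(q_k,p_k)$ shows $\partial_{p_i}u_{i,k}\in L^2_{\text{loc}}(\bR^8)$, and likewise $\partial_{p_k}u_{i,k}\in L^2_{\text{loc}}(\bR^8)$. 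With the reduction of the first paragraph this yields $L_*u_{i,k}\in L^2_{\text{loc}}(\bR^8,m_0)$. I do not expect a genuine obstacle: the only delicate point is the bookkeeping of the Jacobian of $\Psi$ near zero energy, which is precisely where the choice of coordinates \eqref{eq:regularize} — concentrating all the singularity in an explicit power of $\sqrt{\cE^0}$ — pays off. (If one wished to avoid the change of variables one could instead differentiate $L_0u_{i,k}=j_{i,k}$ along the transversal field $Y$ of \eqref{eq:auto} and invoke the $\scH^1$ spectral gap of Proposition \ref{lem:gap}, but that is essentially the content of Lemma \ref{lem:diff-global} already, so reusing that lemma is cleaner.)
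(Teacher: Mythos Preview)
Your proof is correct and follows essentially the same route as the paper's own argument: reduce to local $L^2$-integrability of $\partial_{p_i}u_{i,k}=\partial_{p_i}(\tilde u_{i,k}\circ\Psi)$, use the smoothness of $\tilde u_{i,k}$ from Lemma~\ref{lem:diff-global} to transfer the question to the Jacobian entries $\partial_{p_i}\Psi$, and then check that their only singularity (at $\cE^0_i=0$) is square-integrable in $\bR^4$. The paper compresses all of this into two sentences (``all we need to prove is that $\partial\Psi/\partial p_i\in L^2_{\text{loc}}$; the singularities at $0$ are $m_0$-integrable''), whereas you have written out explicitly the chain-rule bound $|\partial_{p_i}u_{i,k}|\le C_R(1+(\cE^0_i)^{-1/2})$ and the dimension count $\int_{\{|x|\le R\}\subset\bR^4}|x|^{-2}\,dx<\infty$ that justify it.
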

\begin{proof}
Recall the definition of $L_*$ given by (\ref{eq:Lstar}). Then all we
need to prove is that $\frac{\partial \Psi}{\partial p_i}$ is in
$L^2_{\text{loc}}(\bR^8,m_0)$. It follows from straightforward
calculations that the singularities at $0$ of  $\frac{\partial
  \Psi}{\partial p_i}$ are $m_0$ integrable.
\end{proof}

\section{Structure and regularity of $\gamma^2$ and $\alpha$}
\label{sec:regul-gamm-alpha}

Let $u = u_{1,2}$ the solution of the Poisson equation \eqref{poisson}. 
Then, by \eqref{eq:gamma} and \eqref{eq:canexpl}, we write 
\begin{equation}
  \label{eq:23}
  \begin{split}
    \gamma^2(a_1,a_2) &= \mu_{a_1,a_2} \left( j_{1,2} u_{1,2} \right) \\
   & = \frac{16\omega_4^2 a_1a_2}{ \cZ(a_1)\cZ(a_2)}
\int_{S^3\times S^3} d\sigma(\xi_1,\eta_1)d\sigma(\xi_2,\eta_2)\Omega(\sqrt{a},\xi,\eta). 
 \end{split}
\end{equation}
where $\sigma$ is the uniform probability measure on the sphere $S^3$ and
\begin{equation*}
\begin{split}
\Omega(r,\xi,\eta):=&\frac{(j_{1,2}u_{1,2})(\Psi^{-1}(r_1,\xi_1,\eta_1), \Psi^{-1}(r_2,\xi_2,\eta_2))}{\oU'(\rho(r_1^2\xi_1^2))\oU'(\rho(r_2^2\xi_2^2))}\\
=&\frac{(\tilde j_{1,2}\tilde u_{1,2})(r_1,\xi_1,\eta_1, r_2,\xi_2,\eta_2)}{\oU'(\rho(r_1^2\xi_1^2))\oU'(\rho(r_2^2\xi_2^2))}.
\end{split}
\end{equation*}

We have already seen that $\tilde u$ satisfies \eqref{eq:poisson-reg} and is a smooth function on $M^2$. Hence $\Omega\in\cC^\infty(M^2,\bR)$. By Remark \ref{rem:simmetry} and the structure of $\tilde L_0$ (see Lemma \ref{lem:regularize}), we have 
\[
(\tilde j_{1,2}\tilde u_{1,2})(-r_1,-\xi_1,-\eta_1, r_2,\xi_2,\eta_2)=(\tilde j_{1,2}\tilde u_{1,2})(r_1,\xi_1,\eta_1, r_2,\xi_2,\eta_2)
\]
and the same for the second coordinate. By the symmetry of the measure
$\sigma$ it follows then that the integral on the right hand side of
\eqref{eq:23} is an even smooth function of $\sqrt {a_1}, \sqrt{a_2}$,
hence a smooth function of $a_1,a_2$. This shows that
$\gamma^2\in\cC^\infty([0,\infty)^2,\bR)$. 

We are now in the position to prove the relation between $\alpha$ and $\gamma$:

\begin{lem}\label{lem:alphagamma}
For any nearest neighbor couple $\{i,k\}$:
\begin{equation}\label{F.2}
  e^{\mathcal U(\underline a)} 
    (\partial_{a_i}- \partial_{a_k} )  
   \left( e^{- \mathcal U(\underline a)}  \gamma^2(a_i,a_k)\right) 
 = \alpha(a_i,a_k).
\end{equation}
with $\mathcal U(\underline a) = - \sum_j \log {\mathcal Z}(a_j)$.
\end{lem}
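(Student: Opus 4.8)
The plan is to express $\alpha$ via an integration by parts that converts the $L^2(m_0)$-adjoint of $L_*$ acting on $u_{i,k}$ into a derivative with respect to the energy parameters, and then to recognize the resulting object as the right-hand side of \eqref{F.2}. By \eqref{eq:sarp-ave} we have $\alpha(a_i,a_k)=\mu_{\underline a}(L_* u_{i,k})$, and by \eqref{eq:gamma} we have $\gamma^2(a_i,a_k)=-\mu_{\underline a}(u_{i,k}j_{i,k})$, so the whole identity is a statement about how the microcanonical averages $\mu_{\underline a}$ interact with the perturbation operator $L_*$. The starting point is the observation that, for a function $g$ of the two particles $i,k$ only, the canonical average $m_0(g)$ factorizes as $\int \cZ(a_i)\cZ(a_k) e^{-a_i-a_k}\mu_{\underline a}(g)\,da_i da_k$ up to the normalization, so microcanonical averages are recovered by conditioning on the energies, and the densities $\cZ(a)e^{-a}=e^{-(a+\mathcal U(a))}$ carry the factor $e^{-\mathcal U}$ appearing in \eqref{F.2}.

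The key computational step is the following. Write $L_\ve = L_0 + \ve L_*$; since $m_0$ is \emph{not} invariant under $L_*$ but the Gibbs measure is invariant under $L_\ve$, one gets $m_0(L_* g) = m_0(g\, (L_*^{*,0} 1))$ where $L_*^{*,0}$ is the $L^2(m_0)$-adjoint; a direct computation using $L_*f = \sum_{|i-j|=1}\nabla V(q_i-q_j)\cdot\partial_{p_i}f$ and $\partial_{p_i} e^{-p_i^2/2} = -p_i e^{-p_i^2/2}$ shows that this adjoint contribution is precisely the derivative of the energy, so $m_0(L_*g) = m_0(\{H_*,\cdot\}\text{-type terms}) $ reorganizes into a total energy-derivative. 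More precisely I would compute $m_0(\varphi(\underline\cE^0)L_* u_{i,k})$ for a test function $\varphi$: integrating by parts in $p_i, p_k$ moves $L_*$ onto $\varphi(\underline\cE^0)$, and since $\partial_{p_i}\cE^0_i = p_i$ one obtains $\nabla V(q_i-q_k)\cdot p_i = j$-type terms hitting $\partial_{a_i}\varphi$, producing $m_0\big(\varphi \cdot[\text{stuff}]\big) + m_0\big((\partial_{a_i}-\partial_{a_k})\varphi \cdot [\nabla V\cdot p\ \text{-terms}]\big)$. Conditioning on the energies and using that $\mu_{\underline a}(j_{i,k})=0$ together with $\gamma^2 = -\mu_{\underline a}(u_{i,k}j_{i,k})$, the $(\partial_{a_i}-\partial_{a_k})$ term reproduces $\int\varphi\, e^{\mathcal U}(\partial_{a_i}-\partial_{a_k})(e^{-\mathcal U}\gamma^2)$ after one integration by parts in $a$, while the remaining term matches $\int \varphi\,\alpha$; since $\varphi$ is arbitrary, \eqref{F.2} follows. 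The manipulation is essentially the standard fluctuation–dissipation / ``gradient condition'' computation from the Ginzburg–Landau literature, adapted to the fact that here $\mu_{\underline a}$ is a microcanonical (not product-of-one-dimensional) measure.

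I expect the main obstacle to be bookkeeping rather than conceptual: one must justify the integration by parts in the $p$ variables (no boundary terms, Gaussian decay — fine) and, more delicately, the differentiation under the microcanonical conditional expectation with respect to the energy parameters $a_i$. This requires the smoothness and integrability of $u_{i,k}$ and of $\gamma^2$ up to zero energy, which are exactly the contents of Lemma \ref{lem:diff-global}, Lemma \ref{lem:l2loc}, and the smoothness of $\gamma^2$ established just above via the change of variables $\Psi$ and Remark \ref{rem:simmetry}; so the identity should be proved first formally and then legitimized by invoking these regularity results, taking care that the densities $\cZ(a)\sim a$ near $a=0$ do not create spurious boundary contributions in the $a$-integration by parts. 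A secondary subtlety is keeping track of which nearest-neighbor pairs $j$ contribute when $L_*$ is applied: by Lemma \ref{lem:zero} all terms with $l\notin\{i,k\}$ average to zero (antisymmetry of $\partial_{p_i}\bar u_i$ in $p_i$), so only the $\{i,k\}$ bond survives, which is what makes the right-hand side depend on $(a_i,a_k)$ alone and matches the single-bond structure of \eqref{F.2}.
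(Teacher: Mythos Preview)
Your overall strategy---test against $\varphi(\underline{\cE}^0)$, integrate by parts in $p_i,p_k$ under $m_0$, condition on the energies, then integrate by parts in $a$---is exactly the route the paper takes (the paper packages the $p$-integration by parts as Proposition~\ref{lem:int-part}, but the content is the same). However, there is a real gap in your outline.

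After the $p$-integration by parts you do \emph{not} obtain a single factor $(\partial_{a_i}-\partial_{a_k})\varphi$; what actually comes out is
\[
m_0\big(\varphi\, u_{i,k}\,\nabla V\!\cdot\!(p_i-p_k)\big)
\;-\;m_0\big((\partial_{a_i}\varphi)\,u_{i,k}\,\nabla V\!\cdot\! p_i\big)
\;+\;m_0\big((\partial_{a_k}\varphi)\,u_{i,k}\,\nabla V\!\cdot\! p_k\big),
\]
with two \emph{different} integrands $u\,\nabla V\!\cdot\! p_i$ and $u\,\nabla V\!\cdot\! p_k$ attached to $\partial_{a_i}\varphi$ and $\partial_{a_k}\varphi$. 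To collapse this into $(\partial_{a_i}-\partial_{a_k})$ acting on a single object---and to identify that object with $\gamma^2$---you need the identity
\[
\mu_{\underline a}\big(u_{i,k}\,\nabla V(q_i-q_k)\!\cdot\! p_i\big)
=\mu_{\underline a}\big(u_{i,k}\,\nabla V(q_i-q_k)\!\cdot\! p_k\big),
\]
which is \eqref{eq:ha} in the paper. This does \emph{not} follow from $\mu_{\underline a}(j_{i,k})=0$, nor from the $i\leftrightarrow k$ symmetry (that only gives the identity with the roles of $a_i,a_k$ swapped). The paper proves it by using the Poisson equation $L_0u_{i,k}=j_{i,k}$: since $(p_i-p_k)\!\cdot\!\nabla V(q_i-q_k)=-L_0^{*}V(q_i-q_k)$, one has
\[
\mu_{\underline a}\big(u_{i,k}(p_i-p_k)\!\cdot\!\nabla V\big)
=-\mu_{\underline a}\big(u_{i,k}L_0^{*}V\big)
=-\mu_{\underline a}\big(V\,L_0u_{i,k}\big)
=-\mu_{\underline a}(V\,j_{i,k})=0
\]
by the $p\to-p$ antisymmetry of $j_{i,k}$. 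Your outline never invokes $L_0u_{i,k}=j_{i,k}$; without it the computation does not close. Once you insert this step, the rest of your plan coincides with the paper's proof.
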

\begin{proof}
By Lemma \ref{lem:int-part} follows
\[
\mu_a(g\partial_p f)=e^{-\mathcal U}\partial_a\left\{e^{\mathcal U}\mu_a(gpf)\right\}
\]
provided $g$ does not depend on $p$. Thus, since $u_{i,k}$ is locally bounded,
\begin{equation*}
  \begin{split}
    \alpha(a_i, a_k) &= \mu_{\underline a} \left( \nabla V(q_i
      -q_k) (\partial_{p_i} u_{i,k} - \partial_{p_k}u_{i,k})\right)\\
   & =e^{-\mathcal U} \partial_{a_i}\left\{e^{\mathcal U}
    \mu_{\underline a} \left( \nabla V(q_i-q_k) {p_i}  
      u_{i,k}\right)\right\}\\
   &\quad -e^{-\mathcal U} \partial_{a_k}\left\{e^{\mathcal U}
    \mu_{\underline a} \left( \nabla V(q_i-q_k) {p_k}  
      u_{i,k}\right)\right\}.
\end{split}
\end{equation*}
To continue, notice that for each smooth $\vf$,
\[
\begin{split}
\int& e^{-\sum_l a_l}\prod_l\mathcal Z(a_l) \vf(\underline{a})\mu_{\underline{a}} \left( p_i\nabla V(q_i
      -q_k)u_{i,k} \right)\\
      &=\bE_{m_0}\left(\vf(\underline{\mathcal E}^0) p_i\nabla V(q_i
      -q_k)u_{i,k} \right)\\
       &= -\bE_{m_0}\left(\vf(\underline{\mathcal E}^0)u_{i,k}
         L_0^* V(q_i 
      -q_k) \right)+\bE_{m_0}\left(\vf(\underline{\mathcal E}^0) p_k\nabla V(q_i
      -q_k)u_{i,k} \right)\\
       &= -\bE_{m_0}\left(\vf(\underline{\mathcal E}^0) V(q_i
      -q_k) L_0u_{i,k}\right) +\bE_{m_0}\left(\vf(\underline{\mathcal
        E}^0) p_k\nabla V(q_i 
      -q_k)u_{i,k} \right)\\
      &= -\bE_{m_0}\left(\vf(\underline{\mathcal E}^0) V(q_i
      -q_k) j_{i,k}\right)+\bE_{m_0}\left(\vf(\underline{\mathcal
        E}^0) p_k\nabla V(q_i 
      -q_k)u_{i,k} \right)\\
      &=\int e^{-\sum_l a_l}\prod_l\mathcal Z(a_l) \vf(\underline{a})\mu_{\underline{a}} \left( p_k\nabla V(q_i
      -q_k)u_{i,k} \right),
\end{split}
\]      
where we used the antisymmetry of $j_{i,k}$ with respect to $p$.

Hence 
\begin{equation}\label{eq:ha}
\mu_{\underline{a}} \left( p_i\nabla V(q_i-q_k)u_{i,k} \right)=\mu_{\underline{a}} \left( p_k\nabla V(q_i-q_k)u_{i,k} \right)
\end{equation}
$m_0$-alpmost surely.
Accordingly,
\begin{equation*}
  \begin{split}
    \alpha(a_i, a_k) &=\frac 12 e^{-\mathcal U}\left( \partial_{a_i}-\partial_{a_k}\right)\left\{e^{\mathcal U}
    \mu_{\underline a} \left( \nabla V(q_i-q_k) ({p_i}+p_k)  
      u_{i,k}\right)\right\}\\
      &= e^{-\mathcal U}\left( \partial_{a_i}-\partial_{a_k}\right)\left\{e^{\mathcal U}
    \mu_{\underline a} \left( j_{ik}  u_{i,k}\right)\right\}.\\
  \end{split}
\end{equation*}
The result follows since
\begin{equation*}
  \begin{split}
     \mu_{\underline a} \left( j_{i,k} u_{i,k}\right) &=
     \mu_{\underline a} 
    \left( u_{i,k} L_0 u_{i,k}\right) =   \sigma^2 \mu_{\underline a}
    \left( u_{i,k} S u_{i,k}\right) \\
    &=  - \sigma^2 \sum_{j=i,k} 
    \mu_{\underline a} ((X_{j} u_{i,k} ) ^2) = 
     -  \gamma^2(a_i,a_k).
  \end{split}
\end{equation*}

\end{proof}

We can rewrite \eqref{F.2} as
\begin{equation}\label{eq:alpha-form}
\alpha(a_i,a_k)=   (\partial_{a_i}- \partial_{a_k} )
\gamma^2(a_i,a_k) +\left(\frac{\cZ'(a_i)}{\cZ(a_i)} -
     \frac{\cZ'(a_k)}{\cZ(a_k)}\right)\gamma^2(a_i,a_k) . 
\end{equation}
Since $\frac{\cZ'(a)}{\cZ(a)} \sim a^{-1}$,
the regularity of $\alpha$ follows from the one of $\gamma^2$, if
we can prove that $\gamma^2 \sim a_1 a_2$. 

\begin{lem}\label{lem:regular-ag} There exists $G\in\cC^\infty([0,\infty)^2,\bR)$ such that, for each $a_1,a_2\geq 0$

\[
\begin{split}
\gamma^2(a_1,a_2)&=a_1a_2G(a_1,a_2)\geq 0,
\end{split}
\]
and furthermore $\alpha(0, a) \ge 0 \text{ for all } a \ge 0$.
\end{lem}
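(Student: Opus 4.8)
The plan is to extract the factor $a_1a_2$ from $\gamma^2$ directly from the representation \eqref{eq:23}. Recall that we showed the integral
\[
I(\sqrt{a_1},\sqrt{a_2}) := \int_{S^3\times S^3} d\sigma(\xi_1,\eta_1)\,d\sigma(\xi_2,\eta_2)\,\Omega(\sqrt a,\xi,\eta)
\]
is a smooth even function of each $\sqrt{a_i}$, hence a smooth function $\tilde G(a_1,a_2)$. Thus $\gamma^2(a_1,a_2)=16\omega_4^2\,\tilde G(a_1,a_2)\,a_1a_2/(\cZ(a_1)\cZ(a_2))$, and since $\cZ(a)\sim a$ near $0$ with $\cZ$ smooth and positive on $(0,\infty)$, the function $a\mapsto a/\cZ(a)$ extends to a smooth positive function on $[0,\infty)$. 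Therefore $G(a_1,a_2):=16\omega_4^2\,\tilde G(a_1,a_2)\,\bigl(a_1/\cZ(a_1)\bigr)\bigl(a_2/\cZ(a_2)\bigr)$ is smooth on $[0,\infty)^2$ and $\gamma^2=a_1a_2\,G$. Nonnegativity is immediate from the second expression in \eqref{eq:gamma}, namely $\gamma^2(a_i,a_k)=\sigma^2\mu_{\underline a}\bigl((X_iu_{i,k})^2+(X_ku_{i,k})^2\bigr)\ge 0$, which also gives $G\ge0$ on the interior, and hence on all of $[0,\infty)^2$ by continuity.

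For the sign of $\alpha$ at $a_i=0$ I would use the identity \eqref{eq:alpha-form}. Setting $a_i=0$ and writing $\gamma^2=a_ia_k G$, the first term becomes $(\partial_{a_i}-\partial_{a_k})(a_ia_kG)\big|_{a_i=0}=a_kG(0,a_k)-0$ (the $\partial_{a_k}$ term carries a factor $a_i$ and vanishes). The second term is $\bigl(\cZ'(a_i)/\cZ(a_i)-\cZ'(a_k)/\cZ(a_k)\bigr)a_ia_kG$; since $\cZ'(a_i)/\cZ(a_i)\sim a_i^{-1}$ near zero, the product $a_i\cdot\cZ'(a_i)/\cZ(a_i)$ tends to a positive constant (equal to $1$, from $\cZ(a)\sim a$), while $a_i\cdot\cZ'(a_k)/\cZ(a_k)\to0$; so the second term contributes $+a_kG(0,a_k)$ in the limit. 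Hence $\alpha(0,a_k)=2a_kG(0,a_k)\ge0$, which is the claim.

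The main obstacle is not in these final algebraic manipulations but in justifying that $\tilde G$ is genuinely smooth up to the boundary $a_i=0$ — i.e. that $\Omega$, built from $\tilde j_{1,2}\tilde u_{1,2}$, is smooth on $M^2$ including negative $r$, and that the parity in $r_i$ established via Remark \ref{rem:simmetry} survives integration against $\sigma$. This is precisely what Lemma \ref{lem:diff-global} and the discussion preceding \eqref{eq:23} supply: $\tilde u_{1,2}\in\cC^\infty(M^\Lambda,\bR)$ and $(\tilde j_{1,2}\tilde u_{1,2})(-r_1,-\xi_1,-\eta_1,r_2,\xi_2,\eta_2)=(\tilde j_{1,2}\tilde u_{1,2})(r_1,\xi_1,\eta_1,r_2,\xi_2,\eta_2)$; combined with the invariance of $\sigma$ under $(\xi,\eta)\mapsto(-\xi,-\eta)$ this makes $I$ even in each $r_i=\sqrt{a_i}$, so it descends to a smooth function of $a_i$. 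Once that regularity is in hand, the rest is the bookkeeping above. One should also double-check that $\oU'$ staying bounded away from $0$ and $\infty$ (our standing hypothesis on $U$) keeps the denominators in $\Omega$ harmless, so that no spurious singularity is introduced by the change of variables.
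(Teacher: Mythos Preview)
Your factorization step is wrong. In the representation \eqref{eq:23} the prefactor $a_1a_2/(\cZ(a_1)\cZ(a_2))$ does \emph{not} vanish as $a_i\to 0$: by \eqref{eq:34} one has $\cZ(a)\sim a$, so $a/\cZ(a)$ extends to a smooth \emph{nonvanishing} function on $[0,\infty)$. With your definition $G:=16\omega_4^2\,\tilde G\,(a_1/\cZ(a_1))(a_2/\cZ(a_2))$ one gets $G=\gamma^2$, not $\gamma^2=a_1a_2G$. The $a_1a_2$ appearing in the numerator of \eqref{eq:23} is exactly what is needed to make $\gamma^2$ smooth at the boundary (it cancels the linear vanishing of $\cZ(a_1)\cZ(a_2)$); it does not provide an \emph{additional} factor of $a_1a_2$.

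To obtain the factorization you must actually show $\gamma^2(0,a)=0$ (and, by symmetry, $\gamma^2(a,0)=0$); combined with the smoothness of $\gamma^2$ on $[0,\infty)^2$ that you correctly recalled, this yields $\gamma^2=a_1a_2G$ with $G$ smooth. The paper does this by a short direct computation: at $a_1=0$ one has $q_1=p_1=0$, so $\hat u(q,p):=u_{1,2}(0,0,q,p)$ solves the single-particle equation $L\hat u=\hat\j$ with $\hat\j=-\tfrac12\,p\cdot\nabla V(q)$. The key point is that $\hat\j$ is (up to a constant) $L^*$ applied to a function of $q$ alone, namely $V(q)$; hence
\[
\gamma^2(0,a)=-\mu_a(\hat\j\,\hat u)=\mu_a(L^*V\cdot\hat u)=\mu_a(V\,L\hat u)=\mu_a(V\hat\j)=0,
\]
the last equality by the $p\mapsto -p$ symmetry of $\mu_a$. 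Your computation of $\alpha(0,a_k)=2a_kG(0,a_k)\ge0$ from \eqref{eq:alpha-form} is correct once the factorization is established.
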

\begin{proof}
Observe that fixing the energy of the first particle $\mc E_1(q_1,p_1)
= 0$, it implies that $q_1=p_1=0$. So defining $\hat u(q,p) =
u_{1,2}(0,0,q,p)$, it solves on $\mathbb R^2$ the equation $L \hat u=
\hat \j$, where $L$ is the generator of the dynamics of a single
isolated atom and $\hat \j = - \frac 12 p\cdot \nabla V(q) = L^* V$. 
  
By the smoothness of  $\gamma^2$ it follows that
\[
\gamma^2(0,a)=-\mu_a(\hat \j \hat u)
=\mu_a(L^{*} V\cdot \hat u)=\mu_a(VL \hat u)=\mu_a(V\hat \j)=0
\]
due to the symmetry of $\mu_a$ with respect to the transformation
$p\to -p$. The structure of $\gamma^2$ follows then by the symmetry
and smoothness of $\gamma^2$. The positivity follows by 
\[
\gamma^2=-\mu_{a_1,a_2}(L_0u \cdot u)=\sigma^2\mu_{a_1,a_2}((X_1u)^2+(X_2u)^2).
\]
By \eqref{eq:alpha-form} and \eqref{eq:34}
\[
\alpha(a,0)= 2 aG(a,0) \ge 0.
\]
\end{proof}


\appendix
\section{Commutators}\label{sec:comm}
This appendix collects some formulae concerning commutators for the unperturbed system ($\ve=0$), we use the notation of Proposition \ref{lem:gap}.
In section \ref{sec:single-site-dynam} we have already computed:
\begin{equation}\label{eq:Com4}
[C_0,B] = \big(J p\cdot\partial_{q}+J \nabla U(q)\cdot\partial_{p}\big)
=:C_1.
\end{equation}
\begin{equation}\label{eq:Com5}
\begin{split}
[C_1,B] &=4 \oU'(q)Jq \cdot \partial_q - 4\{\oU''(q)|q|^2+\oU'\}  Jp \cdot \partial_p\\
&=:2C_2 + \zeta(q) C_0=:2C_2+R_2,
\end{split}
\end{equation}
An explicit computation shows that
\begin{equation}\label{eq:c2-bound}
C_2=\frac{p\cdot J \nabla U}{p^2}B+\frac{p\cdot\nabla U}{p^2}C_1-\frac{\|\nabla U\|^2}{p^2}C_0.
\end{equation}
The above formula shows that $C_2$, for small $p$ and large $q$, is not bounded by $B, C_0, C_1$ and, as we will see in the following, this forces us to compute
\[
\begin{split}
R_3 = [C_2,B] &= -\big(J \nabla U(q)\cdot D^2U(q) \partial_{p} + 
 J D^2U(q) p\cdot \partial_{q}\big)\\
 &=-\{4\oU''\langle p,q\rangle Jq+2\oU' Jp\}\partial_q-4(\oU')^2Jq\partial_p.
\end{split}
\]
By using the orthonormal base $Z_0, Z_1, Z_2$, defined in \eqref{eq:ort-base}, we have that 
\begin{equation*}
  \begin{split}
    R_3& = \sum_{j=0}^2 \langle R_3, Z_j \rangle Z_j \\
   & = \mathcal N^{-2}  \langle R_3 ,B \rangle B + \mathcal N^{-2}  \langle R_3,
    C_1 \rangle C_1 + \mathcal N^{-2}  \langle R_3 , (C_2 - C_0) \rangle (C_2 - C_0)\\
    &=4\cN^{-2}\oU''\langle q,p\rangle\langle Jp,q\rangle B -\cN^{-2}\left[4\oU''\langle q,p\rangle^2+2\oU'p^2+8(\oU')^3q^2\right]C_1\\
    &\quad - 8\cN^{-2}\oU''\oU'\langle q,p\rangle q^2(C_2 - C_0)\,,
  \end{split}
\end{equation*}
and an explicit calculation shows that
\begin{equation}\label{eq:r3-bound}
\|R_3\|^2=  \mathcal N^{-2} \left(| \langle R_3 , B \rangle|^2  + | \langle R_3 ,
    C_1 \rangle|^2 + | \langle R_3, (C_2 - C_0) \rangle|^2\right) \le K
\end{equation}

To conclude the {\em first order} analysis we need to compute some more commutators
\begin{equation}\label{eq:Com7}
\begin{split}
&([C_0,C_1])=-B\\
&([C_0,C_2])=0\\
&([C_1,C_2])=:\rho C_2-\beta B,
\end{split}
\end{equation}
with $\rho$ and $\beta$ also bounded.

We also need some \emph{second order} commutators:
\begin{equation}
  \label{eq:Com72}
  \begin{split}
    &[B,C_0^2]=C_0BC_0-C_0^2B-C_1C_0=-2C_1C_0+B\\
&[C_1,C_0^2]=C_0B+BC_0=2C_0B-C_1.
  \end{split}
\end{equation}

\section{Hypocoercivity estimates}\label{sec:hypoest}

This appendix contains the core of the hypocoercivity argument. For our purposes it turns out to be more convenient to set it in an abstract setting.

Let $\cH^0$ be an Hilbert space and $C_0, B$ be closed operators satisfying the relations \eqref{eq:Com4}--\eqref{eq:Com72}. Assume that $C_k, B, C_kC_0, BC_0$ have all a common core $D_c$. In addition, assume that for each $h,g\in D_c$ and $Z\in\{C_0,B\}$,
\begin{equation}\label{eq:antisimmetric}
\langle h, Z g\rangle=-\langle Zh, g\rangle,
\end{equation}
where $\langle\cdot,\cdot\rangle $ is the scalar product of $\cH^0$.
Remark that the various constant that will appear in the results ($\tau, K$
etc.) do not depend on the scalar product  $\langle h,
g\rangle$.

We are interested in obtaining estimates in terms of the following generalized Sobolev
norm: 
\[
\|h\|_{\scH^1}^2:=\|h\|^2+\sum_{k=0}^2\|C_kh\|^2 +\|Bh\|^2.
\]
To this end it turns out to be convenient to define the bilinear form:
\begin{equation}\label{eq:twoangles}
\begin{split}
\lv h,g\rv:=&\langle h,g\rangle+\sum_{k=0}^2a_k\langle C_kh, C_k
g\rangle +a_3\langle Bh, Bg\rangle \\
& -b_0\langle C_0h,C_1g\rangle-b_0\langle C_1h,C_0g\rangle
-b_1\langle C_1h,C_2g\rangle-b_1\langle C_2h,C_1g\rangle
\end{split}
\end{equation}
where $a_k>0,b_k>0$ will be chosen shortly (see (\ref{eq:pos2}) and
(\ref{eq:choice-param})). 

\begin{lem}\label{lem:positive}
If for $\delta\in (0,1)$
\begin{equation}
a_0b_1^2+a_2b_0^2 \leq a_0a_1a_2(1-\delta)^2, \label{eq:condpos}
\end{equation}
 then the quadratic form is positive definite and
\[
\lv h,h\rv\geq  (\|h\|^2+ \delta \sum_{k=0}^2 a_k \|C_kh\|^2
+ a_3 \|Bh\|^2)= \kappa \|h\|_{\scH^1}^2. 
\]
with $\kappa = \min\{\delta a_0, \delta a_1, \delta a_2, a_3, 1\}$.
\end{lem}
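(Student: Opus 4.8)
The plan is to prove positivity of the quadratic form $\lv h,h\rv$ by bounding the cross terms against the diagonal terms using Cauchy--Schwarz, and then collecting the resulting inequality. First I would write out
\[
\lv h,h\rv = \|h\|^2 + \sum_{k=0}^2 a_k\|C_kh\|^2 + a_3\|Bh\|^2 - 2b_0\,\mathrm{Re}\langle C_0h,C_1h\rangle - 2b_1\,\mathrm{Re}\langle C_1h,C_2h\rangle,
\]
using that the $b$-terms come in symmetric pairs. The two cross terms only involve the three norms $\|C_0h\|$, $\|C_1h\|$, $\|C_2h\|$, so the term $a_3\|Bh\|^2$ and the term $\|h\|^2$ are untouched and contribute in full; they will account for the $a_3$ and $1$ in the formula for $\kappa$.

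Next I would apply Cauchy--Schwarz with a free splitting parameter to each cross term: for suitable $s,t>0$,
\[
2b_0|\langle C_0h,C_1h\rangle| \le b_0\left(s\|C_0h\|^2 + s^{-1}\|C_1h\|^2\right),\qquad 2b_1|\langle C_1h,C_2h\rangle| \le b_1\left(t\|C_1h\|^2 + t^{-1}\|C_2h\|^2\right).
\]
The goal is to choose $s,t$ so that, after subtracting, the coefficients of $\|C_0h\|^2$, $\|C_1h\|^2$, $\|C_2h\|^2$ are each at least $\delta a_0$, $\delta a_1$, $\delta a_2$ respectively. This gives three inequalities: $a_0 - b_0 s \ge \delta a_0$, $a_1 - b_0 s^{-1} - b_1 t \ge \delta a_1$, and $a_2 - b_1 t^{-1} \ge \delta a_2$. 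From the first and third, the natural choice is $b_0 s = (1-\delta)a_0$ and $b_1 t^{-1} = (1-\delta)a_2$, i.e. $s = (1-\delta)a_0/b_0$ and $t = b_1/((1-\delta)a_2)$. Substituting into the middle inequality yields $a_1 - b_0^2/((1-\delta)a_0) - b_1^2/((1-\delta)a_2) \ge \delta a_1$, which rearranges exactly to $a_0 b_1^2 + a_2 b_0^2 \le a_0 a_1 a_2 (1-\delta)^2$, the hypothesis \eqref{eq:condpos}. Hence under \eqref{eq:condpos} all three coefficients are controlled, and collecting gives
\[
\lv h,h\rv \ge \|h\|^2 + \delta\sum_{k=0}^2 a_k\|C_kh\|^2 + a_3\|Bh\|^2 \ge \kappa\|h\|_{\scH^1}^2
\]
with $\kappa = \min\{\delta a_0,\delta a_1,\delta a_2,a_3,1\}$, and in particular the form is positive definite.

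There is no serious obstacle here; the only point requiring a little care is making the choice of the Cauchy--Schwarz weights $s,t$ so that the condition that falls out is precisely \eqref{eq:condpos} rather than something weaker, and checking that $s,t>0$ (which is automatic since $0<\delta<1$ and $a_0,a_2,b_0,b_1>0$). One should also note that the middle coefficient inequality only needs $b_0 s + b_1 t^{-1}$ to be split optimally, and the chosen values do exactly this — any other split would give a strictly worse constant, so this is the sharp condition. I would present the computation of the three coefficient inequalities and the algebraic rearrangement into \eqref{eq:condpos} as the substance of the proof, with the final collection step being immediate.
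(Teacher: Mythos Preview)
Your proof is correct and follows essentially the same route as the paper: apply Cauchy--Schwarz with free weights to the two cross terms, choose the weights to make the $\|C_0h\|^2$ and $\|C_2h\|^2$ coefficients exactly $\delta a_0$ and $\delta a_2$, and observe that the remaining middle inequality is precisely the hypothesis \eqref{eq:condpos}. The paper's argument differs only in notation (it uses $\alpha_0,\alpha_1$ in place of your $s,t$).
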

\begin{proof}
We have for any $\alpha_0, \alpha_1 >0$
\[
\begin{split}
\lv h,h\rv& - a_3 \|Bh\|^2- \|h\|^2\geq 
\sum_{k=0}^2a_k\|C_kh\|^2
- \sum_{k=0}^1 \left(b_k\alpha_k \|C_kh\|^2 + b_k
  \alpha_k^{-1}\|C_{k+1}h\|^2\right) \\ 
=& \left(a_0 - b_0 \alpha_0\right) \|C_0h\|^2
+ \left(a_1 -  b_0 \alpha_0^{-1} -  b_1\alpha_1^{-1}\right) \|C_1h\|^2
+ \left(a_2 - b_1 \alpha_1\right) \|C_2h\|^2\\
=& \delta \sum_{k=0}^2a_k\|C_kh\|^2 
+ \left(a_0(1-\delta) -  b_0 \alpha_0\right) \|C_0h\|^2
+ \left(a_2(1-\delta) -  b_1 \alpha_1\right) \|C_2h\|^2\\
&+ \left(a_1(1-\delta) -  b_0 \alpha_0^{-1} -  b_1
  \alpha_1^{-1}\right) \|C_1h\|^2
\end{split}
\]
Then choosing
\begin{equation*}
  \alpha_0 = \frac {a_0 (1-\delta)}{b_0}, \qquad 
   \alpha_1 = \frac {a_2 (1-\delta)}{b_1}
\end{equation*}
the Lemma follows immediately by condition (\ref{eq:condpos}).
\end{proof}

On the other hand Schwartz inequality implies that there exists $K>0$ such that
\begin{equation}\label{eq:norm-eq}
\lv h,h\rv\leq K (\|h\|^2+\sum_{k=0}^2\|C_kh\|^2 +\|Bh\|^2).
\end{equation} 

 Let $\cH^1:=\{h\in\cH^0\;:\; \lv h, h\rv<\infty\}$, clearly it is an Hilbert space, equivalent to $\scH^1$, with scalar product $\lv\cdot,\cdot\rv$.

 \begin{lem}
   \label{lem:hypo} Given $C_0, B$ as described at the beginning of the section, there exists $\tau>0$ such that, for each $\sigma\in(0,1)$, 
   \begin{equation}\label{eq:lower}
\lv h,Lh\rv\leq -\tau\sigma^2\left\{\sum_{k=0}^2\|C_kh\|^2+\sum_{k=0}^2\|C_kC_0h\|^2+\|Bh\|^2+
\|BC_0h\|^2\right\}.
\end{equation}
 \end{lem}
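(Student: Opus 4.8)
The plan is to establish the dissipation estimate \eqref{eq:lower} by computing $\lv h, Lh\rv$ term by term, using the structure $L = \sigma^2 C_0^2 + B$ together with the antisymmetry \eqref{eq:antisimmetric} of $C_0$ and $B$ and the commutator relations \eqref{eq:Com4}--\eqref{eq:Com72}. The guiding idea of hypocoercivity is that the ``main'' term $\sigma^2\langle C_0 h, C_0 C_0 h\rangle$-type contributions only provide dissipation in the $C_0$ directions, and one must borrow dissipation in the missing directions ($C_1$, $C_2$, $B$) from the cross terms $-b_0\langle C_0 h, C_1 g\rangle$ etc. in \eqref{eq:twoangles}, at the cost of error terms that are controlled by choosing the parameters $a_k, b_k$ in a suitable hierarchy (so that $a_3 \ll b_1 \ll b_0 \ll a_2 \ll a_1 \ll a_0 \ll 1$, or some such ordering adapted to the commutators).

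First I would expand $\lv h, Lh\rv$ using \eqref{eq:twoangles}. The diagonal term $\langle h, Lh\rangle$ gives $-\sigma^2\|C_0 h\|^2$ by antisymmetry. The terms $a_k\langle C_k h, C_k L h\rangle$ and $a_3\langle B h, B L h\rangle$ are handled by commuting $C_k$ (resp. $B$) past $L$: write $C_k L h = L C_k h + [C_k, L] h$, so that $\langle C_k h, C_k L h\rangle = \langle C_k h, L C_k h\rangle + \langle C_k h, [C_k, L] h\rangle$, and the first piece is again $-\sigma^2\|C_0 C_k h\|^2 \le 0$ (plus a harmless term from $[C_k, C_0^2]$ acting, controlled via \eqref{eq:Com72}). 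The commutators $[C_k, L] = \sigma^2[C_k, C_0^2] + [C_k, B]$ are exactly the quantities computed in Appendix \ref{sec:comm}: $[C_0, B] = C_1$, $[C_1, B] = 2C_2 + \zeta C_0$, $[C_2, B] = R_3$ with $\|R_3\|$ bounded by \eqref{eq:r3-bound}, and the second-order commutators \eqref{eq:Com72}. The key positive (negative-definite) contributions come from the mixed terms: $-b_0\langle C_0 h, C_1 L h\rangle - b_0\langle C_1 h, C_0 L h\rangle$, upon commuting and using $[C_0, B] = C_1$, produces a term $-2b_0\|C_1 h\|^2$ (up to lower-order), and similarly $-b_1\langle C_1 h, C_2 L h\rangle - b_1\langle C_2 h, C_1 L h\rangle$ produces, via $[C_1,B] = 2C_2 + \dots$, a term $-b_1\|C_2 h\|^2$ (again up to lower order). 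To recover dissipation in the $B$ direction one uses that once $\|C_0 h\|^2, \|C_1 h\|^2, \|C_2 h\|^2$ are controlled together with the $C_0 C_k$ terms, the Poincaré-type argument — or rather the fact that $B$ appears in $[C_1, B] = 2C_2 + \zeta C_0$ read backwards, expressing $B$-derivatives via $C_2, C_0$ and their $C_0$-derivatives — lets one bound $\|Bh\|^2$ and $\|B C_0 h\|^2$ by the already-controlled quantities; concretely one also extracts a genuine $-\|Bh\|^2$ contribution from the cross term involving $B$ through the relation $[C_0, C_1] = -B$ in \eqref{eq:Com7}. The dangerous error terms are precisely those where $C_2$ is not bounded by $B, C_0, C_1$ (as flagged after \eqref{eq:c2-bound}); these are absorbed by the uniform bound \eqref{eq:r3-bound} on $R_3$ and the choice of small parameters, together with Cauchy--Schwarz splittings $|\langle X h, Y h\rangle| \le \eta\|Xh\|^2 + \eta^{-1}\|Yh\|^2$ with $\eta$ chosen after the $a_k, b_k$.

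The main obstacle will be the bookkeeping of the error terms and the verification that the parameters can be chosen consistently: one needs \eqref{eq:condpos} for positivity of $\lv\cdot,\cdot\rv$ (Lemma \ref{lem:positive}) to hold simultaneously with the inequalities ensuring that every error term is dominated by the negative terms $-b_0\|C_1h\|^2$, $-b_1\|C_2h\|^2$, $-\sigma^2\|C_0h\|^2$, $-\sigma^2\sum_k\|C_0C_kh\|^2$ and the extracted $-\|Bh\|^2$, $-\|BC_0h\|^2$. Because several commutators carry unbounded coefficients (e.g. the $\zeta(q)C_0$ in $[C_1,B]$, and $C_2$ itself through \eqref{eq:c2-bound}), the order in which parameters are sent to zero matters: I would fix $\delta$, then choose $a_0$ small, then $a_1 \ll a_0$, $a_2 \ll a_1$, $b_0, b_1$ satisfying \eqref{eq:condpos}, and finally $a_3$ small enough and the Cauchy--Schwarz weights $\eta$ small enough, tracking carefully that the $\sigma^2$ from $L = \sigma^2 C_0^2 + B$ multiplies the good $C_0$-dissipation so that the final constant $\tau$ is $\sigma$-independent (which is why \eqref{eq:lower} has the form $-\tau\sigma^2\{\dots\}$ with $\tau$ absolute). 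Once all terms are collected with the right signs and the parameters pinned down, \eqref{eq:lower} follows, and combined with Lemma \ref{lem:positive} and the Poincaré inequality (Lemma \ref{lem:poinc}) it yields the spectral gap used in Proposition \ref{lem:gap}.
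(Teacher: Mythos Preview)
Your overall strategy---expand $\lv h,Lh\rv$, use the commutator identities, harvest $-\|C_1h\|^2$ and $-\|C_2h\|^2$ from the $b_0$ and $b_1$ cross terms, and absorb all error terms by Cauchy--Schwarz with a suitable parameter hierarchy---is the same as the paper's. But there is a genuine gap in your account of where the $-\|Bh\|^2$ dissipation comes from, and this error propagates into a wrong choice of parameters that would make the argument fail.

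You write that one extracts $-\|Bh\|^2$ ``from the cross term involving $B$ through the relation $[C_0,C_1]=-B$''. There is no cross term involving $B$ in \eqref{eq:twoangles}; the only $B$-term is the diagonal one $a_3\langle Bh,Bg\rangle$. The actual mechanism is this: the term $\sigma^2 a_3\langle BC_0^2h,Bh\rangle$, after commuting via $[B,C_0]=-C_1$ and $[C_0,C_1]=-B$, produces
\[
\sigma^2 a_3\bigl(-\|BC_0h\|^2-\|Bh\|^2+\|C_1h\|^2\bigr),
\]
while the companion term $\sigma^2 a_1\langle C_1C_0^2h,C_1h\rangle$ produces (by the same commutators)
\[
\sigma^2 a_1\bigl(-\|C_1C_0h\|^2+\|Bh\|^2-\|C_1h\|^2\bigr).
\]
So the net coefficient of $\|Bh\|^2$ is $-\sigma^2(a_3-a_1)$. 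This is negative only if $a_3>a_1$. Your proposed hierarchy, with ``$a_3$ small enough'' chosen last (or ``$a_3\ll b_1\ll\cdots\ll a_0$''), gives the wrong sign here and the estimate collapses: you would be left with a positive $\|Bh\|^2$ term that nothing else can absorb. The paper's choice is, in terms of a small parameter $\upsilon$,
\[
a_0=\sigma^2\upsilon^7,\quad b_0=\sigma^2\upsilon^{12},\quad a_3=\upsilon^{13},\quad a_1=\sigma^2\upsilon^{16},\quad b_1=\sigma^2\upsilon^{17},\quad a_2=\sigma^2\upsilon^{18},
\]
so that $a_3\gg a_1$ (note also that $a_3$ carries no $\sigma^2$). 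The same mechanism simultaneously generates a dangerous $+\sigma^2 a_3\|C_1h\|^2$ that must in turn be beaten by $b_0$, forcing $b_0\gg \sigma^2 a_3$; this is why the ordering is $a_0\gg b_0\gg a_3\gg a_1\gg b_1\gg a_2$ and not the descending chain you propose. A second minor point: the coefficient $\zeta$ in $R_2=\zeta C_0$ is bounded (so $\|R_2h\|\le K_2^{1/2}\|C_0h\|$), not unbounded as you suggest; the genuinely delicate error is $R_3=[C_2,B]$, controlled via \eqref{eq:r3-bound}.
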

\begin{proof}
This is a proof by boring computations. Let us start:
\[
\begin{split}
\lv h,Lh\rv=&\sigma^2\langle h, C_0^2 h\rangle +
\bigg\{ \sum_{k=0}^2 a_k[\sigma^2\underbrace{\langle
  C_kC_0^2h,C_kh\rangle}_{{\bf I_{A,k}}}+\underbrace{\langle
  C_kBh,C_kh\rangle}_{{\bf I_{B,k}}}]\\ 
&+\sigma^2 a_3\underbrace{\langle BC_0^2h,Bh\rangle}_{{\bf I_{B,B}}}
-\sum_{k=0}^1\sigma^2 b_k\underbrace{\left(\langle C_kh,C_{k+1}C_0^2h\rangle
+\langle C_kC_0^2h,C_{k+1}h\rangle\right)}_{{\bf II_{A,k}}}\\
&-\sum_{k=0}^1 b_k\underbrace{\left(\langle C_kh,C_{k+1}Bh\rangle+
\langle C_kBh,C_{k+1}h\rangle\right)}_{{\bf II_{B,k}}}\bigg\}.
\end{split}
\]
Now we must look at all the terms one by one, we will use systematically \eqref{eq:Com4}-- \eqref{eq:Com7}.
\[
\langle h, C_0^2 h\rangle=-\langle C_0 h, C_0 h\rangle.
\]
\[
\tag{$\bf I_{A,0}$}
\langle C_0^3h,C_0h\rangle=-\langle C_0^2h,C_0^2h\rangle.
\]

\begin{align}
 \langle C_1C_0^2h,C_1h\rangle&=\langle C_0C_1C_0h,C_1h\rangle+\langle BC_0h,C_1h\rangle\notag\\
&=-\langle C_1C_0h,C_0C_1h\rangle+\langle C_0 Bh,C_1h\rangle-\langle C_1h,C_1h\rangle   \tag{$\bf I_{A,1}$}\\
&=-\langle C_1C_0h,C_1C_0h\rangle+\langle C_1C_0h,Bh\rangle-\langle Bh,C_0C_1h\rangle\notag\\
&\quad-\langle C_1h,C_1h\rangle\notag\\
&=-\langle C_1C_0h,C_1C_0h\rangle+\langle Bh,Bh\rangle-\langle C_1h,C_1h\rangle.\notag
\end{align}

\[
\tag{$\bf I_{A,2}$}
\langle C_2C_0^2h,C_2h\rangle=-\langle C_2C_0h,C_2C_0h \rangle.
\]

\[
\tag{$\bf I_{B,0}$}
\langle C_0 Bh,C_0h\rangle=\langle C_1h,C_0h\rangle.
\]

\[
\tag{$\bf I_{B,1}$}
\langle C_1 Bh,C_1h\rangle=2\langle C_2h,C_1h\rangle+\langle R_2h,C_1h\rangle .
\]

\[
\tag{$\bf I_{B,2}$}
\langle C_2 Bh,C_2h\rangle=\langle R_3h,C_2h\rangle .
\]

\begin{align}
\langle BC_0^2h,Bh\rangle&=-\langle BC_0h,C_0Bh\rangle-\langle C_1C_0h,Bh\rangle\notag\\
&=-\langle BC_0h,BC_0h\rangle -\langle BC_0 h,C_1h\rangle-\langle C_1C_0h,Bh\rangle  \tag{$\bf I_{B,B}$}\\
&=-\langle BC_0 h,BC_0 h\rangle +\langle Bh,C_0C_1h\rangle +\langle C_1h,C_1h\rangle-\langle C_1C_0h,Bh\rangle \notag\\
&=-\langle BC_0 h,BC_0 h\rangle -\langle Bh,Bh\rangle +\langle C_1h,C_1h\rangle .\notag
\end{align}

\[
\tag{$\bf II_{A,0}$}
\langle C_0 C_0^2h,C_1h\rangle+\langle C_0h,C_1 C_0^2h\rangle=-2\langle C_0^2 h,C_1C_0h\rangle-\langle C_0h, C_1h\rangle .
\]

\[
\tag{$\bf II_{A,1}$}
\langle C_1 C_0^2h,C_2 h\rangle+\langle C_1h,C_2C_0^2h\rangle=-2\langle C_1 C_0h,C_2C_0h\rangle
-\langle C_1 h,C_2h\rangle .
\]

\[
\tag{$\bf II_{B,0}$}
\langle C_0 Bh,C_1h\rangle+\langle C_0h,C_1 Bh\rangle=\langle C_1 h,C_1h\rangle +2\langle C_0h,C_2h \rangle +\langle C_0 h,R_2h\rangle.
\]

\[
\tag{$\bf II_{B,1}$}
\langle C_1 Bh,C_2 h\rangle+\langle C_1h,C_2 Bh\rangle=2\langle C_2h,C_2 h\rangle+\langle R_2 h,C_2h\rangle +
\langle C_1 h,R_3h\rangle .
\]
Finally, we can put all the terms together, obtaining 
\begin{equation*}
  \begin{split}
    \lv h,Lh\rv = &-\sigma^2 \|C_0 h\|^2 
    - \left[b_0 +\sigma^2 (a_1- a_3)\right] \|C_1 h\|^2 
    - 2b_1 \|C_2 h\|^2\\
    &-\sigma^2 (a_3-a_1) \|B h\|^2
   + \left(a_0 + \sigma^2b_0\right)\langle C_0h, C_1h\rangle
   + a_1\langle R_2 h, C_1h\rangle\\
  & -b_0 \langle R_2 h, C_0h\rangle-b_1\langle R_2 h, C_2h\rangle+ (2a_1+ \sigma^2 b_1) \langle C_1h, C_2 h\rangle \\
   & -2b_0  \langle C_0h, C_2 h\rangle +a_2  \langle R_3h, C_2
   h\rangle - b_1  \langle R_3 h, C_1 h\rangle\\
   &- \underbrace{ \sigma^2
     \left( \sum_{k=0}^2 a_k \|C_kC_0h\|^2 + a_3\|BC_0h\|^2 
     - 2 \sum_{k=0}^1 b_k \langle C_k C_0h, C_{k+1} C_0h \rangle
   \right)}_{ \bf III}
  \end{split}
\end{equation*}
By the same argument used in the proof of Lemma \ref{lem:positive},
if for $0<\delta<1$ we have
\begin{equation}
a_0b_1^2+a_2b_0^2 < a_0a_1a_2(1-\delta)^2, 
\label{eq:pos2}
\end{equation}
 then {\bf III} is bounded by $-\sigma^2 \delta\left(\sum_k a_k\|C_k C_0 h\|^2 +a_3\|BC_0h\|^2\right)$.

Recalling \eqref{eq:Com5} and \eqref{eq:r3-bound}, we have $\|R_2h\|^2 \le K_2 \|C_0 h\|^2$,
 $\|R_3 h\|^2\leq K_3\{\|Bh\|^2+\|C_0h\|^2+\|C_1h\|^2+\|C_2h\|^2\}$.
Applying Schwarz, for each $\alpha_1,\alpha_2,\alpha_3, \alpha_4>0$, the first three lines of the above equation are bounded by
\begin{equation*}
  \begin{split}
    &-\left\{\sigma^2 - \frac 12\alpha_1[a_0+\sigma^2 b_0 + K_2 a_1] -K_2 (b_0+b_1)- \alpha_3 b_0 - \frac 12
        K_3 a_2 - \frac 12 b_1 K_3\alpha_4\right\}\|C_0h\|^2\\ 
   & -\bigg\{b_0 - \sigma^2 (a_3- a_1) - \frac {a_0+\sigma^2 b_0 +a_1}{2\alpha_1} -
       \frac {2a_1+ \sigma^2 b_1}{2\alpha_2} -  \frac {K_3a_2}2  -  \frac {(K_3\alpha_4^2 +1) b_1}{2\alpha_4}  \bigg\}
       \|C_1 h\|^2 \\ 
    &-\left\{ 2b_1 -\frac 12 b_1-  \frac 12\alpha_2(2a_1+ \sigma^2 b_1) - \alpha_3^{-1} b_0 -  \frac 12 (K_3 +
      1) a_2 -\frac 12 b_1 K_3\alpha_4 \right\} \|C_2 h\|^2 \\
    & - \left\{\sigma^2 (a_3 - a_1) -\frac 12 K_3 a_2 
     - \frac 12 K_3 b_1\alpha_4\right\} \|Bh\|^2 
  \end{split}
\end{equation*}
An inspection of the above expression shows that with the choices
\begin{equation}\label{eq:choice-param}
a_0=\sigma^2\upsilon^7\,;\; a_1=\sigma^2\upsilon^{16}\,;\; a_2=\sigma^2\upsilon^{18}\,;\;
a_3=\upsilon^{13}\,;\; b_0=\sigma^2 \upsilon^{12}\,;\; 
b_1= \sigma^2 \upsilon^{17}
\end{equation}
and
\begin{equation*}
  \alpha_1 = \upsilon^{-6}\,;\; \alpha_2 = \upsilon^2 \,;\; 
  \alpha_3 = \upsilon^{-5}\;;\; \alpha_4=\upsilon
\end{equation*}
implies that, by choosing $\upsilon$ small enough, there exist $\tau> 0$ such that, for each $\sigma \in(0, 1)$, 
\begin{equation}\label{eq:lower-gap}
\lv h,Lh\rv\leq -\tau\sigma^2\left\{\sum_{k=0}^2\|C_kh\|^2 +\|Bh\|^2
+\sum_{k=0}^2\|C_kC_0h\|^2 + \|BC_0h\|^2\right\}.
\end{equation}
Observe that (\ref{eq:pos2}) is satisfied by this choice.
\end{proof}

\section{A coordinate change}\label{sec:coordinate}

We study a change of coordinates in the case on one particle, for
many particles one simply considers the product. 

Let $S^3:=\{x\in\bR^4\;:\;\|x\|=1\}$ and $M=\bR\times S^3\subset \bR\times \bR^2\times\bR^2$, $M_+=(0,\infty)\times S^3\subset M$. Let $\Psi: \bR^4\setminus\{0\}\to M_+\subset \bR^5$ be defined by
\[
\Psi(q,p)=\left(\sqrt{\frac{p^2}2+U(q)}, 
\frac{q\sqrt{U(q)}}{|q|\sqrt{\frac{p^2}2+U(q)}},
\frac{p}{\sqrt{{p^2} + 2U(q)}}\right)=:(r,\xi,\eta)
\]
Remember that, by hypotheses, $U(q)=\oU(q^2)$ and $\oU$ is a strictly increasing function. We can then extend $\oU$ to a smooth increasing function  on $\bR$ such that $\oU(z)\geq 0$ if $z\geq 0$.\footnote{Clearly the extension is arbitrary, but this is irrelevant in the following.} It follows that $\rho(z):=\oU^{-1}(z)$ is a well defined smooth function on $\bR$ such that $\rho(0)=0$.

One can readily check that the inverse $\Psi^{-1}: M_+\to\bR^4$ is given by
\begin{equation}\label{eq:35}
\Psi^{-1}(r,\xi,\eta)=\left(\sqrt{\rho(r^2\xi^2)}
\frac{\xi}{\|\xi\|}, \sqrt {2}\,r\eta\right).
\end{equation}

For the following it is convenient to introduce the function $\theta(z)=\sqrt{\rho(z)/z}$, $z\neq 0$. Notice that $\theta$ is smooth on $\bR$ provided we set $\theta(0)=\sqrt{1/\oU'(0)}$.

Next we transport the vector fields on $M$ by the usual formula $\Psi_*Z=(D\Psi Z)\circ \Psi^{-1}$.
The following lemma follows by the computation of $D\Psi$ that can be found in appendix \ref{sec:micr-meas}:

\begin{lem}\label{lem:regularize}
With the above notations we have
\[
\begin{split}
&\tilde C_0:=\Psi_*C_0= J\eta\cdot\partial_{\eta} \\
&\tilde B:=\Psi_*B=\frac{\sqrt 2}{\theta(r^2\xi^2)}\left\{\eta- \xi\frac{\langle\xi,\eta\rangle}{\xi^2} \left[1- 
\oU'(\rho(r^2\xi^2))
\theta(r^2\xi^2)^2\right]\right\}\partial_\xi \\
&\quad\quad\quad\quad\quad\quad-\sqrt 2 \oU'(\rho(r^2\xi^2))\theta(r^2\xi^2)\xi\partial_{\eta}\\
&\tilde Y:=\Psi_* Y= \partial_r.
\end{split}
\]
\end{lem}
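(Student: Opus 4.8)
The plan is to compute the three pushforwards directly from $\Psi_*Z=(D\Psi\,Z)\circ\Psi^{-1}$. Writing $\Psi=(r,\xi,\eta)$ with $r=\sqrt{\cE^0}$, the $\partial_r$–, $\partial_\xi$– and $\partial_\eta$–components of $\Psi_*Z$ are respectively $(Zr)\circ\Psi^{-1}$, $(Z\xi)\circ\Psi^{-1}$ and $(Z\eta)\circ\Psi^{-1}$, so the whole lemma reduces to applying each of $C_0$, $B$ (see \eqref{eq:16}) and $Y$ (see \eqref{eq:auto}) to the scalar functions $\sqrt{\cE^0}$, $\xi(q,p)$, $\eta(q,p)$, and then re-expressing the results in the $(r,\xi,\eta)$ variables via \eqref{eq:35}. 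In that re-expression one systematically uses $\cE^0=r^2$, $U=r^2|\xi|^2$, $|q|^2=\rho(r^2\xi^2)$, $\sqrt{\rho(r^2\xi^2)}=r|\xi|\,\theta(r^2\xi^2)$ and $p=\sqrt2\,r\eta$ (all immediate from \eqref{eq:35} together with $\theta(z)=\sqrt{\rho(z)/z}$ and $\rho=\oU^{-1}$), while the explicit form of $D\Psi$ is the one recorded in Appendix~\ref{sec:micr-meas}.

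The case $C_0=Jp\cdot\partial_p$ is immediate: $C_0$ kills every function of $|p|^2$ and every function of $q$ alone, and both $\sqrt{\cE^0}$ and $\xi$ depend on $p$ only through $|p|^2$, so $C_0\sqrt{\cE^0}=C_0\xi=0$, whereas $C_0\eta=Jp/\sqrt{|p|^2+2U}=J\eta$. Hence $\tilde C_0=J\eta\cdot\partial_\eta$ (which one notes is indeed tangent to $S^3$, as any pushforward must be).

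The computation of $\tilde B$ is the core of the lemma. First, $B$ is the Hamiltonian field, so $B\cE^0=p\cdot\nabla U-\nabla U\cdot p=0$; this kills the $\partial_r$–component and, more usefully, lets $B$ pass through any function of $\cE^0$. For the $\partial_\eta$–component, $B\eta=(Bp)/\sqrt{2\cE^0}=-2\oU'(|q|^2)\,q/(\sqrt2\,r)$ (the denominator being $B$–invariant), and substituting $q=r\,\theta(r^2\xi^2)\,\xi$ gives exactly $-\sqrt2\,\oU'(\rho(r^2\xi^2))\,\theta(r^2\xi^2)\,\xi$. For the $\partial_\xi$–component write $\xi=(q/|q|)\sqrt{U/\cE^0}$, so $B\xi=\bigl(B(q/|q|)\bigr)\sqrt{U/\cE^0}+(q/|q|)\,B\sqrt{U/\cE^0}$; the first term equals $\tfrac1{|q|}\bigl(p-\tfrac{q\cdot p}{|q|^2}q\bigr)\sqrt{U/\cE^0}$, while the second is proportional to $q\,(q\cdot p)$ through $BU=p\cdot\nabla U=2\oU'(|q|^2)(q\cdot p)$. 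Converting to $(r,\xi,\eta)$, the $p$–perpendicular-to-$q$ part becomes $\sqrt2\,\eta/\theta(r^2\xi^2)$, while the two $q$–parallel contributions combine — via the algebraic identity $\oU'(\rho(r^2\xi^2))|q|^2-U=r^2|\xi|^2\bigl(\oU'(\rho(r^2\xi^2))\theta(r^2\xi^2)^2-1\bigr)$ — into $-\tfrac{\sqrt2}{\theta(r^2\xi^2)}\,\xi\,\tfrac{\langle\xi,\eta\rangle}{\xi^2}\bigl(1-\oU'(\rho(r^2\xi^2))\theta(r^2\xi^2)^2\bigr)$, which is precisely the claimed expression. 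I expect this collection-and-simplification to be the only genuinely laborious step; it is also the single place where the anharmonicity of $U$ surfaces, through the factor $1-\oU'(\rho)\theta^2$ (equivalently, the function $\kappa$ introduced after \eqref{eq:auto}), which vanishes exactly in the harmonic case.

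Finally, for $Y$ one computes $Yr$, $Y\xi$, $Y\eta$ using the identity $Y\cE^0=\tfrac12\sqrt{\cE^0}$ established just after \eqref{eq:auto}. Since $Y$ is radial in $q$ it annihilates the direction $q/|q|$; moreover a short check shows that the logarithmic $Y$–derivatives of $p^i$, of $\sqrt{U(q)}$ and of $\sqrt{\cE^0}$ all coincide, so $Y\xi=0$ and $Y\eta=0$. Thus $\Psi_*Y$ has only a radial component, which with the normalization adopted in \eqref{eq:auto} equals $\partial_r$. Collecting the three computations proves the lemma.
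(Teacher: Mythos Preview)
Your approach is correct and is exactly what the paper does: the paper's entire proof is the sentence ``follows by the computation of $D\Psi$ that can be found in appendix~\ref{sec:micr-meas}'', and you have simply carried out that computation component by component, applying each of $C_0$, $B$, $Y$ to the scalar functions $r,\xi,\eta$ and re-expressing via \eqref{eq:35}. Your identification of the $\partial_\xi$--collection as the only nontrivial step, and of $1-\oU'(\rho)\theta^2$ as the anharmonic correction, matches the paper's later observation $\Gamma(0)=0$; one small caution is the normalization in the $\tilde Y$ case, where the constant in front of $\partial_r$ depends sensitively on the prefactor in \eqref{eq:auto}, so it is worth writing out $Yr$ explicitly rather than just asserting it.
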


Define the function
\[
\Gamma(z):= 1-\oU'(\rho(z))\theta(z)^2
\]
and notice that it is continuous in $0$ and $\Gamma(0) = 0$. 
It is then natural to write
\begin{equation}\label{eq:btilde}
\begin{split}
&\tilde B = \frac{\sqrt 2}{\theta(r^2\xi^2)} \left\{\tilde B^0 +
\Gamma(r^2\xi^2)\tilde B^1 \right\}\\
&\tilde B^0=\eta\partial_\xi-\xi\partial_\eta\,,\quad \tilde B^1=\xi\partial_\eta - \frac{\langle\xi,\eta\rangle}{\xi^2}  \xi\partial_{\xi}
\end{split}
\end{equation}
Notice that $\theta(r^2\xi^2)$ and $\Gamma(r^2\xi^2)$ are smooth function of $\xi$ and $r$, consequently $\tilde B$ is a smooth vector field. In addition, both $\tilde B^0$ and $\tilde B^1$ are tangent to the surfaces $\{r\}\times S^3\subset M$.
\begin{rem} The vector fields $\tilde C_0, \tilde B,\tilde Y$ are defined only on $M_+$ but admit a smooth canonical extension to all $M$. We will use the same notation to designate such an extension.
\end{rem}

It follows that there exists smooth vector fields $\tilde C_1, \tilde C_2$ on $M$ such that $\tilde C_i=\Psi_*C_i$ on $M_+$.

\section{Microcanonical measure}
\label{sec:micr-meas}

We collect here some properties about the microcanonical measure 
$\mu_a$ (and its product version $\mu_{\underline a}$).

Let us first recall the definition and some formulas. Microcanonical measure is defined as the conditional measure on the energy shell 
$\Sigma_a = \{(q,p)\in \bR^4 : \|p\|^2/2 + U(q) = a \}$. This means that for every continuous functions $\phi:\bR_+ \to \bR$ and $f$ on $\bR^4$ we have
\begin{equation}
  \label{eq:30}
  \int \phi(\mathcal E_0(p,q)) f(p,q) dm_0 = 
  \int_{\bR_+} \phi(a)  \mu_a(f) e^{-a} \mathcal Z(a)  da
\end{equation}
Standard formulas give that 
\begin{equation}
  \label{eq:31}
  \mathcal Z(a) = \int_{\Sigma_a} \left( p^2 + U'(q)^2\right)^{-1/2}    
  d\sigma_{\Sigma_a}(p,q) 
\end{equation}
and
\begin{equation}
  \label{eq:32}
   \mu_a(f) =  \mathcal Z(a)^{-1} \int_{\Sigma_a} f(p,q) 
   \left( p^2 + U'(q)^2\right)^{-1/2} d\sigma_{\Sigma_a}(p,q) 
\end{equation}
where $\sigma_{\Sigma_a}$ is the Lebesgue measure on the energy shell.

We need an integration by part formula for the microcanonical measure $\mu_a$. This is provided by the following proposition. 

\begin{prop}\label{lem:int-part}
For all $f,g$ continuous locally bounded functions on $\bR^4$,
 such that $g$ does not depend on
$p_k$ ($\partial_{p_k}g\equiv 0$), and $f$ is differentiable in $p_k$, then 
\begin{equation}
\label{intparmic}
\mu_{\underline a}
(g\partial_{p_k}f)=(\partial_{a_k}\mu_{\underline a})(g \,p_kf)
+ \frac {\mathcal Z'(a_k)}{\mathcal Z(a_k)} 
\mu_{\underline a}(g \,p_kf)   ,
\qquad m_{0,\beta}\text{-a.s.},\; a_k>0. 
\end{equation}
\end{prop}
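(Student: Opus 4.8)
The statement is an integration-by-parts identity for the microcanonical measure, so the natural strategy is to lift the identity to the canonical (product Gibbs) measure $m_0$, where integration by parts is elementary, and then disintegrate over the energy shells using the coarea-type formula \eqref{eq:30}. First I would test the claimed identity against an arbitrary smooth test function $\varphi(\underline a)$ of the energies: multiply both sides of \eqref{intparmic} by $\varphi(\underline a)\, e^{-a_k}\mathcal Z(a_k)$ and integrate $da_k$ (all other energies being spectators, by the product structure it suffices to treat a single site, so I drop the index $k$ and work on $\bR^4$). By \eqref{eq:30} the left-hand side becomes $\int \varphi(\mathcal E_0)\, g\,\partial_p f\, dm_0$, which by the standard Gaussian integration by parts in the $p$ variable (recall $dm_0 = e^{-(p^2/2+U(q))}dp\,dq$ and $\partial_{p}g\equiv 0$) equals $\int \varphi(\mathcal E_0)\, g\, p\, f\, dm_0 + \int \varphi'(\mathcal E_0)(\partial_p\mathcal E_0)\, g\, p\, f\, dm_0 = \int \varphi(\mathcal E_0)\, g\, p\, f\, dm_0 + \int \varphi'(\mathcal E_0)\, g\, p^2\, f\, dm_0$, using $\partial_p\mathcal E_0 = p$.

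The second step is to rewrite each of these two canonical integrals back in microcanonical form and match them with the right-hand side of \eqref{intparmic} integrated against $\varphi$. For the first term, \eqref{eq:30} immediately gives $\int_{\bR_+}\varphi(a)\,\mu_a(gpf)\, e^{-a}\mathcal Z(a)\, da$. For the term carrying $\varphi'$, I would again invoke \eqref{eq:30} to write it as $\int_{\bR_+}\varphi'(a)\,\mu_a(gpf)\, e^{-a}\mathcal Z(a)\, da$ and then integrate by parts in $a$: this produces $-\int_{\bR_+}\varphi(a)\,\partial_a\!\big[\mu_a(gpf)\, e^{-a}\mathcal Z(a)\big]\, da$, provided the boundary terms at $a=0$ and $a=\infty$ vanish (decay at infinity is Gaussian; at $a=0$ one uses $\mathcal Z(a)\sim a$ together with local boundedness of $g,f$). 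Expanding the $a$-derivative, $\partial_a\big[\mu_a(gpf)\, e^{-a}\mathcal Z(a)\big] = \big[(\partial_a\mu_a)(gpf) - \mu_a(gpf) + \tfrac{\mathcal Z'(a)}{\mathcal Z(a)}\mu_a(gpf)\big]e^{-a}\mathcal Z(a)$. Collecting: the left side of \eqref{intparmic} tested against $\varphi$ equals $\int_{\bR_+}\varphi(a)\Big[\mu_a(gpf) - (\partial_a\mu_a)(gpf) + \mu_a(gpf) - \tfrac{\mathcal Z'(a)}{\mathcal Z(a)}\mu_a(gpf)\Big]e^{-a}\mathcal Z(a)\, da$. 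The $+\mu_a(gpf)$ and $-\mu_a(gpf)$ terms... here I must be careful with signs, and this bookkeeping is precisely where the proof has to be executed cleanly: the correct accounting should leave exactly $(\partial_a\mu_a)(gpf) + \tfrac{\mathcal Z'(a)}{\mathcal Z(a)}\mu_a(gpf)$ with the right sign, matching the RHS of \eqref{intparmic}. Since $\varphi$ is arbitrary, the identity follows $m_0$-a.e. in $a$, hence for a.e. $a>0$, which is the asserted conclusion.

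\textbf{Main obstacle.} The conceptual content is light — it is just Gaussian integration by parts plus a disintegration — so the real work, and the place where an error is easiest to make, is twofold: (i) justifying the two integrations by parts rigorously under only local boundedness and differentiability-in-$p$ of $f,g$, which requires a density/truncation argument and control of the behavior of $\mu_a(gpf)$ and its $a$-derivative near $a=0$ (the factor $\mathcal Z(a)\sim a$ saves the boundary term but one should check $\partial_a\mu_a(gpf)$ does not blow up faster than $a^{-1}$, or argue first for $f,g$ compactly supported away from the origin and then pass to the limit); and (ii) getting every sign and every term in the $\partial_a$-expansion correct so that the spurious $\mu_a(gpf)$ contributions cancel and one is left precisely with \eqref{intparmic}. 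An alternative, perhaps cleaner, route for step (i) is to use the explicit formulas \eqref{eq:31}--\eqref{eq:32} together with the coordinate change $\Psi$ of Appendix~\ref{sec:coordinate}, in which the energy shells are all identified with $S^3$ and $\partial_a$ becomes (a multiple of) $\partial_r$; in those coordinates the differentiation under the integral sign is transparent. I would present the weak (test-function) argument as the main line and remark that the coordinate-change picture makes the regularity in $a$ manifest.
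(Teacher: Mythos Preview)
Your approach is exactly the one the paper takes: reduce to a single site, test against an arbitrary smooth $\varphi$ of the energy, use \eqref{eq:30} to pass to the canonical measure $m_0$, integrate by parts in $p$, pass back to microcanonical form, and integrate by parts in $a$ on the $\varphi'$ term. There is one computational slip you should fix: the Gaussian integration by parts gives
\[
\int \varphi(\mathcal E_0)\, g\,\partial_p f\; dm_0
=\int \varphi(\mathcal E_0)\, g\, p\, f\; dm_0 \;-\;\int \varphi'(\mathcal E_0)\, g\, p\, f\; dm_0,
\]
i.e.\ a single factor of $p$ (from $\partial_p e^{-H_0}=-p\,e^{-H_0}$, or equivalently from $\partial_p\mathcal E_0=p$) and a \emph{minus} sign on the $\varphi'$ term; your display has both an extra $p$ and the wrong sign, which is why your later cancellation did not work out. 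With the correct sign the $\varphi'$ term, after integration by parts in $a$, contributes $+\int \varphi(a)\,\partial_a\!\big[e^{-a}\mathcal Z(a)\mu_a(gpf)\big]\,da$, and the $-\mu_a(gpf)$ coming from $\partial_a e^{-a}$ cancels the first term exactly, leaving $(\partial_a\mu_a)(gpf)+\tfrac{\mathcal Z'(a)}{\mathcal Z(a)}\mu_a(gpf)$ as required.
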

\begin{proof}
Because of the product structure of $\mu_{\underline {a}}$,
we can just consider only one site and we drop the index $k$ of it.

Let us recall that from the definition of $\mu_{a}$
follows, for each  smooth bounded functions $f,g$ and $\vf:\bR_+\to\bR$ supported away from zero,
\[
\int_{\bR^4}\; dp\; dq\;  e^{- H_0(p,q)}
\; \vf(H_0)\; f(p,q)\ =\ \int_{\bR_+}  \, \;da\; e^{-a} \vf(a) 
\mathcal Z(a) \mu_{a}(f)  
\]
Thus
\[
\begin{split}
\int_{\bR_+} \;da\; e^{-a}
\vf(a) \mathcal Z(a) \mu_{a}(g\partial_p f)=  
\int_{\bR^4}\; dp\; dq\;  e^{- H_0}
\vf(H_0) g(q) \partial_{p}f (p,q)\\ 
=\int_{\bR^4}\; dp\; dq\;  e^{- H_0} 
\left\{-\vf'(H_0) + \vf(H_0)\right\} g(q)
p f(p,q)\\  
=\int_{\bR_+}  \;da \; e^{-a} 
\left\{-\vf'(a)+ \vf(a)\right\} \mathcal Z(a) \mu_{a}(g pf) \\ 
=\int_{\bR_+}  \; da \; e^{-a} 
\vf(a) \partial_{a}\left( \mathcal Z(a) \mu_{a}(g p f)\right) \\
=\int_{\bR_+}  \; da \; e^{-a}  
\vf(a) \mathcal Z(a)\partial_{a}\left( \mu_{a}(g p f)\right)
+ \int_{\bR_+}  \; da \; e^{-a} 
\vf(a) \frac{\mathcal Z'(a)}{\mathcal Z(a)} \mathcal Z(a) 
\mu_{a}(g p f).
\end{split}
\]
It follows the relation \eqref{intparmic} for any bounded smooth $f,g$. The result follows by approximations.
\end{proof}

Formula (\ref{eq:32}) is difficult to be used directly, but exploiting the symmetry and the convexity of the potential, it is possible to write this microcanonical expectation as as integral on the 3 dimensional sphere of radius 1 with respect the corresponding uniform measure.
In fact the strict convexity of $U$ makes the energy shell very close to a sphere for small energy $a$, and $\mu_a$ close to the uniform measure on this sphere. We want to study this more precisely.

Recall the change of coordinates $\Psi$ introduced in Section
\ref{sec:regular}, by (\ref{eq:regularize}) and its inverse (\ref{eq:35}). 
Recall also the notation $\theta(z)=\sqrt{\rho(z)/z}$, and that $\theta(0) \to \sqrt{1/\oU'(0)}$.

\begin{lem}
  \begin{equation}
    \label{eq:34}
    \mathcal Z(a)= 4\omega_4 a\int_{S^3} \left[\oU'(\rho(a\xi^2))\right]^{-1}\,d\sigma(\xi,\eta) , 
  \end{equation}
where we have used polar coordinates in four dimensions, $\sigma$ is
the uniform probability measure on $S^3$, the unit four dimensional
ball, and $\omega_4$ is its volume. Furthermore
\begin{equation}\label{eq:canexpl}
\mu_a(f)= \frac{4\omega_4 a}{ \mathcal Z(a)}
\int_{S^3} f\circ
\Psi^{-1}(a,\xi,\eta) \left[\oU'(\rho(\sqrt{a}\xi^2))\right]^{-1}
\,d\sigma(\xi,\eta). 
\end{equation}
\end{lem}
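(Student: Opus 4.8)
The plan is to identify both formulas with the disintegration \eqref{eq:30} by computing explicitly the image of the reference measure $m_0$ under the polar-type decomposition of $\bR^4=\bR^2_q\times\bR^2_p$ that underlies the map $\Psi$. Since \eqref{eq:30} characterises the family $a\mapsto \mathcal Z(a)\mu_a(\cdot)$ uniquely (up to a null set of energies), it suffices to produce a candidate formula for $\int\vf(\mathcal E_0)f\,dm_0$ and match coefficients.

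First I would write $q=s\,\omega_q$ and $p=t\,\omega_p$ with $s=|q|\ge 0$, $t=|p|\ge 0$, $\omega_q,\omega_p\in S^1$ (arc length $d\omega_q,d\omega_q$ of total mass $2\pi$), so that $dq\,dp=s\,t\,ds\,dt\,d\omega_q\,d\omega_p$ and $\mathcal E_0=\tfrac12 t^2+\oU(s^2)$. Then I change variables $(s,t)\mapsto(u,v)$ with $u=\oU(s^2)$, $v=\tfrac12 t^2$; using $\rho=\oU^{-1}$ one gets $s\,ds=\dfrac{du}{2\,\oU'(\rho(u))}$ and $t\,dt=dv$, hence $s\,t\,ds\,dt=\dfrac{du\,dv}{2\,\oU'(\rho(u))}$, and finally the unit-Jacobian translation $(u,v)\mapsto(a,v)$, $a=u+v$, $v\in[0,a]$. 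Inserting this into $\int\vf(\mathcal E_0)f\,dm_0$ and comparing with \eqref{eq:30} gives, for every $a>0$,
\[
\mathcal Z(a)\,\mu_a(f)=\int_0^a\!\!\int_{S^1\times S^1} f\big(\sqrt{\rho(a-v)}\,\omega_q,\ \sqrt{2v}\,\omega_p\big)\,\frac{dv\,d\omega_q\,d\omega_q}{2\,\oU'(\rho(a-v))}.
\]

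Next I rewrite the right-hand side as an integral over $S^3$. Parametrising $S^3$ by $\xi=\cos\psi\,\omega_q$, $\eta=\sin\psi\,\omega_p$, $\psi\in[0,\pi/2]$, the uniform probability measure is $d\sigma=\tfrac1{2\pi^2}\cos\psi\sin\psi\,d\psi\,d\omega_q\,d\omega_q$, and $2\pi^2=4\omega_4$ is the surface area of $S^3$. Under this identification $v=a\sin^2\psi=a|\eta|^2$, $a-v=a\cos^2\psi=a|\xi|^2$, $dv=2a\sin\psi\cos\psi\,d\psi$, and by \eqref{eq:35} the point $(\sqrt{\rho(a-v)}\,\omega_q,\sqrt{2v}\,\omega_p)$ is exactly $\Psi^{-1}(\sqrt a,\xi,\eta)$. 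Substituting,
\[
\mathcal Z(a)\,\mu_a(f)=4\omega_4\,a\int_{S^3} f\circ\Psi^{-1}(\sqrt a,\xi,\eta)\,\big[\oU'(\rho(a\xi^2))\big]^{-1}\,d\sigma(\xi,\eta).
\]
Taking $f\equiv 1$ and using $\mu_a(1)=1$ gives \eqref{eq:34}, and dividing by $\mathcal Z(a)$ gives \eqref{eq:canexpl}. All integrals converge for $a>0$ because $\oU'\ge c^{-1}>0$ and $\rho$ is smooth with $\rho(0)=0$, and a standard density/continuity argument promotes the identity from test functions $\vf,f$ to the stated continuous $f$.

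I do not expect a genuine obstacle: the content is entirely a sequence of changes of variables, and the only points needing care are the bookkeeping of the three Jacobians, the identification of the constant $4\omega_4$ with the area $2\pi^2$ of $S^3$, and checking that the composite map $(a,\psi,\omega_q,\omega_p)\mapsto(q,p)$ agrees with $\Psi^{-1}(\sqrt a,\cdot)$ so that the weight $\oU'(\rho(a\xi^2))^{-1}$ appears with the correct argument (the apparent $\sqrt a$ in \eqref{eq:canexpl} being a typographic slip for $a$).
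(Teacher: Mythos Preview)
Your argument is correct and in fact cleaner than the paper's. Both proofs proceed by comparing an explicit pushforward of $dq\,dp$ with the disintegration \eqref{eq:30}, but the paper takes a more roundabout route: it introduces an auxiliary radial variable $s$ via the five-dimensional map $\tilde\Psi$ of \eqref{eq:regularizeplus}, computes the full $5\times 5$ Jacobian $\det(D\tilde\Psi\circ\tilde\Psi^{-1})$ (this is the content of Lemma~\ref{lem:28}), and then localises at $s=1$ by a delta-approximation in the extra variable. Your sequence of elementary changes of variables---polar in $q$ and $p$ separately, then $(s,t)\mapsto(\oU(s^2),t^2/2)\mapsto(a,v)$, then the join parametrisation $S^3\simeq S^1\ast S^1$---makes each Jacobian factor transparent and avoids the $5\times 5$ determinant altogether. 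The paper's route has the minor conceptual advantage of treating $\Psi$ as a single diffeomorphism whose Jacobian is computed once and for all, but your approach is shorter and requires no auxiliary construction. You also correctly identify the typographical inconsistencies in \eqref{eq:canexpl}: the first argument of $\Psi^{-1}$ should be $\sqrt a$ (or equivalently one should write $\tilde\Psi^{-1}(a,\cdot)$ as in the paper's \eqref{eq:compute}), and $\rho(\sqrt a\,\xi^2)$ should read $\rho(a\xi^2)$.
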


\begin{proof}
Instead of computing with differential forms it turns out to be more
efficient to use the following trick.  

Consider the change of variables $\tilde \Psi:(\bR^4\setminus
\{0\})\times \bR_+\to\bR^5$ defined by 
\begin{equation}\label{eq:regularizeplus}
\tilde\Psi(q,p, s)=\left(\frac{p^2}2+U(q), 
\frac{s\,q\sqrt{U(q)}}{|q|\sqrt{\frac{p^2}2+U(q)}},
\frac{s\,p}{\sqrt{{p^2} + 2U(q)}}\right)
\end{equation}
Note that, $\tilde \Psi$ is invertible and, setting $\varrho(\tilde \xi,\tilde \eta)=\sqrt{\tilde \xi^2+\tilde \eta^2}$,
\[
\tilde\Psi^{-1}(a,\tilde \xi,\tilde \eta)=\left(\sqrt{\rho(a\varrho^{-2}\tilde \xi^2)}
\frac{\tilde \xi}{\|\tilde \xi\|}, \varrho^{-1}\sqrt {2 a}\,\tilde \eta, \varrho\right).
\]
Then, given any two compact support functions $g\in\cC^0(\bR_+,\bR)$, $f\in\cC^0(\bR^4,\bR)$, we can write
\[
\begin{split}
\int_{\bR^5}& g(s)f(p,q) \;dq\,dp\, ds=\int_{\bR^5} \left[g f\left|\det(D\tilde \Psi)\right|^{-1}\right]\circ \tilde\Psi^{-1}(a,\tilde\xi,\tilde\eta) da\,d\tilde\xi\,d\tilde\eta\\
&=4\omega_4\int_{\bR^2\times S^3} g(s)\left[ f\left|\det(D\tilde \Psi)\right|^{-1}\right]\circ \tilde\Psi^{-1}(a,s\xi,s\eta)s^{3} da\,ds\,d\sigma(\xi,\eta)
\end{split}
\]

In Lemma \ref{lem:28} we compute the determinant of
\begin{equation}
  \label{eq:26}
D\tilde \Psi\circ\tilde\Psi^{-1}(a,\tilde \xi,\tilde \eta)= \begin{pmatrix}
\frac{\partial a}{\partial q} &\frac{\partial a}{\partial p} & \frac{\partial a}{\partial s}\\
\frac{\partial \tilde\xi}{\partial q}& \frac{\partial\tilde \xi}{\partial p} & \frac{\partial \tilde \xi}{\partial p}\\
\frac{\partial \tilde\eta}{\partial q}&\frac{\partial \tilde\eta}{\partial p}
& \frac{\partial \tilde \eta}{\partial p}
\end{pmatrix}\circ \tilde\Psi^{-1}(a,\tilde \xi,\tilde \eta)
\end{equation}
for $\tilde \xi^2+\tilde\eta^2=1$,
obtaining 
\begin{equation}
  \label{eq:28}
 \det\left( D\tilde \Psi\circ\tilde\Psi^{-1}\right) = \frac {\tilde U'(\rho(a\xi^2))}{a}.
 \end{equation}

 Thus, if we take a sequence of $g_n$ that converges weakly to the delta function on one, we have the formula
\[
\int_{\bR^4}f(p,q) \;dq\,dp=4\omega_4\int_{M_+} f\circ
\tilde\Psi^{-1}(a,\xi,\eta) \frac {a}{\tilde U'(\rho(a\xi^2))}\;
da\,d\sigma(\xi,\eta). 
\]
Accordingly, for each $g\in L^1(\bR_+)$ we can write
\[
\int_{\bR^4}f(p,q)g(\cE^0) \;dq\,dp=4\omega_4\int_{M_+} g(a)  f\circ
\tilde\Psi^{-1}(a,\xi,\eta) \frac {a}{\tilde U'(\rho(a\xi^2))}\;
da\,d\sigma(\xi,\eta) 
\]
On the other hand by the definition of the microcanonical measure $\mu_a$:
\[
\int_{\bR^4}f(p,q)g(\cE^0) \;dq\,dp=\int_{\bR_+}g(a) \mathcal Z(a)
\mu_a(f) da, 
\]
The above, by the arbitrariness of $g$, yields the following formula for the microcanonical expectation:
\begin{equation}\label{eq:compute}
\mu_a(f) = 4\omega_4 a \mathcal Z(a)^{-1}\int_{S^3} f\circ
\tilde\Psi^{-1}(a,\xi,\eta) \left[\tilde U'(\rho(a\xi^2))\right]^{-1}
\,d\sigma(\xi,\eta). 
\end{equation}

Putting $f=1$ in \eqref{eq:compute} implies the formula for $\mathcal
Z(a)$.
\end{proof}

\begin{lem}\label{lem:28}
  Proof of equation \eqref{eq:28}.
\end{lem}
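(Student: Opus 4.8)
The plan is to establish \eqref{eq:28} by a direct evaluation of the Jacobian determinant, organized so as to avoid a brute-force expansion of a full $5\times5$ matrix. First, observe that in the change-of-variables argument preceding the statement only the value of $\det D\tilde\Psi$ at points with $\tilde\xi^2+\tilde\eta^2=1$ is used, i.e. on the slice $\{s=1\}$; so I would compute $D\tilde\Psi$ there, where the base point $(q,p)=\tilde\Psi^{-1}(a,\xi,\eta)$ satisfies $U(q)=a\xi^2$, hence $|q|^2=\rho(a\xi^2)$ and $\oU'(|q|^2)=\oU'(\rho(a\xi^2))$. Second, I would use the $SO(2)$-equivariance of $\tilde\Psi$ (under $q\mapsto Rq$, $p\mapsto Rp$ one has $a\mapsto a$, $\tilde\xi\mapsto R\tilde\xi$, $\tilde\eta\mapsto R\tilde\eta$, and both the source and target linear maps have determinant $\det(R)^2=1$), which makes $\det D\tilde\Psi$ rotation invariant, so it suffices to evaluate it at a point with $q=(|q|,0)$; there $\nabla U(q)=2\oU'(|q|^2)q$ points along the first axis, which kills many of the entries.

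At such a point I would write down the twenty-five partial derivatives of $a=\tfrac12|p|^2+U(q)$, $\tilde\xi$ and $\tilde\eta$ with respect to $q,p,s$. The key structural observation is that the column of $D\tilde\Psi$ associated with $\partial_{q^2}$ has a single nonzero entry, namely $\partial_{q^2}\tilde\xi^2=\sqrt{U(q)}/(|q|\sqrt a)$ (all the other second-component derivatives vanish at $q=(|q|,0)$ because $q/|q|=(1,0)$). Expanding the determinant along this column reduces the computation to a $4\times4$ minor in the variables $q^1,p^1,p^2,s$. In this minor the first column carries the common factor $\oU'(|q|^2)|q|$ and the rows carry common factors $a^{-1/2}$ and $(2a)^{-1/2}$; after pulling these out one is left with an explicit $4\times4$ matrix whose entries are low-degree polynomials in $p$ and in $a,\,U(q)$. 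Subtracting appropriate multiples of the $\tilde\xi^1$-row from the two $\tilde\eta$-rows clears the last entry of those rows and, using the identity $2a=|p|^2+2U(q)$, turns the $2\times2$ block sitting in the $p$-columns into the identity; expanding along the last column then leaves a $3\times3$ determinant with an identity block, which collapses at once to the scalar $2+|p|^2/U(q)=2a/U(q)$. Reassembling all the scalar factors (including the sign of the cofactor expansion) yields $|\det D\tilde\Psi|=\oU'(|q|^2)/a=\oU'(\rho(a\xi^2))/a$, which is precisely \eqref{eq:28}; the sign that appears along the way is immaterial, since only the absolute value of the determinant enters the change-of-variables formula.

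I expect the only genuine obstacle to be bookkeeping: recording the twenty-five partial derivatives correctly and carrying the accumulated scalar factors ($\oU'(|q|^2)|q|$, the powers of $a$ and $2a$, the cofactor sign) through the reductions without slip. There is no conceptual difficulty here — the symmetry reduction and the sparsity of the $\partial_{q^2}$-column are exactly what make the calculation short. A convenient sanity check is the harmonic case $\oU(z)=\tfrac12 z$, in which $\oU'\equiv\tfrac12$ and the formula simply asserts $|\det D\tilde\Psi|=1/(2a)$; here $\tilde\Psi$ is (a rescaling of) the linear normalising map of the harmonic oscillator, and the determinant can be read off directly, confirming the result.
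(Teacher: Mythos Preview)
Your approach is correct and genuinely different from the paper's. The paper never specializes the base point: it writes $D\tilde\Psi\circ\tilde\Psi^{-1}$ as a $5\times5$ block matrix in the $(\xi,\eta)$ variables (with $2\times2$ blocks of the form $\Id$, $\xi\otimes\xi$, $\xi\otimes\eta$, $\eta\otimes\eta$), sets $\varrho=1$, and then performs a sequence of row operations---adding multiples of the first row to each of the others---to produce successive block-triangular reductions; the final step invokes the identity $\det(\Id-b\,v\otimes v)=1-b|v|^2$. Your route exploits the $SO(2)$-equivariance (which the paper does not use explicitly) to set $q=(|q|,0)$, making the $\partial_{q^2}$-column sparse and reducing to a $4\times4$ minor by cofactor expansion; from there the remaining reductions are purely scalar. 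The paper's block approach keeps the rank-one outer-product structure visible and avoids singling out coordinates, while yours trades that structure for a single sparse column and a direct scalar calculation. Both are short once set up; the main risk in yours, as you note, is the bookkeeping of scalar factors and the cofactor sign, but since only $|\det D\tilde\Psi|$ enters the change-of-variables formula, the sign is indeed irrelevant.
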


\begin{proof}
An explicit computation of the derivative yields 
\[
D\tilde \Psi\circ\tilde\Psi^{-1}=
\begin{pmatrix}
2\tilde U'(\rho(a\xi^2\varrho^{-2}))\theta(a\xi^2\varrho^{-2})\sqrt{a}\varrho^{-1} \xi&\varrho^{-1} \sqrt{2a}\eta& 0 \\ \\
\frac{\varrho}{\sqrt a\theta}
\left[\Id-\frac{\xi\otimes\xi}{\xi^2}\{1- \eta^2\varrho^{-2}\tilde U'(\rho)\theta^2\right]
& -\frac{1}{\sqrt{2a}\,\varrho}\xi\otimes \eta& \varrho^{-1}\xi^t\\ \\
-\frac{1}{\sqrt{a}\,\varrho}\theta\tilde U'(\rho) \eta\otimes \xi&
\frac{\varrho}{\sqrt{ 2a}}\left[\Id-\varrho^{-2}\eta\otimes \eta\right] &\varrho^{-1}\eta^t
\end{pmatrix}.
\]
We want to compute the determinant for $(\xi,\eta)\in S^3$, i.e. $\varrho=1$. If we multiply the first row by $(2a)^{-1}\xi_1$ and we sum it to the second, then by $(2a)^{-1}\xi_2$ and sum it to the third, by $(2a)^{-1}\eta_1$ and sum to the fourth and finally by $(2a)^{-1}\eta_2$ and sum it to the last row, we have
\[
\begin{split}
\det\left(D\tilde \Psi\circ\tilde\Psi^{-1}\right) &=
\det\begin{pmatrix}
2\tilde U'(\rho(a\xi^2))\theta(a\xi^2)\sqrt{a} \xi& \sqrt{2a}\eta& 0 \\ \\
\frac{1}{\sqrt a\theta}
\left[\Id-\frac{\xi\otimes\xi}{\xi^2}\{1- \tilde U'(\rho)\theta^2\right]
& 0& \xi^t\\ \\
0&\Id(2a)^{-\frac 12}&\eta^t
\end{pmatrix}\\ &\\
&=\det\begin{pmatrix}
2\tilde U'(\rho(a\xi^2))\theta(a\xi^2)\sqrt{a} \xi& 0& -2a\eta^2\\ \\
\frac{1}{\sqrt a\theta}
\left[\Id-\frac{\xi\otimes\xi}{\xi^2}\{1- \tilde U'(\rho)\theta^2\right]
& 0& \xi^t\\ \\
0&\Id(2a)^{-\frac 12}&\eta^t
\end{pmatrix}\\ &\\
&=\frac 1{2a}\det\begin{pmatrix}
2\tilde U'(\rho(a\xi^2))\theta(a\xi^2)\sqrt{a} \xi& -2a\eta^2\\ \\
\frac{1}{\sqrt a\theta}
\left[\Id-\frac{\xi\otimes\xi}{\xi^2}\{1- \tilde U'(\rho)\theta^2\right]
& \xi^t\end{pmatrix}\\ &\\
&=\frac 1{2a}\det\begin{pmatrix}
0& -2a\eta^2\\ \\
\frac{1}{\sqrt a\theta}
\left[\Id-\frac{\xi\otimes\xi}{\xi^2}\{1- \eta^{-2}\tilde U'(\rho)\theta^2\right]
& \xi^t\end{pmatrix}.
\end{split}
\]
From the above the Lemma follows.\footnote{Just remember that $\det(\Id-bv\otimes v)=1-bv^2$ since $v$ and any vector perpendicular to $v$ are eigenvectors of eigenvalue $1-bv^2$ and $1$ respectively.}
\end{proof}

\begin{cor}
  Let $f$ be a continuous function of $(q,p)$, then the following formula holds
  \begin{equation}
    \label{eq:25}
    \mu_a( f(q,p) ) = \int_{S^3} f\circ\Psi^{-1}(a,\xi,\eta) 
             \; d\sigma(\xi,\eta)  + \cO(a)
  \end{equation}
where $\sigma$ is the uniform probability measure on $S^3$, and
$\cO(a)$ is a smooth function of order $a$ as $a\to 0$.
\end{cor}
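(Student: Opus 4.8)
The plan is to start from the exact representation \eqref{eq:canexpl} of the microcanonical expectation over the unit sphere and to Taylor-expand the weight in the energy $a$. First I would combine \eqref{eq:canexpl} with \eqref{eq:34}: since $\cZ(a)=4\omega_4 a\int_{S^3}w_a\,d\sigma$ with $w_a(\xi):=[\oU'(\rho(a\xi^2))]^{-1}$ the weight appearing in \eqref{eq:34}, the prefactor $4\omega_4 a/\cZ(a)$ equals $(\int_{S^3}w_a\,d\sigma)^{-1}$, so
\begin{equation*}
\mu_a(f)=\frac{\int_{S^3}(f\circ\Psi^{-1})(a,\xi,\eta)\,w_a(\xi)\,d\sigma(\xi,\eta)}{\int_{S^3}w_a(\xi)\,d\sigma(\xi,\eta)}.
\end{equation*}

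Next I would expand $w_a$. Because $\oU\in\cC^\infty$ with $c^{-1}\le\oU'\le c$, the inverse $\rho=\oU^{-1}$ is smooth with $\rho(0)=0$, hence $h(z):=[\oU'(\rho(z))]^{-1}$ is smooth near $0$ with $h(0)=\oU'(0)^{-1}>0$; by Hadamard's lemma $h(z)=h(0)+z\,\tilde h(z)$ with $\tilde h$ smooth, so $w_a(\xi)=\oU'(0)^{-1}+a\,\xi^{2}\tilde h(a\xi^{2})$ uniformly for $\xi\in S^3$. Plugging this into numerator and denominator and cancelling the constant $\oU'(0)^{-1}$, the denominator becomes $1+a\,\oU'(0)\int_{S^3}\xi^2\tilde h(a\xi^2)\,d\sigma$, which (the integrand being smooth in $a$ on the compact $S^3$) is $1+\cO(a)$ and in particular stays bounded away from $0$ for $a$ small; and the numerator becomes $\int_{S^3}(f\circ\Psi^{-1})(a,\xi,\eta)\,d\sigma+a\,\oU'(0)\int_{S^3}(f\circ\Psi^{-1})(a,\xi,\eta)\,\xi^2\tilde h(a\xi^2)\,d\sigma$.

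Finally I would conclude. Since $\Psi^{-1}([0,a_0]\times S^3)$ is compact and $f$ is continuous, $f\circ\Psi^{-1}$ is bounded on it, so the second integral in the numerator is $\cO(a)$; expanding $(1+\cO(a))^{-1}=1+\cO(a)$ and collecting terms yields
\begin{equation*}
\mu_a(f)=\int_{S^3}(f\circ\Psi^{-1})(a,\xi,\eta)\,d\sigma(\xi,\eta)+a\,R(a),
\end{equation*}
with $R$ bounded on $[0,a_0]$ in terms of $\sup|f|$ on that compact set, hence $\cO(a)$; differentiating under the integral sign shows that $R$ is as regular in $a$ as $f\circ\Psi^{-1}$ is — in particular a smooth function of $a$ when $f$ is smooth, which is the sense in which the error term is ``smooth of order $a$''. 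I do not expect a genuine obstacle here: the only substantive inputs are the smoothness and strict positivity of $z\mapsto[\oU'(\rho(z))]^{-1}$, which is exactly where the hypothesis $c^{-1}\le\oU'\le c$ enters, plus the elementary verification that the denominator does not vanish for small $a$.
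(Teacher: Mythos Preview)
Your proof is correct and is exactly the argument the paper has in mind: the Corollary is stated in the paper without proof, as an immediate consequence of the explicit formulas \eqref{eq:34} and \eqref{eq:canexpl} derived in the preceding Lemma. You have filled in precisely the expected details --- writing $\mu_a(f)$ as a ratio of weighted sphere integrals, expanding the smooth weight $[\oU'(\rho(a\xi^2))]^{-1}$ about $a=0$ using $\rho(0)=0$, and reading off the $\cO(a)$ remainder --- and your remark that the ``smoothness'' of the error term really requires $f$ (hence $f\circ\Psi^{-1}$) to be smooth is a fair clarification of the paper's slightly loose statement.
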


Notice that because of (\ref{eq:compute}), since $0< c \le \tilde U'\le  c^{-1} < +\infty$, microcanonical measure is uniformly equivalent to the uniform measure on the unit sphere, and, for any $a> 0$, and any positive $f$:
\begin{equation}
  \label{eq:27}
  c^{-2} \int_{S^3} f\circ \tilde \Psi^{-1} (a,\xi,\eta)\; 
  d\sigma(\xi,\eta) \le \mu_a(f) 
  \le c^2 \int_{S^3} f\circ \tilde \Psi^{-1} (a,\xi,\eta) \; 
  d\sigma(\xi,\eta)
\end{equation}

\section{The Gaussian case}
\label{app:gauss}
Just to give a concrete idea of what we are doing and to provide some concrete intuition, here we discuss the case in which we have just two particles and both $U$ and $V$ are quadratic. The point being that such a  case can be solved explicitly and hence provides a guidance for what can be expected in the general case.

Let us consider and Hamiltonian system with four degree of freedom
$(q_1,p_1, q_2,p_2)\in \bR^8$ given by
\[
H_\ve:=\frac 12\{\|p_1\|^2+\|p_2\|^2\}+\frac
12\{\|q_1\|^2+\|q_2\|^2\}+\ve\|q_1-q_2\|^2 
\]
plus random forces that conserves the kinetic energy (that is
independent diffusions on the circles $\|p_i\|^2=cost$). To this end
consider the vector fields 
\[
X_i:=p_{i,1}\partial_{p_{i,2}}-p_{i,2}\partial_{p_{i,1}}.
\]
The generator is thus given by
\[
L_\ve:=\{H_\ve,\cdot\}+\sigma^2\sum_{i=1}^2X_i^2 = A_\ve + \sigma^2 S
\]
The energies of the two particles are $\mc E_i=\frac 12\|p_i\|^2+\frac
12\|q_i\|^2+\frac{\ve}2\|q_1-q_2\|^2$ and 
\[
\begin{split}
\partial_t \mc E_1&=\ve\langle q_2-q_1, p_1+p_2\rangle=:\ve j\\
\partial_t \mc E_2&=-\ve j
\end{split}
\]
gives the current.
A direct computation shows that, setting
\[
u:=\frac 12\{\|q_2\|^2-\|q_1\|^2\} - \sigma^{-2} q_2\cdot p_1 + \sigma^{-2}
q_1\cdot p_2 
\]
holds true
\begin{equation}\label{eq:poisson}
L_\ve u=j+\ve\sigma^{-2}\{\|q_1\|^2-\|q_2\|^2\}=:\tilde j.
\end{equation}
Accordingly, if we rescale the time by $\ve^{-2}$ and we look at the
random variables 
$\mc E_{i,\ve}(t):=\mc E_i(\ve^{-2}t)$.

\begin{equation}\label{calc1}
  \begin{split}
    \mc E_{1,\ve}(t) - \mc E_{1,\ve}(0) = \ve \int_0^{\ve^{-2} t} (L_\ve u)(s)
    ds - \ve^2\sigma^{-2} \int_0^{\ve^{-2}t} (\|q_1(s)\|^2 - \|q_2(s)\|^2) ds \\
    = \ve\left( u(\ve^{-2}t) - u(0) \right) + \ve M^u_{\ve^{-2}t} -
    \sigma^{-2} \int_0^{t} (\mc E_{1,\ve}(\tau) -\mc E_{2,\ve}(\tau)) d\tau \\
    - \ve^2\sigma^{-2} \int_0^{\ve^{-2}t} \big[ (\|q_1(s)\|^2-\|p_1(s)\|^2) -
    (\|q_2(s)\|^2 - \|p_2(s)\|^2)\big] ds +\cO(\ve^3)
  \end{split}
\end{equation}
where the martingale $M^u_t$ has quadratic variation given by
\begin{equation*}
  <M^u>_t = \sigma^2\int_0^t \left[ (X_1 u(s))^2 + (X_2 u (s))^2 \right] ds
\end{equation*}

We first show that the average of the last term on the RHS of
(\ref{calc1}) tends to  0. Observe that
\begin{equation*}
  \begin{split}
    (\|q_1\|^2-\|p_1\|^2)& - (\|q_2\|^2 - \|p_2\|^2) \\
    &= - L_\ve\left(p_1\cdot q_1 - p_2\cdot q_2 + \frac 12 (\|q_1\|^2 -
      \|q_2\|^2)\right) - \ve \left(\|q_1\|^2 - \|q_2\|^2\right)
  \end{split}
\end{equation*}
Calling $v = p_1\cdot q_1 - p_2\cdot q_2 + \frac 12 (\|q_1\|^2 -
\|q_2\|^2)$, the last term on the RHS of (\ref{calc1}) can be rewritten as
\begin{equation*}
  \ve^2\left( v(\ve^{-2} t) - v(0)\right) + \ve^2 M^v_{\ve^{-2}t} +
  \ve^3  \int_0^{\ve^{-2}t} (\|q_1(s)\|^2 - \|q_2(s)\|^2) ds
\end{equation*}
It is easy to show that the average goes to 0
as $\ve \to 0$.

It remains to compute the limit of the martingale $\ve
M^u_{\ve^{-2}t}$. To this purpose one has to compute the limit of
its quadratic variation
\begin{equation*}
   <\ve M^u>_{\ve^{-2}t} = \ve^2\sigma^{-2} \int_0^{\ve^{-2}t} \left[
     (q_{2,1}p_{1,2} - q_{2,2}p_{1,1})^2 +
     (q_{1,2}p_{2,1} - q_{1,1}q_{2,2})^2 \right] ds 
\end{equation*}
After some explicit calculation, one can find 4th order polynomials
$\psi,\phi, \chi$ such that 
\begin{equation*}
  (q_{2,1}p_{1,2} - q_{2,2}p_{1,1})^2 +
     (q_{1,2}p_{2,1} - q_{1,1}q_{2,2})^2 = 2 \mc E_1 \mc E_2 + L_\ve \psi + S
     \phi + \ve \chi
\end{equation*}
The term $L_\ve \psi +  \ve \chi$ does not contribute. For the term $S\phi$, by using Schwartz
inequality:
\begin{equation*}
  \begin{split}
    - \int_0^t d\tau \int S&\phi F_\tau d\mu_\ve= 
    \int_0^t d\tau \sum_{i=1,2}\int (X_i\phi) (X_iF_\tau) d\mu_\ve \\ 
   & \le \left(\int_0^t \mathcal D(F_\tau)  d\tau\right)^{1/2} 
    \left(\int_0^t \sum_{i=1,2}\int (X_i\phi)^2 F_\tau d\mu_\ve
      d\tau\right)^{1/2}\\
      &\leq   \left(\int_0^t \mathcal D(F_\tau)  d\tau\right)^{1/2} \left(\int_0^t \sum_{i=1,2}\|F\|_L^2\left[\int (X_i\phi)^4 d\mu_\ve\right]^{\frac 12}
      d\tau\right)^{1/2} \le C \sqrt t
  \end{split}
\end{equation*}

We have obtained that $\mc E_{1,\ve}, \mc E_{2,\ve}$ converge to the
(degenerate) diffusion on $R_+^2$ generated by
\begin{equation}
  \label{eq:1bis}
  \mathcal L = 2\sigma^{-2}(\partial_{\mc E_1} - \partial_{\mc E_2}) \mc E_1 \mc E_2
  (\partial_{\mc E_1} - \partial_{\mc E_2}) 
\end{equation}
corresponding to the stochastic differential equation:
\begin{equation}
  \label{eq:2bis}
  \begin{split}
    d\mc E_1 = \sigma^{-1}\sqrt{2 \mc E_1 \mc E_2}\; dw_t - 2\sigma^{-2}(\mc E_1 - \mc E_2)\; dt = -d\mc E_2
  \end{split}
\end{equation}
with $w_t$ standard Wiener process.

\end{document}